\def\R{\textrm{I\kern-0.21emR}}
\def\N{\textrm{I\kern-0.21emN}}
\def\Z{\mathbb{Z}}
\newcommand{\C} {\mathbb{C}}
\renewcommand{\geq}{\geqslant}
\renewcommand{\leq}{\leqslant}
\newtheorem{theorem}{Theorem}
\newtheorem*{thmnonumbering}{Theorem}
\newtheorem{proposition}{Proposition}
\newtheorem{lemma}{Lemma}
\theoremstyle{definition}
\theoremstyle{definition}\newtheorem{remark}{Remark}
\newcommand{\Hun}{\mathbf{(H_1)}}
\newcommand{\Hdeux}{\mathbf{(H_2)}}
\newcommand{\Htrois}{\mathbf{(H_3)}}
\newcommand{\Real}{\mathrm{Re}}
\title{Optimal shape and location of sensors for parabolic equations with random initial data}
\author{Yannick Privat\footnote{CNRS, Sorbonne Universit\'es, UPMC Univ Paris 06, UMR 7598, Laboratoire Jacques-Louis Lions, F-75005, Paris, France ({\tt yannick.privat@upmc.fr}).}
	\and Emmanuel Tr\'elat\footnote{Sorbonne Universit\'es, UPMC Univ Paris 06, CNRS UMR 7598, Laboratoire Jacques-Louis Lions, Institut Universitaire de France, F-75005, Paris, France (\texttt{emmanuel.trelat@upmc.fr}).} 
        \and Enrique Zuazua\footnote{BCAM - Basque Center for Applied Mathematics, Mazarredo, 14 E-48009 Bilbao-Basque Country-Spain}\ \footnote{Ikerbasque, Basque Foundation for Science, Alameda Urquijo 36-5, Plaza Bizkaia, 48011, Bilbao-Basque Country-Spain (\texttt{zuazua@bcamath.org}).}}
\date{}
\begin{document}

\maketitle

\begin{abstract}
In this article, we consider parabolic equations on a bounded open connected subset $\Omega$ of $\R^n$.
We model and investigate the problem of optimal shape and location of the observation domain having a prescribed measure. 
This problem is motivated by the question of knowing how to shape and place sensors in some domain in order to maximize the quality of the observation: for instance, what is the optimal location and shape of a thermometer?

We show that it is relevant to consider a spectral optimal design problem corresponding to an average of the classical observability inequality over random initial data, where the unknown ranges over the set of all possible measurable subsets of $\Omega$ of fixed measure.
We prove that, under appropriate sufficient spectral assumptions, this optimal design problem has a unique solution, depending only on a finite number of modes, and that the optimal domain is semi-analytic and thus has a finite number of connected components. This result is in strong contrast with hyperbolic conservative equations (wave and Schr\"odinger) studied in \cite{PTZobsND} for which relaxation does occur.

We also provide examples of applications to anomalous diffusion or to the Stokes equations. In the case where the underlying operator is any positive (possible fractional)  power of the negative of the Dirichlet-Laplacian, we show that, surprisingly enough, the complexity of the optimal domain may strongly depend on both the geometry of the domain and on the positive power.

The results are illustrated with several numerical simulations.

\end{abstract}

\noindent\textbf{Keywords:} parabolic equations, optimal design, observability, minimax theorem.

\medskip

\noindent\textbf{AMS classification:} 93B07, 35L05, 49K20, 42B37.

\newpage
\tableofcontents

\section{Introduction}\label{secintro}
Given a bounded domain $\Omega$ of $\R^n$, in this paper we model and solve the problem of finding an optimal observation domain $\omega\subset\Omega$ for general parabolic equations settled on $\Omega$. 
We want to optimize not only the placement but also the shape of $\omega$, over all possible measurable subsets of $\Omega$ having a certain prescribed measure.
Such questions are frequently encountered in engineering applications but have been little treated from the mathematical point of view.
Our objective is here to provide a rigorous mathematical model and setting in which these questions can be addressed.
Our results will be established in a general parabolic framework and cover the cases of heat equations, anomalous diffusion equations or Stokes equations. For instance for the heat equation we will answer to the following question (that we will make more precise later on):
\begin{quote}
What is the optimal shape and location of a thermometer?
\end{quote}


\paragraph{Brief state of the art.}
Due to their relevance in engineering applications, optimal design problems for the placement of sensors for processes modeled by partial differential equations have been investigated in a large number of papers. Let us mention for instance the importance of the shape and placement of sensors for transport-reaction processes (see \cite{antoniades,demetriou-transport}).
Several difficulties overlap for such problems. On the one hand, the parabolic partial differential equations under consideration constitute infinite-dimensional dynamical systems, and, consequently, solutions live in infinite-dimensional spaces. On the other hand, the class of admissible designs is not closed for the standard and natural topology.
Few works take into consideration both aspects. Indeed, in many contributions, numerical tools are developed to solve a simplified version of the optimal design problem where either the partial differential equation has been replaced with a discrete approximation, or the class of optimal designs is replaced with a compact finite dimensional set (see for example \cite{armaoua,harris,vandewouwer} and \cite{morris} where such problems are investigated in a more general setting). 
In other words, in most of these applications the method consists in approximating appropriately the problem by selecting a finite number of possible optimal candidates and of recasting the problem as a finite-dimensional combinatorial optimization problem. In many studies the sensors have a prescribed shape (for instance, balls with a prescribed radius) and then the problem consists of placing optimally a finite number of points (the centers of the balls) and thus it is finite-dimensional, since the class of optimal designs is replaced with a compact finite-dimensional set.
Of course, the resulting optimization problem is already challenging. We stress however that, in the present paper, 
we want to optimize also the shape of the observation set, and  we do not make any a priori restrictive assumption to compactly the class of shapes ( $\omega$ to be of bounded variation, for instance) and the search is made over all possible measurable subsets.

From the mathematical point of view, the issue of studying a relaxed version of optimal design problems for the shape and position of sensors or actuators has been investigated in a series of articles. In \cite{munchHeat}, the authors study a homogenized version of the optimal location of controllers for the heat equation problem (for fixed initial data), noticing that such problems are often ill-posed. In \cite{allaireMunch}, the authors consider a similar problem and study the asymptotic behavior as the final time $T$ goes to infinity of the solutions of the relaxed problem; they prove that optimal designs converge to an optimal relaxed design of the corresponding two-phase optimization problem for the stationary heat equation. We also mention \cite{munchPedr} where, for fixed initial data, numerical investigations are used to provide evidence that the optimal location of null-controllers of the heat equation problem is an ill-posed problem.
In \cite{PTZobspb1} we proved that, for fixed initial data as well, the problem of optimal shape and location of sensors is always well posed for heat, wave or Schr\"odinger equations (in the sense that no relaxation phenomenon occurs); we showed that the complexity of the optimal set depends on the regularity of the initial data, and in particular we proved that, even for smooth initial data, the optimal set may be of fractal type (and there is no relaxation).

A huge difference between these works and the problem addressed in this paper is that all criteria introduced in the sequel take into consideration all possible initial data. Moreover, the optimization will range over all possible measurable subsets having a given measure.
This the idea developed in \cite{PTZ_HUM1D,PTZObs1,PTZobsND}, where the problem of the optimal location of an observation subset $\omega$ among all possible subsets of a given measure or volume fraction of $\Omega$ was addressed and solved for conservative wave and Schr\"odinger equations. A relevant spectral criterion was introduced, viewed as a measure of eigenfunction concentration, in order to design an optimal observation or control set in an uniform way, independent of the data and solutions under consideration.
Such a kind of uniform criterion was earlier introduced for the one-dimensional wave equation in \cite{henrot_hebrardSCL,henrot_hebrardSICON} to investigate optimal stabilization issues.

The main difference of the previous analyses of conservative wave-like problems with respect to the present one is that, here, due to strong dissipativity of the heat equation (or of more general parabolic equations), high-frequency components are penalized in the spectral criterion, thus making optimal shapes to be determined by the low frequencies only, which, in particular, avoids spillover phenomena to occur.

\paragraph{Overview of the results of this paper.}
Let us now provide a short overview of the results of the present paper, without introducing (at this step) the whole general parabolic framework in which our results are actually valid.

\medskip

Let $\Omega$ be an open bounded connected subset of $\R^n$.
Let $T$ be a fixed (arbitrary) positive real number.
To start with a simple model, let us consider the heat equation
\begin{equation}\label{heatEq_intro}
\partial_t y-\triangle y=0, \quad (t,x)\in (0,T)\times\Omega,
\end{equation}
with Dirichlet boundary conditions.
For any measurable subset $\omega$ of $\Omega$, we observe the solutions of \eqref{heatEq_intro} restricted to $\omega$ over the horizon of time $[0,T]$, that is, we consider the observable $z(t,x)=\chi_\omega(x)y(t,x)$, where $\chi_\omega$ denotes the characteristic function of $\omega$. The subset $\omega$ models sensors, and a natural question is to determine what is the best possible shape and placement of the sensors in order to maximize the observability in some appropriate sense, for instance in order to maximize the quality of the reconstruction of solutions. In other words, we ask the question of determining what is the best shape and placement of a thermometer in $\Omega$.

At this stage, a first challenge is to settle the problem properly, to make it both mathematically meaningful and relevant in view of practical issues.

Throughout the paper, we fix a real number $L\in(0,1)$, and we will work in a class of domains $\omega$ such that $|\omega|=L|\Omega|$. In other words the set of unknowns is
\begin{equation*}
\mathcal{U}_L = \{ \chi_\omega \in L^\infty(\Omega;\{0,1\}) \ \vert\ \omega\ \textrm{is a measurable subset of}\ \Omega \ \textrm{of Lebesgue measure}\ \vert\omega\vert=L\vert\Omega\vert\}.
\end{equation*}
This is done to model the fact that the quantity of sensors to be employed is limited and, hence, that we cannot measure the solution over $\Omega$ in its whole.

We stress again that we do not make any restriction on the regularity or shape of the subsets $\omega$. We are trying to determine whether or not there exists an "absolute" optimal observation domain.
We will see that such a domain exists in the parabolic case under slight assumptions on the operator and on the domain $\Omega$ (in contrast to the case of hyperbolic equations studied in \cite{PTZobsND}).

\medskip

Let us now define the observability problem under consideration.

Recall that, for a given measurable subset $\omega$ of $\Omega$, the heat equation \eqref{heatEq_intro} is said to be \textit{observable} on $\omega$ in time $T$ whenever there exists $C>0$ such that
\begin{equation}\label{ineqobs_intro}
C \int_\Omega y(T,x)^2\, dx
\leq \int_0^T\int_\omega y(t,x)^2 \,dx \, dt,
\end{equation}
for every solution of \eqref{heatEq_intro} such that $y(0,\cdot)\in \mathcal{D}(\Omega)$ (the set of functions defined on $\Omega$, that are smooth and of compact support). It is well known that, if $\Omega$ is $C^2$, then this observability inequality holds true (see \cite{EFC,FursikovImanuvilov,LebeauRobbiano,TucsnakWeiss}). Note that this result has been recently extended in \cite{AEWZ} to the case where $\Omega$ is bounded Lipschitz and locally star-shaped.

The observability constant $C_T(\chi_\omega)$ is defined as the largest possible constant $C>0$ such that \eqref{ineqobs_intro} holds. 
This constant gives an account for the well-posedness of the inverse problem of reconstructing the solutions from measurements over $[0,T]\times\omega$ (see, e.g., the textbook \cite{Chou} for such inverse problems). Of course, the larger the constant $C_T(\chi_\omega)$ is, the more stable  the inverse problem will be.

 Hence it is  natural to model the problem of best observation for the heat equation \eqref{heatEq_intro} as the problem of maximizing the functional $C_T(\chi_\omega)$ over the set $\mathcal{U}_L$, that is,
\begin{equation}\label{supCT_intro}
\sup_{\chi_\omega\in\mathcal{U}_L} C_T(\chi_\omega).
\end{equation}
Such a problem is however very difficult due to the presence of crossed terms at the right-hand side of \eqref{ineqobs_intro} when considering spectral expansions (see Section \ref{sec2.1} for details). On the other hand, actually, the observability constant $C_T(\chi_\omega)$ 
is (by nature) pessimistic in the sense that it corresponds to a worst possible case, and in practice it is expected that the worst case will not occur very often. 
In practice, to reconstruct solutions one is often led to achieve a large number of measurements, and in the problem of finding a best observation domain it is reasonable to design a set that will optimize the observability only in average. 

In view of that, we define an \textit{averaged version} of the observability inequality, where the average runs over \textit{random initial data}. This procedure, described in detail in Section \ref{sec2.1}, consists of randomizing the Fourier coefficients of the initial data. To explain it with few words, let us fix an orthonormal Hilbert basis $(\phi_j)_{j\in\N^*}$ of $L^2(\Omega)$ consisting of eigenfunctions of the (negative of) Dirichlet-Laplacian associated with the positive eigenvalues $(\lambda_j)_{j\in\N^*}$, with $\lambda_1\leq\cdots\leq \lambda_j\rightarrow+\infty$.
Every solution of \eqref{heatEq_intro} can be expanded as
$$y(t,x)=\sum_{j=1}^{+\infty}a_je^{-\lambda_jt}\phi_j (x),$$
We randomize the solutions (actually, their initial data) by considering
$$y_\nu(t,x)=\sum_{j=1}^{+\infty}\beta_j^\nu  a_j e^{-\lambda_jt}\phi_j(x),$$
for every event $\nu\in\mathcal{X}$, where $(\beta_{j}^\nu)_{j\in\N^*}$ is a sequence of independent real random variables on a probability space $(\mathcal{X},\mathcal{A},\mathbb{P})$ having mean equal to $0$, variance equal to $1$, and a super exponential decay (for instance, Bernoulli laws).
The randomized version of the observability inequality \eqref{ineqobs_intro} is then defined as
$$C_{T,\textrm{rand}}(\chi_\omega) \int_\Omega y(T,x)\, dx \ \leq\ \mathbb{E} \int_0^T\int_\omega y_\nu(t,x)^2 \,dx\,  dt ,$$
where the expectation $\mathbb{E}$ ranges over the space $\mathcal{X}$ with respect to the probability measure $\mathbb{P}$. Here, $C_{T,\textrm{rand}}(\chi_\omega)$ is defined as the largest possible constant such that this randomized observability inequality holds, and is called \textit{randomized observability constant}.
It is easy to establish that
\begin{equation}\label{CTrand_intro}
C_{T,\textnormal{rand}}(\chi_\omega)= \inf_{j\in\N^*} \frac{e^{2\lambda_jT}-1}{2\lambda_j} \int_\omega \phi_j(x)^2 \, dx,
\end{equation}
for every measurable subset $\omega$ of $\Omega$. Moreover, note that $0\leq C_{T,\textnormal{rand}}(\chi_\omega)\leq C_T(\chi_\omega)$ (and the second inequality may be strict, as we will see further).

\medskip

Following the previous discussion, instead of considering as a criterion the deterministic observability constant $C_T(\chi_\omega)$ (and then, the problem \eqref{supCT_intro}), we find more relevant to model the problem of best observation domain as the problem of maximizing the functional $C_{T,\textrm{rand}}(\chi_\omega)$ over the set $\mathcal{U}_L$, that is the problem
\begin{equation}\label{pb_intro}
\sup_{\chi_\omega\in\mathcal{U}_L} C_{T,\textnormal{rand}}(\chi_\omega)
= \sup_{\chi_\omega\in\mathcal{U}_L} \inf_{j\in\N^*} \frac{e^{2\lambda_jT}-1}{2\lambda_j} \int_\omega \phi_j(x)^2 \, dx .
\end{equation}
This spectral model is discussed and settled in a more general parabolic framework in 
Section \ref{sec2.1}.
As a particular case of our main results established in Section \ref{sec_poser}, we have the following result for the heat equation \eqref{heatEq_intro} with homogeneous Dirichlet boundary conditions.

\begin{thmnonumbering}
Let $T>0$ arbitrary. Assume that $\partial\Omega$ is piecewise $C^1$.
There exists a unique\footnote{Here, it is understood that the optimal set $\omega^*$ is unique within the class of all measurable subsets of $\Omega$ quotiented by the set of all measurable subsets of $\Omega$ of zero measure.} optimal observation measurable set $\omega^*$, solution of \eqref{pb_intro}. Moreover:
\begin{itemize}
\item $C_{T}(\chi_{\omega^*})<C_{T,\textrm{rand}}(\chi_{\omega^*})$.
\item The optimal set $\omega^*$ is open and semi-analytic. In particular, it has a finite number of connected components and $\vert\partial\omega^*\vert=0$.
\item The optimal set $\omega^*$ is completely characterized from a finite-dimensional spectral approximation, by keeping only a finite number of modes. More precisely, for every $N\in \N^*$, there exists a unique measurable set $\omega^N$ such that $\chi_{\omega^N}\in\mathcal{U}_L$ maximizes the functional
$$
\chi_\omega \longmapsto \inf_{1\leq j\leq N} \frac{e^{2\lambda_jT}-1}{2\lambda_j} \int_{\omega} \phi_j(x)^2\, dx
$$
over $\mathcal{U}_L$. Moreover $\omega^N$ is open and semi-analytic. Furthermore, the sequence of optimal sets $\omega^N$ is stationary, and there exists $N_0\in\N^*$ such that $\omega^N=\omega^*$ for every $N\geq N_0$.
The stationarity integer $N_0$ decreases as $T$ increases and $N_0=1$  whenever $T$ is large enough. In that case, the optimal shape is completely determined by the first eigenfunction.
\end{itemize}
\end{thmnonumbering}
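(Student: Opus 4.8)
The plan is to reduce the infinite-dimensional problem \eqref{pb_intro} to a sequence of finite-dimensional ones, and then to exploit the super-exponential growth of the weights $\gamma_j := (e^{2\lambda_j T}-1)/(2\lambda_j)$ together with the analyticity of the eigenfunctions. Write $\sigma_j(a) = \int_\Omega a(x)\phi_j(x)^2\,dx$ for $a$ in the convexified class $\overline{\mathcal{U}_L} = \{a\in L^\infty(\Omega;[0,1]) : \int_\Omega a = L|\Omega|\}$, which is convex and weak-star compact, and set $J_N(a) = \inf_{1\le j\le N}\gamma_j\sigma_j(a)$ and $J(a) = \inf_{j\ge 1}\gamma_j\sigma_j(a)$. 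These are concave and upper semicontinuous for the weak-star topology, so the relaxed suprema are attained. First I would treat the truncated problem $\sup_{\overline{\mathcal{U}_L}} J_N$: since $J_N$ is a finite infimum of linear forms, I apply a minimax (Sion) argument to exchange $\sup_a$ with the minimum over the simplex $\Delta_N$ of multipliers $\tau=(\tau_j)$, so that the inner maximization becomes a bathtub problem $\max_{a\in\overline{\mathcal{U}_L}}\int_\Omega a(x)\,\varphi_\tau(x)\,dx$ with $\varphi_\tau = \sum_{j=1}^N \tau_j\gamma_j\phi_j^2\ge 0$. Its solution is $a = \chi_{\{\varphi_\tau > c\}}$ up to the level set $\{\varphi_\tau = c\}$; the key point is that $\varphi_\tau$, a finite combination of squares of eigenfunctions, is \emph{real-analytic} in the interior of $\Omega$ (interior analytic hypoellipticity of the Laplacian) and non-constant, whence $|\{\varphi_\tau = c\}| = 0$. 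Thus the optimizer is bang-bang: the relaxed truncated problem has no gap and its maximizer is the characteristic function of an open super-level set $\omega^N = \{\varphi_{\tau^*}> c\}$ of an analytic function, hence open, semi-analytic, with finitely many connected components and $|\partial\omega^N|=0$.

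Next I would show the truncation is eventually exact. The values $J_N^* := \max J_N$ are nonincreasing and bounded below by $J^* := \sup J$, while testing $a\equiv L$ gives $J^*\ge L\gamma_1>0$, and $J^*\le J_N^*\le \gamma_1 M_1\le\gamma_1$ with $M_1=\max_\omega\int_\omega\phi_1^2$. Stationarity amounts to showing that for $N$ large, adding the $(N+1)$-th mode does not lower the infimum at $\omega^N$, i.e. $\gamma_{N+1}\sigma_{N+1}(\chi_{\omega^N})\ge J_N^*$. Since $J_N^*\le\gamma_1$ is bounded while $\gamma_j$ grows super-exponentially in $\lambda_j$, it suffices to have a \emph{uniform lower bound} on the spectral masses $\inf_{\omega\in\mathcal{U}_L}\int_\omega\phi_j^2\ge \varepsilon_j$ with $\gamma_j\varepsilon_j\to+\infty$, i.e. with $\varepsilon_j$ decaying slower than $\gamma_1/\gamma_j\asymp \lambda_j e^{-2\lambda_j T}$. \textbf{This uniform lower bound is the main obstacle}, and it is exactly here that the parabolic structure is decisive: high modes are weighted so heavily that they cannot bind, provided the mass any admissible $\omega$ must capture from $\phi_j$ does not decay faster than the super-exponentially small threshold. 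For the Dirichlet-Laplacian this is furnished by quantitative unique-continuation/anti-concentration estimates (of tunneling type, with a loss only in $\sqrt{\lambda_j}$ rather than $\lambda_j$), encoded in the paper's spectral hypotheses; granting it, there exists $N_0$ with $J_N^* = J^*$ and $\omega^N=\omega^*$ for all $N\ge N_0$, so $\omega^*$ inherits openness, semi-analyticity, finiteness of components and $|\partial\omega^*|=0$.

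For uniqueness, the minimax saddle provides an optimal multiplier $\tau^*$ supported on the active modes $j\le N_0$, and every maximizer $a^*$ of $J$ must be optimal for the corresponding bathtub functional $a\mapsto\int_\Omega a\,\varphi_{\tau^*}$; since $\{\varphi_{\tau^*}=c\}$ is null, any such $a^*$ is a.e. $\{0,1\}$-valued, i.e. bang-bang. A strict convex combination of two distinct characteristic functions takes the value $1/2$ on a set of positive measure, hence is not bang-bang, yet would again be a maximizer by concavity of $J$: this contradiction forces the maximizer, and thus $\omega^*$, to be unique. For the dependence on $T$, the ratio $\gamma_j/\gamma_1=\frac{\lambda_1(e^{2\lambda_j T}-1)}{\lambda_j(e^{2\lambda_1 T}-1)}$ increases super-exponentially in $T$ for each $j\ge 2$, so the threshold below which mode $j$ can bind shrinks and $N_0$ is nonincreasing in $T$; once $T$ is large enough that $\gamma_j\inf_{\omega}\int_\omega\phi_j^2>\gamma_1 M_1$ for all $j\ge 2$, only the first mode is active, $N_0=1$, and the problem reduces to the single bathtub maximization of $\int_\omega\phi_1^2$, which (as $\phi_1$ is analytic and sign-definite in $\Omega$) has the claimed unique open semi-analytic solution.

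Finally, to obtain the strict inequality $C_T(\chi_{\omega^*})<C_{T,\mathrm{rand}}(\chi_{\omega^*})$, recall that averaging over the random signs eliminates the cross terms $\int_{\omega^*}\phi_i\phi_j$ $(i\ne j)$ present in the deterministic functional $\int_0^T\int_{\omega^*}y(t,x)^2\,dx\,dt$, so that $C_T\le C_{T,\mathrm{rand}}$ in general, with strictness as soon as one cross term is nonzero on $\omega^*$. I would show that $\int_{\omega^*}\phi_i\phi_j\ne 0$ for some pair $i\ne j$ — otherwise the restrictions $\phi_j|_{\omega^*}$ would form an orthogonal family, incompatible with $\omega^*$ being a proper semi-analytic subset of $\Omega$ — and then exhibit an explicit two-mode initial datum whose deterministic Rayleigh quotient falls strictly below the diagonal (randomized) value, which yields $C_T(\chi_{\omega^*})<C_{T,\mathrm{rand}}(\chi_{\omega^*})$.
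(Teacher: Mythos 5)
Your overall route is essentially the paper's own: relaxation to $\overline{\mathcal{U}}_L$, Sion's minimax theorem to turn the truncated problem into a bathtub problem for $\varphi_\tau=\sum_{j\le N}\tau_j\gamma_j\phi_j^2$, analyticity of the eigenfunctions plus the Dirichlet boundary condition to rule out level sets of positive measure (the paper's $\Hun$ and $\Htrois$), the uniform-over-measurable-sets lower bound $\int_\omega\phi_j^2\gtrsim e^{-2C\sqrt{\lambda_j}}$ of \cite{AEWZ} to neutralize the high modes, and concavity of $J$ to force uniqueness (two distinct maximizing characteristic functions would yield a non-bang-bang maximizer). One harmless deviation: you use the AEWZ bound uniformly over $\mathcal{U}_L$ to conclude directly that $J=J_{N_0}$ identically on $\overline{\mathcal{U}}_L$; the paper instead formulates the weaker, pointwise-in-$a$ hypothesis $\Hdeux$ and therefore needs a perturbation argument $a_t=a^*+t(a^{N_0}-a^*)$ around the relaxed maximizer. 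Since for the heat equation the AEWZ estimate does furnish the uniform version, your shortcut is legitimate here (it just would not prove the general Theorem \ref{mainTheo} under $\Hdeux$ alone).

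The genuine gap is in the final step. For $C_T(\chi_{\omega^*})<C_{T,\textnormal{rand}}(\chi_{\omega^*})$ it is \emph{not} enough that "one cross term is nonzero on $\omega^*$". The randomized constant is the smallest diagonal Gramian entry, $C_{T,\textnormal{rand}}(\chi_{\omega^*})=g_{j_0j_0}=\gamma_{j_0}(T)\int_{\omega^*}\phi_{j_0}^2\,dx$ for an active mode $j_0\le N_0$, and the two-mode test vector $X=\cos\alpha\, e_p+\mathrm{e}^{i\beta}\sin\alpha\, e_q$ gives, to first order in $\alpha$,
\begin{equation*}
\langle G_{T,N}(\chi_{\omega^*})X,X\rangle = g_{pp}+2\alpha\,\Real\bigl(g_{pq}\mathrm{e}^{i\beta}\bigr)+o(\alpha),
\end{equation*}
i.e.\ a value strictly below $g_{pp}$ only. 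If your nonzero cross term involves a pair $p,q$ both different from $j_0$, then $g_{pp}\ge g_{j_0j_0}$ and falling slightly below $g_{pp}$ does not place you below $C_{T,\textnormal{rand}}$; the argument does not close. What is needed is a nonzero off-diagonal entry in the row of the \emph{active} mode $j_0$, and this is exactly what the paper proves (Lemma \ref{lemm4}): if $\int_{\omega^*}\phi_{j_0}\phi_k\,dx=0$ for every $k\ne j_0$, then $\chi_{\omega^*}\phi_{j_0}=c\,\phi_{j_0}$ in $L^2(\Omega)$, forcing $\phi_{j_0}$ to vanish on the nonempty open set $\Omega\setminus\overline{\omega^*}$, which contradicts analyticity. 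Your orthogonality idea is the right tool, but it must be applied to the expansion of $\chi_{\omega^*}\phi_{j_0}$ itself rather than to "some pair $i\ne j$", and the perturbation must then be taken around $e_{j_0}$.
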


A more general result (Theorem \ref{mainTheo}) will be established in a general parabolic framework.
In the case of the heat equation, one of the important ingredients of the proof is a fine lower bound estimate (stated in \cite{AEWZ}) of the spectral quantities $\int_{\omega}\phi_{j}(x)^2\, dx$, which is uniform over measurable subsets $\omega$ of a given measure.

Note that this existence and uniqueness result holds for every orthonormal basis of eigenfunctions of the Dirichlet-Laplacian, but the optimal set depends, in principle, on the specific choice of the basis. Of course, for $T>0$ large enough, the optimal set is independent of the basis since it is completely determined by the first eigenfunction.

These properties, stated here for the heat equation \eqref{heatEq_intro} (and proved more generally for parabolic equations under an appropriate spectral assumption, see further) are in strong contrast with the results of \cite{PTZObs1,PTZobspb1,PTZobsND} established for conservative wave and Schr\"odinger equations. In that context of wave-like equations it was proved that:
\begin{itemize}
\item when considering the problem with fixed initial data, the optimal set could be of Cantor type (hence, $\vert\partial\omega\vert>0$) even for smooth initial data;
\item the corresponding randomized observability constant is equal to
$\inf_{j\in\N^*} \int_\omega \phi_j(x)^2\, dx$, and, with respect to \eqref{CTrand_intro}, the evident difference is that all weights are equal to $1$. This is not surprising in view of the conservative properties of the wave or Schr\"odinger equation, however the fact that all frequencies have the same weight causes a strong instability of the optimal sets $\omega^N$ (maximizers of the corresponding spectral approximation). It was proved in \cite{henrot_hebrardSICON,PTZObs1} that the best possible set $\omega^N$ for $N$ modes is actually the worst possible one when considering $N+1$ modes (\textit{spillover} phenomenon).
\end{itemize}

In contrast, for the parabolic problems under consideration, we prove that this instability phenomenon does not occur, and that the sequence of maximizers $\omega^N$ is constant for $N$ large enough, equal to the optimal set $\omega^*$.
This stationarity property is of particular interest in view of designing the best observation set $\omega^*$ in practice.

\medskip

In Section \ref{sec_poser} we provide more details on these results, and state them in a far more general setting, involving in particular the Stokes equation and anomalous diffusion equations (with fractional Laplacian). 
For the Stokes equation
\begin{equation}\label{Stokesequation}
\partial_t y -\triangle y + \nabla p=0,\quad\mathrm{div}\, y=0,
\end{equation}
considered on the unit disk with Dirichlet boundary conditions, we establish that there exists a unique optimal observation set in $\mathcal{U}_L$, sharing nice regularity properties as above.

\smallskip

Let us mention a striking feature occuring for the anomalous diffusion equation
\begin{equation}\label{anomalouseq}
\partial_t y + (-\triangle)^\alpha y=0 ,
\end{equation}
considered on some domain $\Omega$, where $(-\triangle)^\alpha$ is some positive power of the Dirichlet-Laplacian.
Note that such equations are well recognized as being relevant models in many problems encountered in physics (plasma with slow or fast diffusion, aperiodic crystals, spins, etc), in biomathematics, in economy, also in imaging sciences (see for instance \cite{Metzler,Miller,Sokolov}). Hence they provide an important class of parabolic equations entering into the general framework developed in the paper.

Given $T>0$ arbitrary, we prove that if $\partial\Omega$ is piecewise $C^1$ and if $\alpha>1/2$ (or if $\alpha=1/2$ and $T$ is large enough) then there exists a unique optimal observation domain, independently on the Hilbert basis of eigenfunctions under consideration.
Furthermore, we prove the unexpected facts that:
\begin{itemize}
\item in the Euclidean square $\Omega=(0,\pi)^2$, when considering the usual Hilbert basis of eigenfunctions consisting of products of sine functions, for every $\alpha>0$ there exists a unique optimal set in $\mathcal{U}_{L}$ (as in the theorem), which is moreover open and semi-analytic and thus has a finite number of connected components (and this, whatever the value of $\alpha>0$ may be);
\item in the Euclidean disk $\Omega=\{ x\in\R^2\ \vert\ \Vert x\Vert< 1\}$, when considering the usual Hilbert basis of eigenfunctions parametrized in terms of Bessel functions, for every $\alpha>0$ there exists a unique optimal set $\omega^*$ (as in the theorem), which is moreover open, radial, with the following  additional property:
\begin{itemize}
\item if $\alpha>1/2$ then $\omega^*$ consists of a finite number of concentric rings that are at a positive distance from the boundary;
\item if $\alpha<1/2$ (or if $\alpha=1/2$ and $T$ is small enough) then $\omega^*$ consists of an infinite number of concentric rings accumulating at the boundary!
\end{itemize}
\end{itemize}
This surprising result shows that the complexity of the optimal shape does not only depend on the operator but also on the geometry of the domain $\Omega$.

It must be underlined that the proof of these properties (done in Section \ref{ex4}) is lengthy and particularly difficult in the case $\alpha<1/2$. It requires the development of very fine estimates for Bessel functions, combined with the use of quantum limits (semi-classical measures) in the disk, nontrivial minimax arguments and analyticity considerations.

\medskip

Several numerical simulations based on the spectral approximation described previously are provided in Section \ref{sec_anomalous}.
They show in particular what is the optimal shape and location of a thermometer in a square or in a disk.

\medskip

The paper is structured as follows.

Section \ref{sec:obsCst} is devoted to model and solve the problem of finding a best observation domain for parabolic equations. The model is discussed and defined in Section \ref{sec2.1}, based on the introduction of the randomized observability inequality. The problem is solved in a general parabolic setting in Section \ref{sec_poser}, where it is shown that, under an appropriate spectral assumption, there exists a unique optimal observation set, which can moreover be recovered from a finite dimensional spectral approximation problem. 
Section \ref{sec_Stokes} is devoted to the application to the Stokes equation on the unit disk. In Section \ref{sec_anomalous}, we study the case of anomalous diffusion equations and then we provide several numerical simulations illustrating our results and in particular the stationarity feature of the sequence of optimal sets.
Further comments  on the spectral assumption are presented in Section \ref{sec_LBSC}, from a semi-classical analysis viewpoint. 

All results are proved in Section \ref{sec_proofs}.
It must be underlined that the proof concerning the anomalous diffusion equations, in particular in the case $\alpha<1/2$, is long and very technical. It is actually unexpectedly difficult. The proof concerning the Stokes equation is as well for a large part based on facts derived in the previous proof.

Section \ref{sec_ccl} provides a conclusion and several further comments and open problems.

\section{Optimal sensor shape and location / optimal observability}\label{sec:obsCst}
Let $\Omega$ be an open bounded connected subset of $\R^n$.
Throughout the paper we consider the problem of determining the optimal observation domain for the abstract parabolic model
\begin{equation}\label{heatEq}
\partial_t y+A_{0} y=0,
\end{equation}
where $A_{0}:D(A_0)\rightarrow L^2(\Omega,\C)$ be a densely defined operator. Precise assumptions on $A_0$ will be done further.
As the main reference, we can keep in mind the typical example of the heat equation with Dirichlet boundary conditions overviewed in the introduction. But our analysis and results will be established for a large class of parabolic operators.

At this stage all what we need to assume, in order to establish the model that we will study, is that there exists a normalized 
Hilbert basis $(\phi_j)_{j\in\N^*}$ of $L^2(\Omega,\C)$ consisting of (complex-valued) eigenfunctions of $A_{0}$, associated with the (complex) eigenvalues $(\lambda_j)_{j\in\N^*}$.

\subsection{The model}\label{sec2.1}
The aim of this section is to introduce and define a relevant mathematical model of the problem of best observation.
The first ingredient is the notion of observability inequality.

\paragraph{Observability inequality.}
For every $y^0\in D(A_{0})$, there exists a unique solution $y\in C^0(0,T;D(A_{0}))\cap C^1(0,T;L^2(\Omega))$ of \eqref{heatEq} such that $y(0,\cdot)=y^0(\cdot)$.
For every measurable subset $\omega$ of $\Omega$, the equation \eqref{heatEq} is said to be \textit{observable} on $\omega$ in time $T$ if there exists $C>0$ such that
\begin{equation}\label{ineqobs}
C \Vert y(T,\cdot)\Vert_{L^2(\Omega)}^2
\leq \int_0^T\int_\omega |y(t,x)|^2 \,dx \, dt,
\end{equation}
for every solution of \eqref{heatEq} such that $y(0,\cdot)\in D(A_{0})$.
This inequality is called \textit{observability inequality}, and the constant defined by
\begin{equation}\label{defCT}
C_T(\chi_\omega)=\inf\left\{ \frac{ \int_0^T\int_\omega |y(t,x)|^2\,dx \, dt }{\Vert y(T,\cdot)\Vert^2_{L^2(\Omega)}} \ \big\vert\  y^0\in D(A_{0}) \setminus\{0\} \right\},
\end{equation}
is called the \textit{observability constant}. It is the largest possible nonnegative constant for which \eqref{ineqobs} holds. In other words, the equation \eqref{heatEq} is observable on $\omega$ in time $T$ if and only if $C_T(\chi_\omega)>0$.

\begin{remark}
It is well known that, if $A_{0}$ is the negative of the Dirichlet, or Neumann, or Robin Laplacian, 
then the equation \eqref{heatEq} is observable (see \cite{EFC,FursikovImanuvilov,LebeauRobbiano,TucsnakWeiss}), for every open subset $\omega$ of $\Omega$.
The observability property holds as well, e.g., for the linearized Cahn-Hilliard operator corresponding to $\Omega\subset \R^n$, $A_{0}=(-\triangle)^2$, with the boundary conditions $y_{\vert\partial\Omega}=\triangle y_{\vert\partial\Omega}=0$
(see \cite{TucsnakWeiss}). For the Stokes operator, the observability property follows from \cite[Lemma 1]{FCGIP}.\footnote{More precisely, in order to derive the usual observability inequality from the Carleman estimate proved in this reference, it suffices to estimate from below the left-hand side weight on $[T/4,3T/4]$, to estimate from above the right-hand weight, and to use the fact that the function $t\mapsto\Vert y(t,\cdot) \Vert_{L^2}$ is nonincreasing.}
\end{remark}

As explained in the introduction, throughout the paper we fix a real number $L\in(0,1)$ and we will search an optimal domain in the set
\begin{equation}\label{defUL}
\mathcal{U}_L = \{ \chi_\omega \in L^\infty(\Omega;\{0,1\}) \ \vert\ \omega\ \textrm{is a measurable subset of}\ \Omega \ \textrm{of Lebesgue measure}\ \vert\omega\vert=L\vert\Omega\vert\}.
\end{equation}
This gives an account for the fact that we can measure the solutions only over a part of the whole domain $\Omega$.

Having in mind the observability inequality \eqref{ineqobs}, it is a priori natural to model the question of the optimal location of sensors in terms of maximizing the observability constant $C_T(\chi_\omega)$ over the set $\mathcal{U}_L$ defined by \eqref{defUL}, where $T>0$ is  fixed. Actually, when implementing a reconstruction method, the observability constant $C_T(\chi_\omega)$ gives an account for the well-posedness of the corresponding inverse problem. More precisely, the larger the observability constant is, and the better conditioned the inverse problem is.

However at this stage two remarks are in order.

Firstly, settled as such, the problem is difficult to handle, due to the presence of crossed terms at the right-and side of \eqref{ineqobs} when considering spectral expansions. This problem, which has been discussed thoroughly in \cite{PTZObs1,PTZobsND}, is quite similar to the open problem of determining the best constants in Ingham’s inequalities (see \cite{Ingham,JaffardTucsnakZuazua}). Here, one is faced with the problem of determining the infimum of eigenvalues of an infinite dimensional symmetric nonnegative matrix (namely, the Gramian, see below). Although this criterion has a clear sense, it leads to an optimal design problem which does not seem to be easily tractable.

Secondly, even though the problem of maximizing the observability constant seems natural at the first glance, it is actually not so relevant with respect to the practical issues that we have in mind. Indeed in practice one is led to deal with a large number of solutions: when implementing a reconstruction process, one has to carry out in general a very large number of measures; likewise, when implementing a control procedure, the control strategy is expected to be efficient in general, but maybe not exactly for all cases. The issue that we raise here is the fact that the above observability inequality \eqref{ineqobs} is \textit{deterministic}, and thus the observability constant $C_T(\chi_\omega)$ is pessimistic since it corresponds to a worst possible case. It is likely that in practice this worst case will not occur very often, and hence the deterministic observability constant is not a relevant criterion when realizing a large number of experiments. Instead of that, we are going to propose an averaged version of the observability constant, better suited to our purposes, and defined in terms of probabilistic arguments.

We next describe this procedure, inspired by \cite{BurqTzvetkov1} and which has been used as well in \cite{PTZobsND} to deal with wave and Schr\"odinger equations. 

\paragraph{Randomized observability inequality.}
Every solution $y(\cdot)$ of \eqref{heatEq} such that $y(0,\cdot)=y_0(\cdot)$ can be expanded as
\begin{equation}\label{yDecomp}
y(t,x)=\sum_{j=1}^{+\infty}a_je^{-\lambda_jt}\phi_j (x),
\end{equation}
where 
\begin{equation}\label{defajbj}
a_j = \int_\Omega y^0(x) \overline{\phi}_j (x)\, dx,
\end{equation}
for every $j\in\N^*$.
Using this spectral decomposition, 
the change of variable $b_j=a_je^{-\lambda_jT}$ and an easy density argument, we get
\begin{equation}\label{defCTdeterministic}
C_T(\chi_\omega)=\inf_{\sum_{j=1}^{+\infty}|b_j|^2=1}\ \  \int_0^T  \int_\omega\left|\sum_{j=1}^{+\infty}b_je^{\lambda_j t}\phi_j(x)  \right|^2 dx \, dt.
\end{equation}
As briefly explained previously, $C_T(\chi_\omega)$ appears as the infimum of the eigenvalues of a Gramian operator, which is the infinite-dimensional Hermitian nonnegative matrix
\begin{equation}\label{defGT}
G_T(\chi_\omega) = \left(\frac{e^{(\lambda_{j}+\bar{\lambda}_k)T}-1}{\lambda_{j}+\bar{\lambda}_k}\int_{\omega}\phi_{j}(x)\overline{\phi}_k(x)\, dx\right)_{ j,k\geq 1}.
\end{equation}
Due to the crossed terms appearing when expanding the square in \eqref{defCTdeterministic}, the resulting optimal design problem, consisting of maximizing $C_T(\chi_\omega)$ over the set $\mathcal{U}_L$, is not easily tractable, at least in view of deriving theoretical results.
Moreover, from the practical point of view the problem of modeling the best observation has to be done, having in mind that the best observation domain should be designed to be the best possible in average, that is, over a large number of experiments. The observability constant $C_T(\chi_\omega)$ above is by definition deterministic, and thus pessimistic in the sense that is gives an account for the worst possible case. In practice, when carrying out a large number of experiments, it can however be expected that the worst possible case does not occur very often.
Having this remark in mind, we next define a new notion of observability inequality by considering an \textit{average over random initial data}. We then define below a notion of \textit{randomized observability constant}, which is in our view better suited to the model of best observation.
We follow \cite{PTZobsND}, accordingly to early ideas developed in \cite{PaleyZygmund} for harmonic analysis issues and recently in \cite{Burq,BurqTzvetkov1} in view of ensuring the probabilistic well-posedness of classically ill-posed supercritical wave or Schr\"odinger equations.

For any given $y^0\in D(A_{0})$, the Fourier coefficients of $y^0$, defined by \eqref{defajbj}, are randomized by defining $a_j^\nu=\beta^\nu_{j}a_j$ for every $j\in\N^*$, where $(\beta_{j}^\nu)_{j\in\N^*}$ is a sequence of independent real random variables on a probability space $(\mathcal{X},\mathcal{F},\mathbb{P})$ having mean equal to $0$, variance equal to $1$, and a super exponential decay (for instance, independent Bernoulli random variables, see \cite{Burq,BurqTzvetkov1} for more details on randomization possibilities and properties).
For every $\nu\in\mathcal{X}$, the solution corresponding to the initial data $y^0_\nu = \sum_{j=1}^{+\infty}\beta_j^\nu a_j \phi_j$ is then
$y_\nu(t,\cdot)=\sum_{j=1}^{+\infty}\beta_j^\nu  a_j e^{-\lambda_jt}\phi_j(\cdot)$.
Instead of considering the deterministic observability inequality \eqref{ineqobs}, we define the \textit{randomized observability inequality} by
\begin{equation}\label{ineqobsrand}
C_{T,\textrm{rand}}(\chi_\omega) \Vert y(T,\cdot)\Vert_{L^2(\Omega)}^2
\ \leq\ \mathbb{E} \int_0^T\int_\omega |y_\nu(t,x)|^2 \,dx\,  dt ,
\end{equation}
for every solution $y$ of \eqref{heatEq} such that $y(0)\in D(A_0)$, where $\mathbb{E}$ is the expectation over the space $\mathcal{X}$ with respect to the probability measure $\mathbb{P}$. The nonnegative constant $C_{T,\textrm{rand}}(\chi_\omega)$ is called \textit{randomized observability constant} and is defined (by density) by
\begin{equation}\label{defCTrandomized}
C_{T,\textrm{rand}}(\chi_\omega)
=
\inf_{\sum_{j=1}^{+\infty} |b_j|^2=1}\ 
\mathbb{E} \int_0^T  \int_\omega \Big\vert\sum_{j=1}^{+\infty}\beta_j^\nu  b_j e^{\lambda_j t}\phi_j(x)  \Big\vert^2 dx \, dt  .
\end{equation}
It is the randomized counterpart of the deterministic constant $C_T(\chi_\omega)$ defined by \eqref{defCTdeterministic}.
Note that
\begin{equation}\label{ineqrand}
0\leq C_T(\chi_\omega)\leq C_{T,\textnormal{rand}}(\chi_\omega),
\end{equation}
for every measurable subset $\omega$ of $\Omega$. The inequalities can be strict (see Theorem \ref{mainTheo} further).

\begin{proposition}\label{lemma:random}
Let $T>0$ arbitrary. For every measurable subset $\omega$ of $\Omega$, we have
$$
C_{T,\textnormal{rand}}(\chi_\omega)= \inf_{j\in\N^*}\gamma_j(T)\int_\omega |\phi_j(x)|^2 \, dx,
$$
with
\begin{equation}\label{defgammaj}
\gamma_j(T)=
\left\{ \begin{array}{lll}
\displaystyle\frac{e^{2\Real(\lambda_j)T}-1}{2\Real(\lambda_j)} & \textrm{if} & \Real(\lambda_j)\neq 0, \\
T & \textrm{if} & \Real(\lambda_j)=0 .
\end{array}\right.
\end{equation}
\end{proposition}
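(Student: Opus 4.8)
The plan is to compute the randomized observability constant directly from its definition \eqref{defCTrandomized}, the guiding principle being that averaging over the random signs $\beta_j^\nu$ orthogonalizes the spectral modes and thereby eliminates the crossed terms that render the deterministic constant \eqref{defCTdeterministic} intractable. Throughout I fix a sequence $(b_j)_{j\in\N^*}$ with $\sum_{j}|b_j|^2=1$ and work in the Hilbert space $H=L^2(\mathcal{X}\times(0,T)\times\omega)$ associated with the product measure $\mathbb{P}\otimes dt\otimes dx$, whose inner product is $\langle f,g\rangle_H=\mathbb{E}\int_0^T\int_\omega f\,\overline{g}\,dx\,dt$.

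The first step is to introduce, for each $j\in\N^*$, the function $g_j(\nu,t,x)=\beta_j^\nu\,b_j\,e^{\lambda_j t}\phi_j(x)$, so that the integrand in \eqref{defCTrandomized} is exactly $\big|\sum_j g_j\big|^2$ and the quantity to be computed is $\Vert\sum_j g_j\Vert_H^2$. I would then compute the Gram matrix $\langle g_j,g_k\rangle_H$. Since the random variables do not depend on $(t,x)$, the expectation factors out as $\mathbb{E}[\beta_j^\nu\beta_k^\nu]$, and the independence, zero mean and unit variance of the $\beta_j^\nu$ yield $\mathbb{E}[\beta_j^\nu\beta_k^\nu]=\delta_{jk}$. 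Hence the family $(g_j)_{j\in\N^*}$ is orthogonal in $H$ --- this is the crux of the randomization, and note that it does not even require the spatial orthogonality of the $\phi_j$ --- with
\begin{equation*}
\Vert g_j\Vert_H^2=|b_j|^2\Big(\int_0^T e^{2\Real(\lambda_j)t}\,dt\Big)\int_\omega|\phi_j(x)|^2\,dx .
\end{equation*}

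By the Pythagorean identity for the orthogonal family $(g_j)$, one gets $\Vert\sum_j g_j\Vert_H^2=\sum_j\Vert g_j\Vert_H^2$ (both sides being simultaneously finite or infinite, so that no a priori integrability hypothesis is needed). Evaluating the time integral, which equals $\gamma_j(T)$ as defined in \eqref{defgammaj} after distinguishing the cases $\Real(\lambda_j)\neq0$ and $\Real(\lambda_j)=0$, this gives
\begin{equation*}
\mathbb{E}\int_0^T\int_\omega\Big|\sum_{j=1}^{+\infty}\beta_j^\nu b_j e^{\lambda_j t}\phi_j(x)\Big|^2\,dx\,dt=\sum_{j=1}^{+\infty}|b_j|^2\,\gamma_j(T)\int_\omega|\phi_j(x)|^2\,dx .
\end{equation*}
It then remains to take the infimum of the right-hand side over the constraint $\sum_j|b_j|^2=1$. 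Setting $c_j=\gamma_j(T)\int_\omega|\phi_j(x)|^2\,dx\geq0$, the bound $\sum_j|b_j|^2 c_j\geq(\inf_k c_k)\sum_j|b_j|^2=\inf_k c_k$ gives one inequality, while testing against a unit sequence concentrated on a near-minimizing index $j_0$ gives the reverse one; hence the infimum equals $\inf_{j\in\N^*}c_j$, which is precisely the announced formula.

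The only point demanding genuine care --- and thus the step I would regard as the main (though modest) obstacle --- is the rigorous handling of the interchanges of $\mathbb{E}$, $\int$ and the infinite sum, together with the identification of $\Vert\sum_j g_j\Vert_H^2$ with the integral in \eqref{defCTrandomized}. This is entirely controlled by nonnegativity of the integrand (allowing Tonelli's theorem for the order of integration) and by the $L^2(\mathcal{X})$-orthogonality $\mathbb{E}[\beta_j^\nu\beta_k^\nu]=\delta_{jk}$; the assumed super-exponential decay of the $\beta_j^\nu$ furnishes far more integrability than is actually required here.
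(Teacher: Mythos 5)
Your proposal is correct and takes essentially the same approach as the paper: the identity $\mathbb{E}[\beta_j^\nu\beta_k^\nu]=\delta_{jk}$ (independence, zero mean, unit variance) eliminates the crossed terms, reducing the expectation to the diagonal sum $\sum_j |b_j|^2\,\gamma_j(T)\int_\omega|\phi_j(x)|^2\,dx$, whose infimum over the unit sphere of $\ell^2$ is $\inf_j \gamma_j(T)\int_\omega|\phi_j(x)|^2\,dx$. Your Gram-matrix/Pythagoras packaging is simply a cleaner way of organizing the paper's Fubini-plus-independence computation, and your explicit final infimum argument spells out what the paper leaves as ``the conclusion follows easily.''
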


\begin{proof}
Using the Fubini theorem and the independence of the random laws, one has
\begin{eqnarray*}
C_{T,\textrm{rand}}(\chi_\omega) & = & \inf_{\sum_{j=1}^{+\infty}|b_j|^2=1}\ \int_0^T\int_\omega \sum_{j,k=1}^{+\infty}\mathbb{E}(\beta_j^\nu \beta_j^\nu)b_j\bar{b}_k e^{(\lambda_j+\bar{\lambda}_k)t}\phi_j(x)\overline{\phi}_k(x)\, dx \, dt\\
  & = & \inf_{\sum_{j=1}^{+\infty}|b_j|^2=1}\ \sum_{j=1}^{+\infty}|b_j|^2\int_0^T e^{2\Real(\lambda_j)t}\, dt \int_\omega |\phi_j(x)|^2\, dx,
\end{eqnarray*}
and the conclusion follows easily.
\end{proof}

This result clearly shows how the randomization procedure rules out the off-diagonal terms in the Gramian \eqref{defGT}.

\paragraph{Conclusion: the optimal shape design problem.}
For every measurable subset $\omega$ of $\Omega$, we set
\begin{equation}\label{defJ}
J(\chi_\omega)= C_{T,\textrm{rand}}(\chi_\omega) = \inf_{j\in\N^*}\gamma_j(T) \int_\omega \vert\phi_j(x)\vert^2 \, dx,
\end{equation}
Throughout the paper, we will consider the problem of maximizing the functional $J$ over the set $\mathcal{U}_L$ defined by \eqref{defUL}, where the coefficients $\gamma_j(T)$ are defined by \eqref{defgammaj}.
In other words, we consider the problem
\begin{equation}\label{defpb}
\boxed{
\sup_{\chi_\omega\in\mathcal{U}_L} J(\chi_\omega) = \sup_{\chi_\omega\in\mathcal{U}_L} \inf_{j\in\N^*}\gamma_j(T)\int_\omega \vert\phi_j(x)\vert^2 \, dx.
}
\end{equation}
According to the previous discussion, this optimal shape design problem models the best sensor shape and location problem for the parabolic equation \eqref{heatEq}. 

The functional $J$ defined by \eqref{defJ} corresponds to an energy concentration measure.
As we will see, solving this problem requires spectral assumptions.

\subsection{The main result}\label{sec_poser}
In our main result below, it will be useful to consider the functional $J_N$ defined by
\begin{equation}\label{defJN}
J_N(\chi_\omega) = \inf_{1\leq j\leq N} \gamma_j(T) \int_{\omega} \vert\phi_j(x)\vert^2\, dx,
\end{equation}
for every measurable subset $\omega$ of $\Omega$, for every $N\in\N^*$.
The functional $J_N$ is the spectral truncation of the functional $J$ to the $N$ first terms.
We consider as well the shape optimization problem
\begin{equation}\label{pb_max_JN}
\sup_{\chi_\omega\in {\mathcal{U}}_L} J_N(\chi_\omega),
\end{equation}
which is a spectral approximation of the problem \eqref{defpb}. We call it the truncated problem.

Let us now provide the general parabolic framework and the required spectral assumptions.

\paragraph{Framework and assumptions.}
Let $\Omega$ be an open bounded connected subset of $\R^n$, and let $L\in(0,1)$ and $T>0$ be arbitrary.
Let $A_{0}:D(A_0)\rightarrow L^2(\Omega,\C)$ be a densely defined operator, generating a strongly continuous semigroup on $L^2(\Omega,\C)$.
We assume that there exists a Hilbert basis $(\phi_j)_{j\in\N^*}$ of $L^2(\Omega,\C)$ consisting of (complex-valued) eigenfunctions of $A_{0}$, associated with (complex) eigenvalues $(\lambda_j)_{j\in\N^*}$ such that $\Real(\lambda_1)\leq\cdots\leq \Real(\lambda_j)\leq\cdots$, and such that the following assumptions are satisfied:
\begin{itemize}
\item[$\Hun$] (\textit{Strong Conic Independence Property}) If there exists a subset $E$ of $\Omega$ of positive Lebesgue measure, an integer $N\in\N^*$, a $N$-tuple $(\alpha_{j})_{1\leq j\leq N}\in (\R_{+})^N$, and $C\geq 0$ such that $\sum_{j=1}^N \alpha_j \vert\phi_j(x)\vert^2 = C$ almost everywhere on $E$, then there must hold $C = 0$ and $\alpha_j = 0$ for every $j\in \{1,\cdots,N\}$.
\item[$\Hdeux$] For every $a\in L^\infty(\Omega;[0,1])$ such that $\int_{\Omega}a(x)\, dx=L|\Omega|$, one has
\begin{equation*}
\liminf_{j\rightarrow+\infty} \ \gamma_j(T) \int_\Omega a(x) \vert\phi_j(x)\vert^2\, dx > \gamma_1(T) ;
\end{equation*}
\item[$\Htrois$] The eigenfunctions $\phi_j$ are analytic in $\Omega$.
\end{itemize}

We start with a simple preliminary result for the truncated problem.

\begin{proposition}\label{truncTheo}
Under $\Hun$, for every $N\in\N^*$, the truncated problem \eqref{pb_max_JN} has a unique\footnote{Here and in the sequel, it is understood that the optimal set is unique within the class of all measurable subsets of $\Omega$ quotiented by the set of all measurable subsets of $\Omega$ of zero measure.} solution $\chi_{\omega^N}\in\mathcal{U}_L$.
Moreover, under $\Htrois$, $\omega^N$ is an open semi-analytic\footnote{A subset $\omega$ of a real analytic finite dimensional manifold $M$ is said to be semi-analytic if it can be written in terms of equalities and inequalities of analytic functions.
We recall that such semi-analytic subsets are stratifiable in the sense of Whitney (see \cite{Hardt,Hironaka}), and enjoy local finitetess properties, such that: local finite perimter, local finite number of connected components, etc.} set, and thus, in particular, it has a finite number of connected components.
\end{proposition}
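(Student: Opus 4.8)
The plan is to pass to the convex relaxation of the problem, to solve the resulting concave maximization by a minimax (saddle-point) argument, and then to use the strong conic independence property $\Hun$ to show that every relaxed maximizer is in fact a characteristic function and is unique. First I would introduce the relaxed admissible set
$$
\overline{\mathcal{U}}_L = \Big\{ a\in L^\infty(\Omega;[0,1]) \ :\ \int_\Omega a(x)\,dx = L|\Omega| \Big\},
$$
which is convex and compact for the weak-$*$ topology of $L^\infty(\Omega)=(L^1(\Omega))^*$ (the constraints are weak-$*$ closed and the set is bounded), and extend $J_N$ to it by $\widehat{J}_N(a)=\inf_{1\leq j\leq N}\gamma_j(T)\int_\Omega a(x)|\phi_j(x)|^2\,dx$. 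Writing the finite infimum as $\inf_{1\leq j\leq N}(\cdots)=\inf_{\beta\in\Delta_N}\sum_{j=1}^N\beta_j(\cdots)$ over the simplex $\Delta_N=\{\beta\in(\R_+)^N:\sum_j\beta_j=1\}$, the relaxed problem becomes the saddle problem for $F(a,\beta)=\sum_{j=1}^N\beta_j\gamma_j(T)\int_\Omega a|\phi_j|^2\,dx$, which is affine and weak-$*$ continuous in $a$ and affine in $\beta$. Since $\overline{\mathcal{U}}_L$ and $\Delta_N$ are convex and compact, Sion's minimax theorem gives $\sup_a\inf_\beta F=\inf_\beta\sup_a F$ and the existence of a saddle point $(a^*,\beta^*)$; in particular $\overline{\mathcal{U}}_L$-maximizers of $\widehat{J}_N$ exist.

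The key step, and the main obstacle, is the bang-bang property: showing there is no relaxation gap. For the fixed optimal strategy $\beta^*$, the saddle inequality says that $a^*$ maximizes the \emph{linear} functional $a\mapsto\int_\Omega a\,\varphi^*\,dx$ over $\overline{\mathcal{U}}_L$, where $\varphi^*(x)=\sum_{j=1}^N\beta_j^*\gamma_j(T)|\phi_j(x)|^2$. Because this is a linear program with a single mass constraint, there is a level $c\geq 0$ with $\{\varphi^*>c\}\subseteq\{a^*=1\}$ and $\{\varphi^*<c\}\subseteq\{a^*=0\}$, so the fractional part $\{0<a^*<1\}$ is contained in the level set $\{\varphi^*=c\}$. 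Setting $\alpha_j=\beta_j^*\gamma_j(T)\geq 0$, which are not all zero since $\sum_j\beta_j^*=1$ and $\gamma_j(T)>0$ by \eqref{defgammaj}, I would argue that if $\{\varphi^*=c\}$had positive measure, then $\sum_{j=1}^N\alpha_j|\phi_j|^2=c$ on a set of positive measure, and $\Hun$ would force $c=0$ and $\alpha_j=0$ for all $j$, a contradiction. Hence $|\{\varphi^*=c\}|=0$, so $a^*$ is bang-bang and $a^*=\chi_{\omega^N}$ with $\omega^N=\{\varphi^*>c\}$, $\chi_{\omega^N}\in\mathcal{U}_L$; moreover the threshold $c$ is the unique $L$-quantile of $\varphi^*$, since $\Hun$ rules out positive-measure level sets at \emph{every} value, so the distribution of $\varphi^*$ has no atoms.

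Uniqueness then follows from the product structure of the saddle set. Any two maximizers $a_1,a_2$ of $\widehat{J}_N$ over $\overline{\mathcal{U}}_L$ each form a saddle point together with the same minimax-optimal $\beta^*$ (a standard minimax fact: the set of saddle points is the product of the maximin-optimal and minimax-optimal sets), hence both maximize $a\mapsto\int_\Omega a\,\varphi^*\,dx$; since $\varphi^*$ has no positive-measure level set, this maximizer is unique, whence $a_1=a_2=\chi_{\{\varphi^*>c\}}$ almost everywhere. This simultaneously yields existence of a maximizer in $\mathcal{U}_L$ and its uniqueness (in the quotient sense of the footnote), and a fortiori the original (non-relaxed) truncated problem \eqref{pb_max_JN} admits $\chi_{\omega^N}$ as its unique solution.

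Finally, for the regularity statement I would invoke $\Htrois$: each $\phi_j$ being analytic in $\Omega$, its real and imaginary parts are real-analytic, so $|\phi_j|^2$ and hence the finite sum $\varphi^*$ are real-analytic on $\Omega$. Therefore $\omega^N=\{x\in\Omega:\varphi^*(x)-c>0\}$ is open and, being defined by a single analytic inequality, semi-analytic; by the stratification and local finiteness properties of semi-analytic sets recalled in the footnote it has a finite number of connected components. The only point requiring care here is the passage from the local to a global finiteness of connected components, which for the truncated functional is licit because $\varphi^*$ is a finite combination of analytic functions; this is exactly the feature that will fail in the limit $N\to+\infty$ in the delicate disk example.
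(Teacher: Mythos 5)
Your proof is correct and follows essentially the same route as the paper: relaxation to $\overline{\mathcal{U}}_L$, Sion's minimax theorem to produce a saddle point, the property $\Hun$ applied to the weighted sum $\varphi^*=\sum_j\beta_j^*\gamma_j(T)\vert\phi_j\vert^2$ to rule out level sets of positive measure (hence the bang-bang property and absence of relaxation gap), and analyticity under $\Htrois$ to conclude that $\omega^N$ is open and semi-analytic. The only deviation is the uniqueness step: the paper notes that every maximizer is a characteristic function and that two distinct ones would, by concavity of $J_N$, yield a non-bang-bang maximizer (a contradiction), whereas you invoke the interchangeability (product structure) of saddle points so that every maximizer solves the same linear program, which has a unique solution since $\varphi^*$ has atomless distribution; both arguments are valid, and yours makes explicit the saddle-set fact that the paper's phrase ``any optimal solution is a characteristic function'' implicitly relies on.
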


\begin{remark}\label{remark4}
If $A_0$ is defined on a domain $D(A_0)$ such that the eigenfunctions $\phi_j$ vanish on $\partial\Omega$ (Dirichlet boundary conditions), then moreover there exists $\eta^N>0$ such that the (Euclidean) distance between $\omega^N$ and $\partial\Omega$ is larger than $\eta^N$. 
\end{remark}

Our main result is the following.

\begin{theorem}\label{mainTheo}
Under $\Hun$ and $\Hdeux$, the optimal shape design problem \eqref{defpb} has a unique solution $\chi_{\omega^*}\in\mathcal{U}_L$. 

Moreover, there exists a smallest integer $N_0(T)$ such that
\begin{equation*}
J(\chi_{\omega^*})=\max_{\chi_\omega\in\mathcal{U}_L}J(\chi_\omega)=
\max_{\chi_\omega \in\mathcal{U}_L}J_N(\chi_\omega),
\end{equation*}
for every $N\geq N_0(T)$. In other words, the sequence $(\chi_{\omega^N})_{N\in\N^*}$ of maximizers of $J_N$ is stationary, that is, $\omega^*=\omega^{N_0(T)}=\omega^N$ for $N\geq N_0(T)$.
\\
The function $T\mapsto N_{0}(T)$ is nonincreasing, and if $\Real(\lambda_j)\rightarrow+\infty$ as $j\rightarrow+\infty$  then $N_0(T)=1$ whenever $T$ is large enough.

Under the additional assumption $\Htrois$, we have moreover that:
\begin{itemize}
\item $C_T(\chi_{\omega^*})< C_{T,\textnormal{rand}}(\chi_{\omega^*})$;
\item the optimal observation set $\omega^*$ is an open semi-analytic set and thus it has a finite number of connected components. 
\end{itemize}
\end{theorem}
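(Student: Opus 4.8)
The plan is to convexify. I would work with the weak-$\ast$ closure $K=\{a\in L^\infty(\Omega;[0,1]) : \int_\Omega a\,dx = L|\Omega|\}$ of $\mathcal{U}_L$, which is weak-$\ast$ compact, and extend the functionals to $\widetilde J(a)=\inf_{j\in\N^*}\gamma_j(T)\int_\Omega a|\phi_j|^2$ and $\widetilde J_N(a)=\inf_{1\le j\le N}\gamma_j(T)\int_\Omega a|\phi_j|^2$. Each map $a\mapsto\gamma_j(T)\int_\Omega a|\phi_j|^2$ is weak-$\ast$ continuous and affine, so $\widetilde J_N$ is weak-$\ast$ continuous (a finite min) while $\widetilde J$ is only weak-$\ast$ upper semicontinuous (an arbitrary inf). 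Hence $\widetilde J$ attains a maximum $\alpha$ on $K$ at some $a^\ast$, and $\sup_{\mathcal{U}_L}J=\alpha$. Since $\widetilde J\le\widetilde J_N\le\widetilde J_{N-1}$, the truncated maxima $\alpha_N=\max_K\widetilde J_N$ are nonincreasing with $\alpha_N\ge\alpha$, and I would track the gap between $\alpha_N$ and $\alpha$.

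The first substantive step is the finite-mode reduction, which comes from $\Hdeux$. Applying $\Hun$ with $N=1$ forbids $|\phi_1|^2$ from being constant on a set of positive measure, so $\int_\omega|\phi_1|^2<1$ for $\omega\neq\Omega$ and thus $\alpha\le\alpha_1=\gamma_1(T)\max_{\mathcal{U}_L}\int_\omega|\phi_1|^2<\gamma_1(T)$. At the maximizer $a^\ast$, assumption $\Hdeux$ gives $\delta>0$ and $N_0$ with $\gamma_j(T)\int_\Omega a^\ast|\phi_j|^2>\alpha+\delta$ for all $j>N_0$; since $\widetilde J(a^\ast)=\alpha$, the defining infimum is therefore attained among the first $N_0$ modes, so $\widetilde J_N(a^\ast)=\alpha$ for every $N\ge N_0$. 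Only finitely many modes are active — this is the precise sense in which parabolic damping ($\gamma_j(T)$ growth) kills high frequencies and rules out the spillover seen in the hyperbolic case.

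Next I would obtain the bang-bang structure and uniqueness from $\Hun$, exactly as in the proof of Proposition \ref{truncTheo} but applied to the finite active system. Near $a^\ast$ the functional equals $\min_{j\in A}\gamma_j(T)\int_\Omega a|\phi_j|^2$ for a finite active set $A\subset\{1,\dots,N_0\}$, a concave function of $a$; a Sion minimax argument rewrites the maximization as $\min_{\theta}\max_{a\in K}\int_\Omega a\big(\sum_{j\in A}\theta_j\gamma_j(T)|\phi_j|^2\big)$, and the inner maximization is solved by the bathtub principle, forcing the optimizer to be the characteristic function of a superlevel set $\{\sum_{j\in A}\theta_j\gamma_j(T)|\phi_j|^2>c\}$. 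Strict conic independence $\Hun$ guarantees that the corresponding level set has zero measure (so the maximizer is genuinely in $\mathcal{U}_L$, no relaxation) and that both the weights and the optimal set are unique; hence $a^\ast=\chi_{\omega^\ast}$ is the unique maximizer of $\widetilde J$.

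The hard part is upgrading convergence to stationarity, i.e.\ passing from $\alpha_N\to\alpha$ to $\alpha_N=\alpha$ for $N$ large. The convergence itself is soft: any weak-$\ast$ cluster point $\bar a$ of the (unique, characteristic) truncated maximizers $\chi_{\omega^N}$ satisfies, for each fixed $M$, $\widetilde J_M(\bar a)=\lim_N\widetilde J_M(\chi_{\omega^N})\ge\lim_N\alpha_N$ by continuity of $\widetilde J_M$ and $\widetilde J_M\ge\widetilde J_N$, whence $\widetilde J(\bar a)\ge\lim_N\alpha_N\ge\alpha$, forcing $\bar a=\chi_{\omega^\ast}$ and $\alpha_N\to\alpha$. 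Eventual equality, however, requires controlling the \emph{tail} modes $j>N$ at the moving optimizers: one must exclude a sequence $j_N>N\to+\infty$ with $\gamma_{j_N}(T)\int_{\omega^N}|\phi_{j_N}|^2<\alpha_N$, and here weak-$\ast$ convergence is powerless because the test function $|\phi_{j_N}|^2$ also moves with $N$. I expect this to be the main obstacle; it seems to demand a \emph{uniform-in-}$\omega$ strengthening of $\Hdeux$, namely $\liminf_{j}\gamma_j(T)\inf_{\chi_\omega\in\mathcal{U}_L}\int_\omega|\phi_j|^2>\alpha$, which in concrete instances such as the Dirichlet heat equation is exactly the uniform spectral lower bound of \cite{AEWZ}. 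Granting it, every tail mode of $\chi_{\omega^N}$ eventually exceeds $\alpha_N$, so $\widetilde J(\chi_{\omega^N})=\widetilde J_N(\chi_{\omega^N})=\alpha_N\le\alpha$, forcing $\alpha_N=\alpha$ and, by uniqueness, $\omega^N=\omega^\ast$. The remaining assertions then follow readily: under $\Htrois$, $\omega^\ast=\omega^{N_0}$ is open and semi-analytic by Proposition \ref{truncTheo}, hence has $|\partial\omega^\ast|=0$ and finitely many components; monotonicity of $T\mapsto N_0(T)$ and $N_0(T)=1$ for large $T$ come from the fact that the ratios $\gamma_j(T)/\gamma_1(T)$ increase in $T$ and diverge when $\Real(\lambda_j)\to+\infty$, so high modes become inactive; and $C_T(\chi_{\omega^\ast})<C_{T,\textrm{rand}}(\chi_{\omega^\ast})$ follows by checking that on $\omega^\ast$ (with $|\partial\omega^\ast|=0$) the Gramian $G_T(\chi_{\omega^\ast})$ has nonvanishing off-diagonal coupling, so its least eigenvalue lies strictly below the diagonal infimum computed in Proposition \ref{lemma:random}.
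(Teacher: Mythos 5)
Your first three steps (convexification, the finite-mode reduction via $\Hdeux$ applied at the relaxed maximizer $a^*$, and the bang-bang/uniqueness argument combining concavity with the Sion--bathtub--$\Hun$ machinery of Proposition \ref{truncTheo}) are essentially the paper's own argument and are sound, with one caveat: the claim that ``near $a^*$ the functional equals the truncated one'' is only valid along segments $a_t=a^*+t(b-a^*)$, where the inequality $\gamma_j(T)\int_\Omega a_t|\phi_j|^2\,dx\geq(1-t)\gamma_j(T)\int_\Omega a^*|\phi_j|^2\,dx$ controls the tail uniformly in $j>N_0$; genuine weak-$\ast$ neighborhoods give no such uniform control, so the local-to-global passage must be phrased via segments and concavity, exactly as in the paper's contradiction argument.

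The genuine gap is in your stationarity step. The theorem assumes only $\Hun$ and $\Hdeux$, but your argument that $\alpha_N=\alpha$ for large $N$ invokes the uniform-in-$\omega$ bound $\liminf_{j}\gamma_j(T)\inf_{\chi_\omega\in\mathcal{U}_L}\int_\omega|\phi_j|^2\,dx>\alpha$, which is strictly stronger than $\Hdeux$: a pointwise liminf bound at every fixed $a$ does not imply a bound on the infimum over the (non-metrizable, weak-$\ast$ compact) set, since the minimizing $\omega$ may chase the concentration of $\phi_j$. Indeed the paper's Remark \ref{rem4} stresses that even a weakened form of $\Hdeux$, at the single function $a^*$, suffices, so a proof needing uniformity proves a different theorem. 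The obstacle you flag (tail modes at the moving optimizers $\omega^N$) is an artifact of your detour through cluster points of $(\chi_{\omega^N})$, and it disappears if you use what your own third step already delivers: that step shows $a^*$ is a \emph{global} maximizer of the concave truncated functional $\widetilde J_{N_0}$, hence $\alpha_{N_0}=\widetilde J_{N_0}(a^*)=\alpha$. Then for every $N\geq N_0$ the pointwise inequalities $\widetilde J\leq \widetilde J_N\leq \widetilde J_{N_0}$ sandwich $\alpha\leq\alpha_N\leq\alpha_{N_0}=\alpha$, so $\alpha_N=\alpha$; and since $\widetilde J_N(a^*)\geq\widetilde J(a^*)=\alpha=\alpha_N$, the function $a^*$ maximizes $\widetilde J_N$, whence $\omega^N=\omega^*$ by the uniqueness in Proposition \ref{truncTheo}. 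No control of tail modes at $\omega^N$ is ever needed; this monotonicity sandwich is precisely how the paper concludes. A last, minor point: in the strict inequality $C_T(\chi_{\omega^*})<C_{T,\textnormal{rand}}(\chi_{\omega^*})$, the relevant ingredient is not $\vert\partial\omega^*\vert=0$ but the fact that $\Omega\setminus\omega^*$ has positive measure and $\phi_{j_0}$ is analytic, which forces some off-diagonal Gramian entry $\int_{\omega^*}\overline\phi_{j_0}\phi_{k_0}\,dx$ to be nonzero (Lemma \ref{lemm4}); a two-mode perturbation then pushes the quadratic form strictly below the diagonal entry.
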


Proposition \ref{truncTheo} and Theorem \ref{mainTheo} are proved in Section \ref{sec_proofs}.

\begin{remark}\label{remvanish}
In the next sections we will comment in detail on the assumptions done in the theorem, and provide classes of examples where they are satisfied (note however that proving their validity is far from obvious): heat and anomalous diffusion equations, Stokes equation.

We can however note, at this stage, that these assumptions are of different natures. 

The assumption $\Hun$ will be treated essentially with analyticity considerations. Indeed note that  $\Hun$ holds true as soon as the eigenfunctions $\phi_j$ are analytic in $\Omega$ (that is, under the assumption $\Htrois$) and vanish along $\partial\Omega$. This is often the case, for instance, for elliptic operators with analytic coefficients.  
It can be noted that a generalization of the property $\Hun$ has been studied for the Dirichlet-Laplacian in \cite{PriSiga}, where the $\alpha_{j}$ are arbitrary real numbers, and is proved to hold generically with respect to the domain $\Omega$.
The validity of $\Hun$ in general (for instance, in for Neumann boundary conditions) is an open problem.

The assumption $\Hdeux$, which can as well be seen from a semi-classical point of view (see comments in Section \ref{sec_LBSC} further) is related with nonconcentration properties of eigenfunctions. For instance proving it for heat-like equations will require the use of fine recent results providing lower bound estimates that are uniform with respect to the observation domain $\omega$.
\end{remark}

Before coming to these applications, several remarks are in order.

\begin{remark}
The fact that the sequence $(\chi_{\omega^N})_{N\in\N^*}$ of optimal sets of the truncated problem \eqref{truncoptdesignpb} is stationary is in strong contrast with the results of \cite{henrot_hebrardSCL, henrot_hebrardSICON, PTZ_HUM1D, PTZObs1, PTZobsND} in which such optimal design problems have been investigated for conservative wave or Schr\"odinger equations. In these references it was observed and proved that the corresponding maximizing sequence of subsets does not converge in general, except in very particular cases. Moreover, in dimension one, this sequence of sets has an instability property known as \textit{spillover phenomenon}. Namely, the best possible set for $N$ modes is actually the worst possible one when considering $N+1$ modes. This instability property has negative consequences in view of practical issues for designing a relevant notion of optimal set. 

In contrast, Theorem \ref{mainTheo} shows that, for the parabolic equation \eqref{heatEq}, the maximizing sequence of subsets is stationary, and hence only a finite number of modes is enough in order to capture all the information necessary to design the true optimal set. In other words, higher modes play no role.
Although this result can appear as intuitive because we are dealing with a parabolic equation, deriving such a property however requires the spectral property $\Hdeux$, which is commented and analyzed further.
\end{remark}

\begin{remark}
The fact that the optimal set $\omega^*$ is semi-analytic is a strong (and desirable) regularity property. In addition to the fact that $\omega^*$ has a finite number of connected components, this implies also that $\omega^*$ is Jordan measurable, that is, $\vert\partial\omega^*\vert=0$.
This is in contrast with the already mentioned fact that, for wave-like equations, when maximizing the energy for \textit{fixed data}, the optimal set may be a Cantor set of positive measure, even for smooth initial data (see \cite{PTZobspb1}).
\end{remark}

\begin{remark}[A convexified formulation of \eqref{defpb}]\label{rem:introrelax}
It is standard in shape optimization to introduce a convexified version of a maximization problem, since it may fail to have some solutions because of hard constraints. This is what is usually referred to as relaxation (see, e.g., \cite{BucurButtazzo}). 

Since the set $\mathcal{U}_L$ (defined by \eqref{defUL}) does not share nice compactness properties, we consider the convex closure of $\mathcal{U}_L$ for the weak star topology of $L^\infty$, which is
\begin{equation}\label{defALbar}
\overline{\mathcal{U}}_L = \left\{ a\in L^\infty(\Omega;[0,1])\ \vert\ \int_{\Omega} a(x)\,dx=L\vert\Omega\vert\right\}.
\end{equation}
Such a relaxation was used as well in \cite{munchHeat,PTZObs1,PTZobsND}.
Replacing $\chi_\omega\in\mathcal{U}_L$ with $a\in\overline{\mathcal{U}}_L$, we define a relaxed formulation of the optimal shape design problem \eqref{defpb} by
\begin{equation}\label{defJa}
\sup_{a\in \overline{\mathcal{U}}_L} J(a) ,
\end{equation}
where the functional $J$ is naturally extended to $\overline{\mathcal{U}}_L$ by
\begin{equation}\label{defJrelax}
J(a)=\inf_{j\in \N^*}\gamma_j(T) \int_{\Omega}a(x)\vert \phi_j(x)\vert^2\, dx,
\end{equation}
for every $a\in \overline{\mathcal{U}}_L$. Moreover, one has the following existence result.

\begin{lemma}\label{propExistRelax}
For every $L\in (0,1)$, the relaxed problem \eqref{defJa} has at least one solution $a^*\in\overline{\mathcal{U}}_L$.
\end{lemma}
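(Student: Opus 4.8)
The plan is to prove existence for the relaxed problem \eqref{defJa} by the direct method of the calculus of variations, exploiting that the relaxed admissible set $\overline{\mathcal{U}}_L$ is weak-star compact in $L^\infty(\Omega)$ and that the relaxed functional $J$, being an infimum of weak-star continuous linear functionals, is weak-star upper semicontinuous. First I would note that $\overline{\mathcal{U}}_L$ is a closed convex subset of the unit ball of $L^\infty(\Omega;[0,1])\subset L^\infty(\Omega)$; since $L^\infty(\Omega)$ is the dual of the separable space $L^1(\Omega)$, the Banach--Alaoglu theorem gives weak-star compactness of that ball, and the defining constraint $\int_\Omega a\,dx = L|\Omega|$ together with the pointwise constraint $0\leq a\leq 1$ are both preserved under weak-star limits (the linear constraint because $x\mapsto 1$ lies in $L^1(\Omega)$, and the order constraints by testing against nonnegative $L^1$ functions). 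Hence $\overline{\mathcal{U}}_L$ is weak-star compact.

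Next I would address the upper semicontinuity of $J$ as defined in \eqref{defJrelax}. For each fixed $j$, the map $a\mapsto \gamma_j(T)\int_\Omega a(x)|\phi_j(x)|^2\,dx$ is weak-star continuous because $|\phi_j|^2\in L^1(\Omega)$ (the $\phi_j$ being $L^2(\Omega)$, their squares are integrable). Therefore $J$, as the pointwise infimum over $j\in\N^*$ of this family of weak-star continuous affine functionals, is weak-star upper semicontinuous as an infimum of continuous functions. Combining weak-star compactness of $\overline{\mathcal{U}}_L$ with the upper semicontinuity of $J$, I would conclude by the standard argument: take a maximizing sequence $(a_k)$ in $\overline{\mathcal{U}}_L$, extract a weak-star convergent subsequence $a_k\rightharpoonup a^*$ with $a^*\in\overline{\mathcal{U}}_L$ by compactness, and deduce
$$
J(a^*)\geq \limsup_{k\to+\infty} J(a_k) = \sup_{a\in\overline{\mathcal{U}}_L} J(a),
$$
so that $a^*$ is a maximizer.

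I do not anticipate a serious obstacle here, since this is a textbook application of the direct method; the only point demanding a little care is the weak-star metrizability (or rather sequential argument) issue. Strictly speaking, weak-star compactness of the dual unit ball is sequential compactness precisely because $L^1(\Omega)$ is separable, so the sequential extraction above is legitimate and one need not invoke nets. The remaining subtlety worth a sentence is that the supremum defining $J$ is an \emph{infimum} of continuous functionals and hence only upper semicontinuous (not continuous), which is exactly the right direction for a maximization problem — one gets $J(a^*)\geq\limsup J(a_k)$ rather than equality, and that inequality suffices. No uniqueness is claimed at this stage, which is consistent with the fact that genuine uniqueness and the recovery of a characteristic function as maximizer require the finer spectral assumptions $\Hun$ and $\Hdeux$ invoked later in Theorem \ref{mainTheo}.
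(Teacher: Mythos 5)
Your proof is correct and follows essentially the same route as the paper: weak-star compactness of $\overline{\mathcal{U}}_L$ combined with upper semicontinuity of $J$ as an infimum of weak-star continuous linear functionals. The paper states this in three lines (invoking the topological Weierstrass argument directly rather than extracting a subsequence), so your version merely fills in the standard details — Banach--Alaoglu, closedness of the constraints, and sequential metrizability via separability of $L^1(\Omega)$ — all of which are accurate.
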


\begin{proof}[Proof of Lemma \ref{propExistRelax}]
For every $j\in\N^*$, the functional $a\in\overline{\mathcal{U}}_L\mapsto \gamma_j \int_{\Omega}a(x) \vert\phi_j(x)\vert^2\, dx$ is linear and continuous for the weak star topology of $L^\infty$. Hence $J$ is upper semicontinuous as the infimum of continuous linear functionals. Since $\overline{\mathcal{U}}_L$ is compact for the weak star topology of $L^\infty$, the lemma follows.
\end{proof}

Note that, obviously,
$$
\sup_{\chi_\omega\in {\mathcal{U}}_L} J(\chi_\omega) \leq
\sup_{a\in \overline{\mathcal{U}}_L} J(a) = J(a^*).
$$
But, in fact, from Theorem \ref{mainTheo} (and from its proof) we deduce that the two suprema coincide,  and that the problem \eqref{defpb} and the relaxed problem \eqref{defJa} have the same (unique) solution. This means two things. First, there is no gap between the optimal values of the problem \eqref{defpb} and its relaxed formulation \eqref{defJa}. A similar result was established   in \cite{PTZobsND} for wave and Schr\"odinger like equations under spectral assumptions on the domain $\Omega$. But, in contrast to these hyperbolic equations where relaxation occurs except for some very distinguished discrete values of $L$, here, in the parabolic setting, relaxation does not occur, at least under the assumption $\Hdeux$, which is fulfilled for the Dirichlet-Laplacian for piecewise $C^1$ domains $\Omega$ (see Theorem \ref{propLBSC1} further).

In particular, in the parabolic setting, contrarily to what happens in wave-like equations, the constant function $a=L$ is not an optimal solution.
Note that this constant function corresponds intuitively (at the weak limit) to equi-distribute the sensors over the domain $\Omega$. This strategy is however not optimal for parabolic problems.
\end{remark}

\begin{remark}\label{rem4}
The assumption $\Hdeux$ can be actually weakened (as can be easily seen from the proof of the theorem). To ensure that the conclusion of Theorem \ref{mainTheo} holds, it is sufficient to assume that
\begin{equation}\label{LBSC_weakened}
\liminf_{j\rightarrow+\infty}\ \gamma_j(T)\int_{\Omega} a^*(x)\vert\phi_j(x)\vert^2\, dx >\gamma_1(T),
\end{equation}
where $a^*\in\overline{\mathcal{U}}_L$ is any optimal solution of the relaxed problem \eqref{defJa}. In other words, it is sufficient to restrict the assumption $\Hdeux$ to the sole $a^*$. Note that such an assumption is impossible to check since $a^*$ is not known a priori, but this remark will however be useful in Section \ref{ex4}.

Note that, since $J(L)=L\gamma_1$, it follows that $J(a^*)\geq L\gamma_1$, and hence in particular there always holds
$$
\liminf_{j\rightarrow+\infty}\ \gamma_j(T)\int_{\Omega} a^*(x)\vert\phi_j(x)\vert^2\, dx \geq L\gamma_1(T).
$$
\end{remark}

\begin{remark}
The existence and uniqueness of an optimal set, stated in Theorem \ref{mainTheo}, holds true for any Hilbert basis of eigenfunctions of $A_0$ as soon as this basis satisfies the assumptions $\Hun$, $\Hdeux$ and $\Htrois$. However the optimal set $\omega^*$ may depend on the specific choice of the basis.
\end{remark}

\begin{remark}\label{rem9}
As noted before, the issue of solving the optimal design problem
$$
\sup_{\chi_{\omega}\in\mathcal{U}_{L}}C_{T}(\chi_{\omega})
$$
where $C_{T}(\chi_{\omega})$ is the observability constant of the parabolic equation \eqref{heatEq} defined by \eqref{defCT}, is natural and interesting, although this problem is very difficult to handle from the theoretical point of view, even for the truncated criterion, and not as much relevant as the one we consider here, from the practical point of view (as already discussed).

 Note that the truncated version of the criterion $C_{T,\textnormal{rand}}(\chi_{\omega})$ is the lowest eigenvalue of the diagonal matrix $\textrm{diag}\left(\gamma_{j}(T)\int_{\omega}\vert\phi_{j}(x)\vert^2\, dx\right)_{1\leq j\leq N}$, whereas the truncated version $C_{T,N}(\chi_{\omega})$ of the criterion $C_{T}(\chi_{\omega})$ is the lowest eigenvalue of the Gramian matrix
\begin{equation}\label{defdGTN}
G_{T,N}(\chi_\omega) = \left(\frac{e^{(\lambda_{j}+\bar{\lambda}_k)T}-1}{\lambda_{j}+\bar{\lambda}_k}\int_{\omega}\phi_{j}(x)\overline{\phi}_k(x)\, dx\right)_{1\leq j,k\leq N},
\end{equation}
which is the truncation of the Gramian $G_T(\chi_\omega)$ defined by \eqref{defGT}.
Under the conditions of Theorem \ref{mainTheo}, the sequence of the minimizers over $\mathcal{U}_{L}$ of the truncated version of the randomized constant $C_{T,\textnormal{rand}}(\chi_{\omega})$ is stationary. An interesting problem consists of investigating theoretically or numerically whether this stationarity property holds true or not for the truncated version $C_{T,N}(\chi_{\omega})$ of the observability constant $C_{T}(\chi_{\omega})$. 

Notice that, extending the definition of $C_{T}(\chi_{\omega})$ to the functions $a\in L^\infty(\Omega;[0,1])$ by 
\begin{equation*}\label{defCTa}
C_T(a)=\inf\left\{ \frac{ \int_0^T\int_\Omega a(x)|y(t,x)|^2\,dx \, dt }{\Vert y(T,\cdot)\Vert^2_{L^2(\Omega)}} \ \big\vert\  y^0\in D(A_{0}) \setminus\{0\} \right\},
\end{equation*}
one gets easily that the optimal design problem of maximizing $C_{T}(a)$ over $\overline{\mathcal{U}}_L$ has at least one solution. Furthermore, it is interesting to note that, by adapting the proof of \cite[Proposition 2]{PTZObs1}, we get the following partial result.

\begin{lemma}
For every $L\in (0,1)$ and every $T>0$, the constant function $\overline{a}(\cdot)=L$ is not a maximizer of the functional $a\mapsto C_{T}(a)$ over $\overline{\mathcal{U}}_L$.
\end{lemma}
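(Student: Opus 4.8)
The plan is to show directly that the constant weight $\bar a\equiv L$ can be strictly beaten by an admissible perturbation, exploiting the concavity of $a\mapsto C_T(a)$, which is an infimum of the linear functionals $Q(a,y^0)=\|y(T,\cdot)\|^{-2}_{L^2(\Omega)}\int_0^T\int_\Omega a(x)|y(t,x)|^2\,dx\,dt$, so that $C_T(a)=\inf_{y^0}Q(a,y^0)$. First I would compute the baseline value. Expanding $y(t,\cdot)=\sum_j a_je^{-\lambda_j t}\phi_j$ and using the orthonormality of $(\phi_j)$, the Gramian $G_T(\mathbf 1_\Omega)$ is diagonal with entries $\gamma_j(T)$, so that with the change of variable $b_j=a_je^{-\lambda_jT}$,
$$C_T(\bar a)=L\,\inf_{\sum_j|b_j|^2=1}\sum_{j}\gamma_j(T)|b_j|^2=L\gamma_1(T),$$
the infimum being attained (up to scaling) exactly at $y^0=\phi_1$, since $r\mapsto(e^{2rT}-1)/(2r)$ is increasing and the first eigenvalue has strictly smallest real part, whence $\gamma_1(T)<\gamma_j(T)$ for $j\geq 2$.

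Next I would choose the perturbation direction. Since $\phi_1$ is a nonconstant eigenfunction, $|\phi_1|^2$ is not a.e.\ constant, so there exist disjoint sets $E_\pm$ of positive measure and a level $m$ with $|\phi_1|^2>m$ on $E_+$ and $|\phi_1|^2<m$ on $E_-$. I set $h=\chi_{E_+}-\tfrac{|E_+|}{|E_-|}\chi_{E_-}$, which has zero average (so $\bar a+\varepsilon h\in\overline{\mathcal U}_L$), is bounded (so $\bar a+\varepsilon h\in L^\infty(\Omega;[0,1])$ for $\varepsilon$ small because $L\in(0,1)$), and satisfies $I_h:=\int_\Omega h\,|\phi_1|^2>0$. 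The goal is then to prove $C_T(\bar a+\varepsilon h)>L\gamma_1(T)$ for all sufficiently small $\varepsilon>0$.

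The heart of the argument is a uniform lower bound on $Q(\bar a+\varepsilon h,y^0)=Q(\bar a,y^0)+\varepsilon\|y(T,\cdot)\|^{-2}\int_0^T\int_\Omega h|y|^2$, obtained by splitting the admissible $y^0$ (normalized by $\|y(T,\cdot)\|=1$, so $\sum_j|b_j|^2=1$) into two regimes according to a threshold $\delta>0$. In the \emph{far} regime $Q(\bar a,y^0)\geq L\gamma_1+\delta$, the pointwise bound $|\int_0^T\int_\Omega h|y|^2|\leq\|h\|_\infty\int_0^T\int_\Omega|y|^2=\tfrac{\|h\|_\infty}{L}Q(\bar a,y^0)$ yields $Q(\bar a+\varepsilon h,y^0)\geq Q(\bar a,y^0)\bigl(1-\varepsilon\|h\|_\infty/L\bigr)>L\gamma_1$ for $\varepsilon$ small. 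In the \emph{near} regime $Q(\bar a,y^0)\leq L\gamma_1+\delta$, the spectral gap forces $\sum_{j\geq2}|b_j|^2\gamma_j\leq O(\delta)$ and $|b_1|^2\gamma_1\geq(1-O(\delta))\gamma_1$; writing $y=y_1+r$ with $y_1=b_1e^{\lambda_1 t}\phi_1$, the leading term of $\int_0^T\int_\Omega h|y|^2$ equals $|b_1|^2\gamma_1 I_h>0$, while $\int_0^T\int_\Omega h|r|^2$ is controlled by $\|h\|_\infty\sum_{j\geq2}|b_j|^2\gamma_j=O(\delta)$ and the cross term, estimated by the space--time Cauchy--Schwarz inequality, is bounded by $2\|h\|_\infty(|b_1|^2\gamma_1)^{1/2}\bigl(\sum_{j\geq2}|b_j|^2\gamma_j\bigr)^{1/2}=O(\sqrt\delta)$. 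Choosing $\delta$ small makes $\int_0^T\int_\Omega h|y|^2\geq\tfrac12\gamma_1 I_h>0$ uniformly, so that $Q(\bar a+\varepsilon h,y^0)\geq L\gamma_1+\tfrac{\varepsilon}{2}\gamma_1 I_h>L\gamma_1$ here as well. Taking the infimum over $y^0$ gives $C_T(\bar a+\varepsilon h)>L\gamma_1=C_T(\bar a)$, so $\bar a$ is not a maximizer.

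The main obstacle is precisely this uniform control in the near regime: since the weights $\gamma_j(T)$ blow up with $j$, a mode-by-mode expansion of the interaction terms is not summable, and the decisive step is to estimate the cross term by the space--time Cauchy--Schwarz inequality rather than term by term, which replaces the uncontrolled quantity $\sum_{j\geq2}|b_j|\sqrt{\gamma_j}$ by the small quantity $\bigl(\sum_{j\geq2}|b_j|^2\gamma_j\bigr)^{1/2}$. A secondary point, needed for the clean value $C_T(\bar a)=L\gamma_1$ and for the uniqueness of the minimizer, is the strict spectral gap $\gamma_1(T)<\gamma_2(T)$; in the degenerate case where the dominant eigenvalue has a multi-dimensional eigenspace, one argues on that finite-dimensional eigenspace and selects $h$ so that $\int_0^T\int_\Omega h|y|^2>0$ for every $y^0$ in it.
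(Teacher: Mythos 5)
Your argument is necessarily a different route from the paper's, because the paper does not actually prove this lemma: it only asserts that it follows ``by adapting the proof of \cite[Proposition 2]{PTZObs1}'', i.e.\ the analogous statement for the one-dimensional wave equation. Your self-contained proof is, in its main line, correct, and it exploits the parabolic structure in a way the cited wave-equation argument cannot: since the weights $\gamma_j(T)$ increase strictly with $\Real(\lambda_j)$, at $\bar a\equiv L$ one has $C_T(\bar a)=L\gamma_1(T)$ with only the bottom of the spectrum active, so a first-order perturbation in a single well-chosen direction $h$ suffices. In the wave setting all weights coincide, every mode is active at $\bar a$, and no such single-mode argument is available, which is what makes the adaptation invoked by the paper nontrivial and your route more transparent here. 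Your two-regime estimate is sound: the far regime is handled by the crude bound $\vert\int_0^T\int_\Omega h\vert y\vert^2\vert\leq(\Vert h\Vert_\infty/L)\,Q(\bar a,y^0)$, and in the near regime the space--time Cauchy--Schwarz bound on the cross term, $2\Vert h\Vert_\infty(\vert b_1\vert^2\gamma_1)^{1/2}\bigl(\sum_{j\geq2}\gamma_j\vert b_j\vert^2\bigr)^{1/2}=O(\sqrt{\delta})$, is exactly the right device to avoid the non-summable mode-by-mode expansion; the order of choices ($h$, then $\delta$, then $\varepsilon$) is consistent and yields the quantitative conclusion $C_T(\bar a+\varepsilon h)\geq L\gamma_1(T)+c\,\varepsilon$.

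There is, however, a genuine gap in the degenerate case, which is within the scope of the lemma. Your main argument requires the strict gap $\gamma_1(T)<\gamma_2(T)$, i.e.\ $\Real(\lambda_1)<\Real(\lambda_2)$, whereas the framework only assumes $\Real(\lambda_1)\leq\Real(\lambda_2)$, and the paper's own examples contain operators whose lowest eigenvalue is multiple: for the Neumann-Laplacian of Remark \ref{rem_Neumann} on the square, the lowest eigenvalue is double, with eigenfunctions proportional to $\cos x_1$ and $\cos x_2$. Your closing sentence (``select $h$ so that $\int_0^T\int_\Omega h\vert y\vert^2>0$ for every $y^0$ in the eigenspace'') names the right statement but treats it as a routine selection, which it is not: one needs a zero-average bounded $h$ making an entire Hermitian form, with off-diagonal entries $\int_0^Te^{(\lambda_j+\bar{\lambda}_k)(T-t)}dt\int_\Omega h\,\phi_j\overline{\phi}_k\,dx$, positive definite on the first eigenspace. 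This does not follow from $\Hun$, which constrains only diagonal combinations $\sum\alpha_j\vert\phi_j\vert^2$ of the basis functions. The existence of such an $h$ can be obtained by a separation argument --- it exists if and only if the constant function does not lie in the convex hull of $\{\int_0^T\vert y(t,\cdot)\vert^2dt:\ y^0\in E_1,\ \Vert y(T,\cdot)\Vert_{L^2(\Omega)}=1\}$, and this can be checked under $\Htrois$ together with Dirichlet-type vanishing at the boundary, or by direct computation in the Neumann square --- but that step is missing, and it is precisely where the hypotheses on the eigenfunctions must enter. A second, minor, instance of the same issue occurs at the start: ``$\phi_1$ nonconstant $\Rightarrow\vert\phi_1\vert^2$ not a.e.\ constant'' is false for general complex-valued eigenfunctions and should instead be deduced from $\Hun$ with $N=1$.
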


\end{remark}

\begin{remark}\label{rem_largetime}
Finally, let us comment on the role of the time $T$. Recall that $T>0$ has been arbitrarily fixed at the beginning of the analysis. Its role is in the weights $\gamma_j(T)$ coming into play in the definition of the functional $J$ (defined by \eqref{defJ}). If the eigenvalues are such that $\Real(\lambda_j)\rightarrow+\infty$, then the larger $T$ is, and the quicker the weights tend to $+\infty$.
As a consequence, as stated in Theorem \ref{mainTheo}, the integer $N_0(T)$ decreases as $T$ increases, and if $T$ is large enough then $N_0(T)=1$.
This says that, if one can observe the solutions of the equation over a large enough horizon of time, then the optimal observation domain can be designed from the first mode only. This fact is in accordance with the strong damping properties of a parabolic equation, at least, under the assumption $\Hdeux$. In large time the energy of the solutions is essentially carried by the first mode.
\end{remark}

\subsection{Application to the Stokes equation in the unit disk}\label{sec_Stokes}
In this section, we assume that $\Omega=\{ x\in\R^2\ \vert\ \Vert x\Vert< 1\}$ is the Euclidean unit disk of $\R^2$, and we consider the Stokes equation \eqref{Stokesequation} in the unit disk of $\R^2$, with Dirichlet boundary conditions.

Note that the Stokes system does not exactly enter in the framework defined in Section \ref{sec_poser}, but it suffices to make the following very slight modification.
The Stokes operator $A_0:D(A_0)\rightarrow H$ is defined by $A_0=-\mathcal{P}\triangle$, with $D(A_0)=\{ y\in V \ \vert\ A_{0}y\in H \}$, 
$V = \{ y\in (H^1_0(\Omega))^2\ \vert\ \mathrm{div}\, y=0 \}$,
$H = \{ y\in (L^2(\Omega))^2\ \vert\ \mathrm{div}\, y=0 ,\ y_{\vert\partial\Omega}.n=0  \}$,
and $\mathcal{P}:(L^2(\Omega))^2 = H\overset{\perp}{\oplus}H^\perp\rightarrow H$ is the Leray projection. 
Then $A_0$ is an unbounded operator in the Hilbert space $H$ (and not on $L^2$), endowed with the $L^2$-norm (see \cite{Boyer}).

We consider here the Hilbert basis of $H$ of eigenfunctions, indexed by $j\in\Z$, $k\in\N^*$ and $m=1,2$, defined by 
\begin{equation}\label{Stokes_eig0}
\phi_{0,k}(r,\theta) = \frac{-J_0'(\sqrt{\lambda_{0,k}} r) }{ \sqrt{\pi} \sqrt{\lambda_{0,k}} \vert J_0(\sqrt{\lambda_{0,k}})\vert }  \begin{pmatrix} -\sin\theta \\ \cos\theta \end{pmatrix} ,
\end{equation}
and
\begin{equation}\label{Stokes_eig1}
\begin{split}
\phi_{j,k,m}(r,\theta) \ =\ &
\frac{J_j(\sqrt{\lambda_{j,k}} r)-J_j(\sqrt{\lambda_{j,k}})r^j}{\lambda_{j,k}\vert J_j(\sqrt{\lambda_{j,k}})\vert r} j (-1)^{m+1} Y_{j,m}(\theta) \begin{pmatrix} \cos\theta \\ \sin\theta \end{pmatrix} \\
& +
\frac{-\sqrt{\lambda_{j,k}}J_j'(\sqrt{\lambda_{j,k}} r) + j J_j(\sqrt{\lambda_{j,k}}) r^{j-1} }{ \lambda_{j,k} \vert J_j(\sqrt{\lambda_{j,k}})\vert } Y_{j,m+1}(\theta) \begin{pmatrix} -\sin\theta \\ \cos\theta \end{pmatrix}
\end{split}
\end{equation}
whenever $j\neq 0$, where $(r,\theta)$ are the usual polar coordinates (see \cite{Kelliher,LeeRummler}).
The functions $Y_{j,m}(\theta)$ are defined by $Y_{j,1}(\theta)=\frac{1}{\sqrt{\pi}}\cos(j\theta)$ and $Y_{j,2}(\theta)=\frac{1}{\sqrt{\pi}}\sin(j\theta)$, with the agreement that $Y_{j,3}=Y_{j,1}$, and $J_j$ is the Bessel function of the first kind of order $j$. Denoting by $z_{j,k}>0$ is the $k^\textrm{th}$ positive zero of $J_{j}$, the eigenvalues of $A_{0}$ are the doubly indexed sequence $(-\lambda_{j,k})_{j\in\Z,k\in\N^*}$, where $\lambda_{j,k}=z_{\vert j\vert+1,k}^{2}$ is of multiplicity $1$ if $j=0$, and $2$ if $j\neq 0$. 

Note that, in $\Hun$, $\Hdeux$, and in the definition \eqref{defJ} of the functional $J$, we replace $\vert\cdot\vert$ with the Euclidean norm of $\R^2$.

\begin{theorem}\label{theo:stokes}
The assumptions $\Hun$, $\Hdeux$ and $\Htrois$ are satisfied. Then Theorem \ref{mainTheo} implies that there exists a unique optimal observation domain $\omega^*$ (solution of the problem \eqref{defpb}), which is moreover open and semi-analytic.
\end{theorem}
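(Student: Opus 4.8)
The plan is to verify that the three spectral assumptions $\Hun$, $\Hdeux$ and $\Htrois$ hold for the Stokes eigenbasis given by \eqref{Stokes_eig0}--\eqref{Stokes_eig1} (relabelled as a single sequence $(\phi_j)_{j\in\N^*}$ ordered by nondecreasing eigenvalue $\lambda_j$), after which the conclusion is an immediate application of Theorem \ref{mainTheo}. Two of the three assumptions are comparatively soft and will follow from analyticity together with the Dirichlet boundary condition; the genuine difficulty, as usual in this circle of problems, is the non-concentration estimate $\Hdeux$.

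For $\Htrois$, I would argue that the Stokes operator $A_0=-\mathcal{P}\triangle$ is an elliptic system with constant (hence real-analytic) coefficients, so that by analytic hypoellipticity its eigenfunctions are real-analytic in the interior $\Omega$; alternatively, one reads analyticity directly off the explicit formulas, since $J_j$ is entire, the apparent singularities at $r=0$ in \eqref{Stokes_eig1} are removable, and once re-expressed in Cartesian coordinates the resulting vector fields are real-analytic on $\Omega$. Granting $\Htrois$, assumption $\Hun$ follows as in Remark \ref{remvanish} from the fact that the eigenfunctions vanish on $\partial\Omega$ (they belong to $(H^1_0(\Omega))^2$ and are smooth up to the boundary of the smooth disk). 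Indeed, if $\sum_{j=1}^N\alpha_j|\phi_j|^2=C$ almost everywhere on a set $E$ of positive measure with $\alpha_j\geq 0$, then the real-analytic function $\sum_{j=1}^N\alpha_j|\phi_j|^2-C$ vanishes on $E$ and hence on all of the connected set $\Omega$ (the zero set of a nontrivial real-analytic function has zero measure); letting $x\to\partial\Omega$ forces $C=0$, and then $\sum_j\alpha_j|\phi_j|^2\equiv 0$ with $\alpha_j\geq 0$ and $\phi_j\not\equiv 0$ forces $\alpha_j=0$ for every $j$.

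The core of the proof is $\Hdeux$. Since the Stokes eigenvalues $\lambda_j$ are real and positive with $\lambda_j\to+\infty$, one has $\gamma_j(T)=\frac{e^{2\lambda_jT}-1}{2\lambda_j}$, which grows like $e^{2\lambda_jT}$. I would therefore reduce $\Hdeux$ to a uniform lower bound on the spectral quantities: it suffices to produce constants $C,D>0$, depending on $L$ and $\Omega$ but not on $j$ or on $a$, such that
\begin{equation*}
\int_\Omega a(x)|\phi_j(x)|^2\,dx\ \geq\ C\,e^{-D\sqrt{\lambda_j}}
\end{equation*}
for every $a\in\overline{\mathcal{U}}_L$. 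Granting this, $\gamma_j(T)\int_\Omega a|\phi_j|^2\geq \frac{C}{2\lambda_j}\bigl(e^{2\lambda_jT}-1\bigr)e^{-D\sqrt{\lambda_j}}\to+\infty$ because $2\lambda_jT-D\sqrt{\lambda_j}\to+\infty$, so that the $\liminf$ equals $+\infty$, which is a fortiori larger than $\gamma_1(T)$.

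To obtain the displayed lower bound I would first reduce from a general density $a$ to a characteristic function by a bathtub argument: taking the threshold $t=L/2$ one has $|\{a\geq L/2\}|\geq (L/2)|\Omega|$, whence $\int_\Omega a|\phi_j|^2\geq \frac{L}{2}\int_{\{a\geq L/2\}}|\phi_j|^2$, and it remains to bound $\int_\omega|\phi_j|^2$ from below uniformly over all measurable $\omega$ with $|\omega|\geq (L/2)|\Omega|$. For the scalar Dirichlet-Laplacian this is exactly the uniform spectral lower bound of \cite{AEWZ} used for the heat equation; for the Stokes system I would establish the analogous estimate using the explicit Bessel-function structure of the eigenfunctions, combined with the fine asymptotics of Bessel functions and the quantum-limit (semiclassical measure) arguments developed for the anomalous diffusion equation on the disk in Section \ref{ex4}. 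The main obstacle is precisely this uniform Bessel lower bound: the eigenfunctions may concentrate near $\partial\Omega$ (whispering-gallery behaviour), so one must rule out faster-than-$e^{-D\sqrt{\lambda_j}}$ decay of $\int_\omega|\phi_j|^2$ uniformly in $\omega$ of prescribed measure, which is the technically heaviest step and is where the detailed estimates borrowed from the anomalous-diffusion analysis are needed.
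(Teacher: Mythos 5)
Your overall scaffolding (verify $\Hun$, $\Hdeux$, $\Htrois$, then invoke Theorem \ref{mainTheo}) is the right one, and your arguments for $\Htrois$ and $\Hun$ (interior analyticity plus the Dirichlet condition, as in Remark \ref{remvanish}) are fine and match what the paper treats as clear. The gap is at the central step, $\Hdeux$. You reduce it to the lower bound $\int_\Omega a\vert\phi_j(x)\vert^2\,dx\geq C e^{-D\sqrt{\lambda_j}}$ with $C,D$ independent of $a\in\overline{\mathcal{U}}_L$, i.e.\ to an analogue for the Stokes system of the spectral inequality of \cite{AEWZ}. No such inequality is available: the result of \cite{AEWZ} concerns the scalar Dirichlet-Laplacian (Carleman-type propagation of smallness), and this paper explicitly lists its extension to the Stokes spectrum among the open problems (Section \ref{sec_ccl}, paragraph ``On the concentration of eigenfunctions''). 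Your plan to recover it from the Bessel structure ``combined with the quantum-limit arguments of Section \ref{ex4}'' cannot close this gap: those arguments rest on \emph{vague} convergence of $R_{j,k}(r)^2r\,dr$ to the measures \eqref{mesure_Burq}, so they only apply after testing against a fixed smooth approximation of a fixed density, with errors controlled by the $L^{3/2}$ bounds of Lemmas \ref{lem_limitsBurq} and \ref{lem_RjkBurq} and with constants depending on that density (see Lemma \ref{lem_activeindices}); they can never yield a bound uniform over all measurable sets of prescribed measure. Worse, for whispering-gallery sequences the limit measure is singular and carried by $\partial\Omega$, so vague convergence gives no lower bound at all on $\int_\Omega a\vert\phi_j\vert^2$ for a general fixed $a$, and a separate argument is needed precisely for those indices. (Note also that $\Hdeux$ only requires an estimate for each fixed $a$, so the uniformity you ask for is even stronger than what is needed, and strictly harder than what is known.)

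The paper's proof is organized exactly so as to avoid the estimate you postulate. It first shows, by adapting Lemma \ref{lem6}, that the Stokes problem is equivalent to a radial problem, so that only radial densities $b$ need to be handled ($\Hdeux$ is then established in its radial form). For each fixed radial $b$, the divergence \eqref{15:17} is proved by (i) Bessel integral identities bounding the Stokes radial quantities $\frac{j^2}{r^2}f_{j,k}(r)^2+f_{j,k}'(r)^2$ from below in terms of the scalar disk eigenfunctions $R_{j+1,k}$ (estimate \eqref{eq:fjk}), and (ii) the dichotomy of Lemma \ref{lemma:tech_ineq}: for whispering-gallery-type indices, i.e.\ $r^1_{j,k}\geq\frac{\alpha+\beta}{2}$, one uses the monotonicity of $J_j$ and the small-argument behavior $J_j(x)\sim x^j/(2^jj!)$ to get a pointwise lower bound on a fixed annulus, a factorial decay which is overwhelmed by the exponential growth of $\gamma_{j,k}(T)$ (since $\lambda_{j,k}=z_{j,k}^2>j^2$); for the remaining indices one uses the absolutely continuous quantum limits $f_s$ together with the H\"older/$L^{3/2}$ smooth-approximation scheme. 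Both steps exploit that the test density is radial (the sets involved contain annuli); your bathtub reduction discards precisely this structure and leaves you facing arbitrary measurable sets, which is the open problem above. To make your route rigorous you would either have to prove the AEWZ-type inequality for Stokes eigenfunctions (a substantial new result), or, as the paper does, symmetrize first and then carry out the per-density analysis.
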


This result is proved in Section \ref{Sec:proofstokes}.
The proof is technically based on the explicit form of the basis of eigenfunctions under consideration, and we did not investigate what can happen in higher dimension.
Also, what can happen for more general domains is not known.



\subsection{Application to anomalous diffusion equations}\label{sec_anomalous}
In this section, we assume that $\Omega$ is Lipschitz, and we consider the Dirichlet-Laplacian $\triangle$ defined on its domain $D(\triangle)=\{ y\in H^1_0(\Omega)\ \vert\ \triangle y\in L^2(\Omega) \}$. 
Note that if $\partial\Omega$ is $C^2$ then $D(\triangle)= H^1_0(\Omega)\cap H^2(\Omega)$.

We set $A_0=(-\triangle)^\alpha$ (where $\triangle$ is the Dirichlet-Laplacian), with $\alpha>0$ arbitrary, defined spectrally, based on the spectral decomposition of the Dirichlet-Laplacian. 
This case corresponds to the anomalous diffusion equation \eqref{anomalouseq}. 

To be more precise with the functional framework, the domain of the operator $A_0$, as an unbounded operator in $L^2(\Omega)$, is defined as follows. If $\alpha\in(0,1)\setminus\{1/4\}$, then $D(A_0)=H_0^{2\alpha}(\Omega)$;  if $\alpha=1/4$ then $D(A_0)=H_{00}^{1/2}(\Omega)$ (Lions-Magenes space), and if $1/4<\alpha<1$ then $ D(A_0)= H^1_0(\Omega)\cap H^{2s}(\Omega)$ (see \cite{LM} or \cite[Appendix]{BT}).

For $\alpha >1$ the operator is defined by composing integer powers of $-\triangle$ with the fractional powers above.
For instance one can take $A_{0}=(-\triangle)^2$ with the boundary conditions 
$y_{\vert\partial\Omega}=\triangle y_{\vert\partial\Omega}=0$: in that case \eqref{heatEq} corresponds to a linearized model of Cahn-Hilliard type.

In the general case $\alpha>0$, the equation \eqref{heatEq} models a physical process exhibiting anomalous diffusion (see for instance \cite{Metzler,Miller,Sokolov}).
Of course if $\alpha=1$ then \eqref{heatEq} is the heat equation with Dirichlet boundary conditions, as overviewed in the introduction.

Note that the eigenfunctions of $A_0$ are those of the Dirichlet-Laplacian, and therefore the assumptions $\Hun$ and $\Htrois$ are satisfied. Only the assumption $\Hdeux$ has to be discussed in the sequel.
We have the following three results.

\subsubsection{A general result}

\begin{theorem}\label{propLBSC1}
Assume that $\partial\Omega$ is piecewise\footnote{Actually a more general assumption can be done: $\Omega$ is Lipschitz and locally star-shaped (see \cite{AEWZ} for the definition and details).} $C^1$. If $\alpha>1/2$, then the assumption $\Hdeux$ is satisfied for any Hilbert basis of eigenfunctions of $A_0$.
For $\alpha=1/2$, the conclusion holds true as well provided that $T$ is moreover large enough.

Under these conditions, Theorem \ref{mainTheo} can be applied and implies that there exists a unique optimal set $\omega^*$ (solution of the problem \eqref{defpb}), which is open and semi-analytic.
\end{theorem}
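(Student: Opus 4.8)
The plan is to observe that, as already noted just before the statement, the eigenfunctions of $A_0=(-\triangle)^\alpha$ are exactly those of the Dirichlet-Laplacian, so $\Hun$ and $\Htrois$ hold automatically and the entire burden is to verify $\Hdeux$; once this is done, the conclusion (existence, uniqueness, openness and semi-analyticity of $\omega^*$) is a direct application of Theorem \ref{mainTheo}. Write $\mu_j$ for the $j$-th Dirichlet-Laplacian eigenvalue, so that $\lambda_j=\mu_j^\alpha>0$ and, since $\Omega$ is bounded, $\mu_j\to+\infty$. With these real positive eigenvalues the weights read $\gamma_j(T)=\frac{e^{2\mu_j^\alpha T}-1}{2\mu_j^\alpha}$. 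I would prove the strong statement that, under the hypotheses, the liminf in $\Hdeux$ is in fact $+\infty$ (which is of course $>\gamma_1(T)$), and I would extract the relevant exponential decay from the uniform lower bound of \cite{AEWZ}.

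First I would reduce an arbitrary density $a\in L^\infty(\Omega;[0,1])$ with $\int_\Omega a=L|\Omega|$ to the characteristic function of a set of controlled measure. Setting $E=\{x\in\Omega:a(x)\geq L/2\}$, a one-line computation gives $L|\Omega|=\int_\Omega a\leq |E|+\frac{L}{2}(|\Omega|-|E|)$, hence $|E|\geq \frac{L}{2}|\Omega|$. Consequently
\[
\int_\Omega a(x)\,|\phi_j(x)|^2\,dx\ \geq\ \frac{L}{2}\int_E |\phi_j(x)|^2\,dx .
\]
This is the point where the problem becomes uniform in $a$: it suffices to lower bound $\int_E|\phi_j|^2$ over \emph{all} measurable sets $E$ of measure fraction at least $L/2$.

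Next I would invoke the fine spectral estimate of \cite{AEWZ}, valid since $\partial\Omega$ is piecewise $C^1$ (more generally Lipschitz and locally star-shaped): there exist constants $C_1>0$ and $C_2>0$, depending only on $\Omega$ and on the fraction $L/2$ but \emph{not} on $j$ nor on the set, such that $\int_E |\phi_j|^2\,dx\geq C_1\,e^{-C_2\sqrt{\mu_j}}$ for every $j$ and every measurable $E$ with $|E|\geq \frac{L}{2}|\Omega|$. Combining this with $\gamma_j(T)\geq \frac14\,\mu_j^{-\alpha}e^{2\mu_j^\alpha T}$ (valid for $j$ large, since $e^{2\mu_j^\alpha T}-1\geq \frac12 e^{2\mu_j^\alpha T}$ once $\mu_j^\alpha T\geq \frac{\ln 2}{2}$), I obtain, for all large $j$,
\[
\gamma_j(T)\int_\Omega a(x)|\phi_j(x)|^2\,dx\ \geq\ \frac{L\,C_1}{8}\,\frac{\exp\!\big(2T\mu_j^\alpha-C_2\sqrt{\mu_j}\big)}{\mu_j^\alpha}.
\]
It remains to analyze the exponent $2T\mu_j^\alpha-C_2\mu_j^{1/2}$ as $\mu_j\to+\infty$. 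If $\alpha>1/2$, then $\mu_j^\alpha$ dominates $\mu_j^{1/2}$, so this exponent tends to $+\infty$ for \emph{every} $T>0$, and the whole expression blows up; if $\alpha=1/2$, the exponent equals $(2T-C_2)\mu_j^{1/2}$, which tends to $+\infty$ precisely when $T>C_2/2$, i.e. for $T$ large enough. In either case $\liminf_{j\to\infty}\gamma_j(T)\int_\Omega a|\phi_j|^2=+\infty>\gamma_1(T)$, so $\Hdeux$ holds and Theorem \ref{mainTheo} applies.

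The genuinely hard part is entirely packaged in the uniform estimate borrowed from \cite{AEWZ}: the delicate point is that the constants $C_1,C_2$ must be independent of the set $E$ and of the mode $j$ (uniformity over all measurable sets of a given measure), which is exactly what that propagation-of-smallness / spectral inequality machinery provides. Everything else — the superlevel-set reduction and the elementary comparison of the two powers $\mu_j^\alpha$ versus $\mu_j^{1/2}$ in the exponent — is routine, and it is this comparison that pinpoints the threshold $\alpha=1/2$ and explains why at the borderline one additionally needs $T$ large (whereas for $\alpha<1/2$ the exponent degenerates to $-\infty$ and this argument gives nothing, consistently with the more intricate behavior established later for the disk).
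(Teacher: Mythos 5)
Your proposal is correct and follows essentially the same strategy as the paper's proof: reduce a general density $a$ to (a multiple of) the characteristic function of a set of positive measure, invoke the uniform spectral lower bound of \cite{AEWZ}, namely $\int_E\phi_j^2\,dx\gtrsim e^{-C\sqrt{\mu_j}}$, and conclude by comparing the exponents $2T\mu_j^\alpha$ and $C\sqrt{\mu_j}$, which produces the threshold $\alpha=1/2$ (with $T$ large required in the borderline case, which you treat explicitly while the written proof in the paper only mentions $\alpha>1/2$). The only deviation is in how the external estimate is cited: the paper first restricts $E$ to a ball $B(x_0,R)\subset\Omega$ (a technical condition it says is needed to apply \cite{AEWZ}) and obtains the $L^2$ bound from the stated $L^1$ bound via Cauchy--Schwarz, whereas you assert a set-uniform $L^2$ version directly over all $E$ with $\vert E\vert\geq\frac{L}{2}\vert\Omega\vert$; this over-claim is harmless here because $\Hdeux$ is a statement for each fixed $a$, so constants depending on the fixed set $E=\{a\geq L/2\}$ (after intersecting with a suitable ball, as the paper does) suffice.
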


In order to prove that the uniform lower bound assumption $\Hdeux$ holds true, the main ingredient is a lower bound estimate (stated in \cite{AEWZ}) of the spectral quantities $\int_{\omega}\phi_{j}(x)^2\, dx$, which is uniform over measurable subsets $\omega$ of a given measure.

It can be noted that the number $N_0(T)$ of relevant modes needed to compute the optimal set depends on the speed of convergence of $j^{\frac{2\alpha-1}{n}}T$ to $+\infty$ (this follows from the proof of Theorem \ref{propLBSC1}, by using Weyl's asymptotics).

\subsubsection{Case of the $n$-dimensional orthotope}
Assume that $\Omega=(0,\pi)^n$, for $n\in\N^*$. 
We consider the usual Hilbert basis consisting of products of sine eigenfunctions, given by
\begin{equation}\label{basis_orthotope}
\phi_{j_1,\dots,j_n}(x)=\left(\frac{2}{\pi}\right)^{n/2}\prod_{k=1}^n\sin (j_kx_k),
\end{equation}
for all $(j_1,\dots,j_n)\in{\N^*}^n$, with corresponding eigenvalues
$\lambda_{(j_1,\dots,j_n)}=\left(\sum_{k=1}^n j_k^2\right)^\alpha$.

\begin{theorem}\label{propLBSC2}
The assumption $\Hdeux$ is satisfied, whatever the value of $\alpha>0$ may be. Then, Theorem \ref{mainTheo} implies that there exists a unique optimal observation domain $\omega^*$ (solution of the problem \eqref{defpb}), which is open and semi-analytic.
\end{theorem}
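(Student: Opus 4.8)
The plan is to reduce the whole statement to the verification of $\Hdeux$, since $\Hun$ and $\Htrois$ come for free for the sine basis: the eigenfunctions \eqref{basis_orthotope} are products of sines, hence analytic on $\Omega$ (this is $\Htrois$) and vanishing on $\partial\Omega$, so that $\Hun$ follows from the analyticity-plus-vanishing argument recalled in Remark \ref{remvanish}. Once $\Hdeux$ is in hand, Theorem \ref{mainTheo} applies verbatim and delivers the existence, uniqueness, openness and semi-analyticity of $\omega^*$. Thus the entire content is the spectral inequality $\Hdeux$, and in fact I would prove the stronger statement that, for \emph{every} $\alpha>0$ and every admissible density $a\in L^\infty(\Omega;[0,1])$ with $\int_\Omega a=L|\Omega|$,
\[
\gamma_j(T)\int_\Omega a(x)\,|\phi_j(x)|^2\,dx\ \longrightarrow\ +\infty \qquad\text{as } j\to+\infty,
\]
which obviously implies $\liminf_j \gamma_j(T)\int_\Omega a\,|\phi_j|^2>\gamma_1(T)$.

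First I would observe that for every $\alpha>0$ one has $\lambda_j=(\sum_k j_k^2)^\alpha\to+\infty$ as the multi-index $(j_1,\dots,j_n)$ leaves every finite subset of $(\N^*)^n$, so that the weight $\gamma_j(T)=(e^{2\lambda_jT}-1)/(2\lambda_j)$ tends to $+\infty$. Consequently it suffices to bound the spectral integrals $I_j:=\int_\Omega a(x)\,|\phi_j(x)|^2\,dx$ away from zero along every subsequence. Writing $\sin^2 s=\tfrac12(1-\cos 2s)$ and
\[
|\phi_{j_1,\dots,j_n}(x)|^2=\Big(\tfrac{2}{\pi}\Big)^{n}\prod_{k=1}^{n}\sin^2(j_k x_k),
\]
$I_j$ decomposes into the constant term $L$ (the $S=\emptyset$ contribution) plus a sum of Fourier-type coefficients of $a$, each of the form $\int_\Omega a(x)\prod_{k\in S}\cos(2j_k x_k)\,dx$ for nonempty $S\subseteq\{1,\dots,n\}$.

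The key step is an equidistribution/compactness argument. Given any subsequence, I would extract a further subsequence along which each coordinate $j_k$ is either eventually constant (the bounded coordinates, collected in a set $B$, with eventual value $c_k$) or tends to $+\infty$ (the unbounded coordinates, collected in $U$); since $\lambda_j\to+\infty$, necessarily $U\neq\emptyset$. Because the oscillating factor $\prod_{k\in U}\sin^2(j_k x_k)$ converges weakly-$*$ in $L^\infty(\Omega)$ to the constant $2^{-|U|}$ — this being exactly the multidimensional Riemann--Lebesgue lemma applied to the cosine expansion above, since all frequencies indexed by $U$ diverge — passing to the limit gives
\[
I_j\ \longrightarrow\ \Big(\tfrac{2}{\pi}\Big)^{n}2^{-|U|}\int_\Omega a(x)\prod_{k\in B}\sin^2(c_k x_k)\,dx=:I_\infty .
\]
The decisive point is $I_\infty>0$: the zero set of $\prod_{k\in B}\sin^2(c_k x_k)$ is a finite union of hyperplanes, hence Lebesgue-negligible, so this product is positive almost everywhere, and since $a\ge 0$ with $\int_\Omega a=L|\Omega|>0$ the integral is strictly positive. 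Along this sub-subsequence $\gamma_j(T)\to+\infty$ while $I_j\to I_\infty>0$, so $\gamma_j(T)I_j\to+\infty$; as every subsequence admits such a sub-subsequence, the full sequence tends to $+\infty$, which proves $\Hdeux$.

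The main obstacle, and the only genuinely delicate point, is precisely the subsequence extraction: in the ordering by eigenvalue some coordinates may remain bounded while others diverge, so the naive guess $I_j\to L$ is false and must be replaced by the weak-$*$ limit above taken only over the diverging coordinates. Establishing the weak-$*$ convergence of the product of oscillating functions of separated variables (equivalently, the vanishing of $\int_\Omega a(x)\prod_{k\in S}\cos(2j_k x_k)\,dx$ whenever all frequencies in a nonempty $S\subseteq U$ diverge) is a standard but slightly technical use of Riemann--Lebesgue together with Fubini, and the strict positivity $I_\infty>0$ is what plays, for the sine basis, the role that the uniform eigenfunction lower bound plays in Theorem \ref{propLBSC1}. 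It is worth stressing that no such lower bound estimate is needed here, which is exactly why the result holds for every $\alpha>0$ without any largeness assumption on $T$, in sharp contrast with the disk. With $\Hdeux$ verified, the conclusion is immediate from Theorem \ref{mainTheo}.
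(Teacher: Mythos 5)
Your proof is correct, but it takes a genuinely different route from the paper's. For the key step, the verification of $\Hdeux$, the paper proves a \emph{uniform, quantitative} lower bound on the spectral quantities: it invokes the one-dimensional trigonometric inequality $\int_0^\pi \rho(x)\sin^2(jx)\,dx\geq \frac12\bigl(\int_0^\pi\rho-\sin\int_0^\pi\rho\bigr)$ (Lemma \ref{lemmeP}, borrowed from \cite{periago,PTZ_HUM1D}), and then propagates it to dimension $n$ by Fubini and an induction on the coordinates (Lemma \ref{lemmasquare}), obtaining $\int_{(0,\pi)^n}a\,\phi_{j_1,\dots,j_n}^2\geq F^{[n]}(L\pi^n)>0$ uniformly over \emph{all} multi-indices; $\Hdeux$ then follows at once from $\gamma_j(T)\to+\infty$. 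You instead give a soft compactness argument: extract, from any subsequence of multi-indices, a sub-subsequence along which each coordinate is constant or diverges, and use the multidimensional Riemann--Lebesgue lemma to identify the limit of $\int_\Omega a\,|\phi_j|^2$ as a strictly positive quantity $I_\infty$ (your handling of the mixed bounded/unbounded coordinates, which defeats the naive guess $I_j\to L$, is the right correction and is carried out correctly). Both arguments are valid and both exploit only $\gamma_j(T)\to+\infty$, which is why they work for every $\alpha>0$. What the paper's approach buys is effectivity: the explicit constant $F^{[n]}(L\pi^n)$ yields the explicit characterization of the stationarity integer $N_0(T)$ stated just after the theorem (the lowest multi-index with $\frac{e^{2\lambda T}-1}{2\lambda}\geq \frac{e^{2\lambda_{(1,\dots,1)}T}-1}{2\lambda_{(1,\dots,1)}F^{[n]}(L\pi^n)}$), which matters for the numerical truncation; your argument, being purely qualitative (no rate, no uniform constant), proves the theorem as stated but cannot produce such a threshold. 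Conversely, your route is more self-contained, since it does not rely on the cited inequality from \cite{periago,PTZ_HUM1D}.
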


Note that it is not clear whether this result is satisfied or not for any Hilbert basis of eigenfunctions, at least for $\alpha<1/2$ (indeed the case $\alpha>1/2$ is solved with Theorem \ref{propLBSC1}).

As shown in the proof of Theorem \ref{propLBSC2}, it can be also noted that the conclusion of Theorem \ref{mainTheo} holds with $N_0(T)$ defined as the lowest multi-index $(j_1,\ldots,j_n)$ (in lexicographical order) such that
$$
\frac{e^{2\lambda_{(j_1,\ldots,j_n)}T}-1}{2\lambda_{(j_1,\dots,j_n)}} \geq \frac{e^{2\lambda_{(1,\ldots,1)}T}-1}{2\lambda_{(1,\ldots,1)} F^{[n]}(L\pi^n)}.
$$
where $F$ is the function defined on $[0,\pi]$ by $F(s)= \frac{1}{\pi}(s-\sin s)$, and $F^{[n]}$ is the composition of $F$ with itself, $n$ times. 

\begin{remark}\label{rem_Neumann}
Note that the result of Theorem \ref{propLBSC2} holds true as well for the Neumann-Laplacian $A_0=-\triangle$ defined on the domain 
$
D(A_0) = 
\{y \in H^2(\Omega,\C)\ \vert\ \int_\Omega y = 0\textrm{ and }\frac{\partial y}{\partial n}=0 \textrm{ on }\partial\Omega
\} ,
$
with the usual Hilbert basis of eigenfunctions consisting of products of cosine functions (it is indeed easy to see that the assumption $\Hdeux$ is satisfied).
Fractional operators can be as well defined out of this Neumann-Laplacian.
The reason to consider the Neumann-Laplacian on functions that are of zero average (which is standard for observability issues) is due to the fact that, if we do not make this restriction then $\lambda_1=0$ (and $\lambda_j>0$ for every $j\geq 2$) and $\phi_1=1/\sqrt{\vert\Omega\vert}$, and $\Hun$ fails. 
%
%
\end{remark}

\subsubsection{Case of the two-dimensional disk}
Assume that $\Omega=\{ x\in\R^2\ \vert\ \Vert x\Vert< 1\}$ is the Euclidean unit disk of $\R^2$. 
We consider the Hilbert basis of eigenfunctions defined by the triply indexed sequence of functions
\begin{equation}\label{basis_disk}
\phi_{j,k,m}(r,\theta) = 
\left\{ \begin{array}{ll}
R_{0,k}(r)/\sqrt{2\pi} & \ \textrm{if}\ j=0,\\
R_{j,k}(r)Y_{j,m}(\theta) & \ \textrm{if}\ j\geq 1,
\end{array} \right.
\end{equation}
for $j\in\N$, $k\in\N^*$ and $m=1,2$, where $(r,\theta)$ are the usual polar coordinates.
The functions $Y_{j,m}(\theta)$ are defined by $Y_{j,1}(\theta)=\frac{1}{\sqrt{\pi}}\cos(j\theta)$ and $Y_{j,2}(\theta)=\frac{1}{\sqrt{\pi}}\sin(j\theta)$, and the functions $R_{j,k}$ are defined by
\begin{equation}\label{def_Rjk}
R_{j,k}(r) = \sqrt{2}\,\frac{J_j(z_{j,k}r)}{\vert J'_{j}(z_{j,k}) \vert} = \frac{J_j(z_{j,k}r)}{\sqrt{\int_0^1 J_j(z_{j,k}r)^2 r\, dr}} ,
\end{equation}
where $J_j$ is the Bessel function of the first kind of order $j$, and $z_{j,k}>0$ is the $k^\textrm{th}$ positive zero of $J_{j}$.
The corresponding eigenvalues consist of a doubly indexed sequence $(-\lambda_{j,k})_{j\in\N,k\in\N^*}$, where $\lambda_{j,k}=z_{j,k}^{2\alpha}$ is of multiplicity $1$ if $j=0$, and $2$ if $j\geq 1$.

\begin{theorem}\label{theoLBSC3}
For every $\alpha>0$, the optimal design problem \eqref{defpb} has a unique solution $\chi_{\omega^*}\in\mathcal{U}_L$, where $\omega^*$ is moreover open and radial. Furthermore:
\begin{itemize}
\item If $\alpha>1/2$ then the assumption $\Hdeux$ is satisfied\footnote{This is a consequence of Theorem \ref{propLBSC1}.} and $\omega^*$ consists of a finite number of concentric rings that are at a positive distance from the boundary. Additionally, we have
$\lim_{j\to +\infty} \int_{\omega^*} \phi_{j,k,m}(x)^2 \, dx = 0$, for every $k\in\N^*$.

\item If $0<\alpha <1/2$, or if $\alpha=1/2$ and $T$ is small enough, then neither the assumption $\Hdeux$ nor its weakened version \eqref{LBSC_weakened} are satisfied. The optimal set $\omega^*$ consists of an infinite number of concentric rings accumulating at the boundary. However the number of connected components of $\omega^*$ intersected with any proper compact subset of $\Omega$ is finite.
\end{itemize}
\end{theorem}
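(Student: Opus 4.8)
The plan is to exploit the rotational symmetry of the disk to reduce the two-dimensional problem to a one-dimensional radial one, and then to treat the regimes $\alpha\geq 1/2$ and $\alpha<1/2$ by completely different means. First I would pass to the relaxed setting $\overline{\mathcal U}_L$ and, given any $a\in\overline{\mathcal U}_L$, introduce its angular average $\bar a(r)=\frac{1}{2\pi}\int_0^{2\pi}a(r,\theta)\,d\theta\in\overline{\mathcal U}_L$. Using that $Y_{j,1}^2+Y_{j,2}^2=1/\pi$ is constant, one gets, for every $j\geq 1$ and every $k$,
\[\int_0^1\bar a(r)R_{j,k}(r)^2 r\,dr=\tfrac12\Big(\int_\Omega a\,\phi_{j,k,1}^2+\int_\Omega a\,\phi_{j,k,2}^2\Big)\geq\min_{m}\int_\Omega a\,\phi_{j,k,m}^2,\]
while for a radial density $\int_\Omega\bar a\,\phi_{j,k,m}^2=\int_0^1\bar a\,R_{j,k}^2\,r\,dr$ for both $m$ (and the $j=0$ modes are left unchanged). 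Since $\gamma_{j,k}(T)$ depends only on $(j,k)$, this gives $J(\bar a)\geq J(a)$, so there is always a radial maximizer and the problem reduces to maximizing $a\mapsto\inf_{j,k}\gamma_{j,k}(T)\int_0^1 a(r)R_{j,k}(r)^2 r\,dr$ over radial densities with $\int_0^1 a\,r\,dr=L/2$. Radiality of the optimal set then follows as soon as uniqueness is established.

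For $\alpha>1/2$ (and $\alpha=1/2$ with $T$ large) I would simply invoke Theorem \ref{propLBSC1} to obtain $\Hdeux$, so that Theorem \ref{mainTheo} applies and yields existence, uniqueness, openness, semi-analyticity and the stationarity of the truncated maximizers. Uniqueness forces the optimal set to coincide with the radial maximizer produced above, and a semi-analytic radial set is a finite union of concentric rings. Since $\Hdeux$ holds, only finitely many modes are active, all of them Dirichlet eigenfunctions vanishing on $\partial\Omega$, so Remark \ref{remark4} gives a positive distance to the boundary. Finally the identity $\lim_{j\to+\infty}\int_{\omega^*}\phi_{j,k,m}^2=0$ (for fixed $k$) I would read off from the fact that, as $j\to\infty$ with $k$ fixed, $\phi_{j,k,m}$ is a whispering-gallery mode whose $L^2$ mass concentrates in a shrinking boundary layer near $r=1$, where $\omega^*$ carries no mass.

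The genuinely hard case is $0<\alpha<1/2$ (and $\alpha=1/2$, $T$ small), where $\Hdeux$ and even its weakened form \eqref{LBSC_weakened} break down, so Theorem \ref{mainTheo} is unavailable and everything must be argued directly on the radial problem. The plan is: (i) develop sharp two-sided estimates and asymptotics for $R_{j,k}(r)^2 r$, separating the oscillatory interior region from the boundary layer, and combine them with the disk's quantum limits (semiclassical measures concentrating on boundary circles) to show that $\Hdeux$ fails because $\gamma_{j,1}(T)\sim e^{2z_{j,1}^{2\alpha}T}/(2z_{j,1}^{2\alpha})$ grows only like $e^{2j^{2\alpha}T}$, whereas a set away from $\partial\Omega$ sees the whispering-gallery modes decay like $e^{-cj}$ — so for $2\alpha<1$ the product tends to $0$, which is exactly the threshold $\alpha=1/2$ (large versus small $T$); (ii) set up the optimality system of the concave relaxed functional through a minimax/duality argument, producing nonnegative multipliers concentrated on the active modes and a switching function $\Phi(r)=\sum_{j,k}\mu_{j,k}\gamma_{j,k}(T)R_{j,k}(r)^2$ whose super-level set is the optimal radial set; (iii) using analyticity of $\Phi$ on every compact subset of $[0,1)$, deduce that the optimum is bang-bang (a genuine characteristic function), is unique, and has finitely many connected components in any compact subset of $\Omega$; and (iv) using the fine Bessel asymptotics near $r=1$, show that $\Phi$ crosses its threshold infinitely often as $r\to 1^-$, forcing infinitely many concentric rings accumulating at the boundary.

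The main obstacle is unquestionably the $\alpha<1/2$ analysis: with no black-box existence theorem available, uniqueness and the bang-bang property must be extracted by hand from concavity together with analyticity, and above all one must prove sharp Bessel estimates uniform in both indices $(j,k)$ and track the precise competition between the exponential growth of $\gamma_{j,k}(T)$ and the boundary concentration of the eigenfunctions. Controlling this competition uniformly — identifying which sequences of modes are asymptotically active and translating their concentration, via quantum limits, into the accumulation of rings at $\partial\Omega$ — is where essentially all the difficulty lies.
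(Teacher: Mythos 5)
Your overall architecture coincides with the paper's own proof: angular averaging to pass to a radial problem, Theorems \ref{propLBSC1} and \ref{mainTheo} for $\alpha>1/2$ (with the whispering-gallery/Portmanteau argument for the vanishing of $\int_{\omega^*}\phi_{j,k,m}^2$), and, for $\alpha<1/2$, failure of $\Hdeux$ via the competition between the sub-exponential growth $e^{2T(2j)^{2\alpha}}$ of the weights and the exponential decay of whispering-gallery modes away from the boundary, followed by a minimax argument (the paper uses Hartung's extension \cite{hartung} of Sion's theorem, exploiting inf-compactness since $\ell^1$-multipliers form a non-compact set) producing a switching function whose analyticity yields the bang-bang property and uniqueness by concavity. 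Up to your step (iii) this is essentially the paper's proof.

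The genuine gap is your step (iv). To get the accumulation of rings at the boundary you propose to show \emph{directly} that the switching function $\Phi$ crosses its threshold infinitely often as $r\to 1^-$. This is unsubstantiated and very likely intractable: the multipliers $\mu_{j,k}$ are not explicit, only whispering-gallery indices (those with $r^1_{j,k}\to 1$) can be active, and all derivative estimates one can prove (Kapteyn-type bounds iterated through the Bessel ODE) hold only on compact sets $[h,1-h]$, i.e.\ precisely \emph{away} from the region where you need the oscillation. The paper concludes indirectly: first it proves (a step missing from your proposal) that the optimal radial density is nontrivial on \emph{every} interval $[1-h,1]$, since otherwise the exponential decay of all modes on $[0,1-h]$ would force $J_r(b^*)=0$; then, if $\omega_r^*$ had finitely many components, this nontriviality would force it to contain a full annulus $(1-\eta,1)$, and Lagnese's lemma \cite[Lemma 3.1]{Lagnese} would give $\inf_{j,k}\int_{\omega_r^*}R_{j,k}(r)^2 r\,dr>0$, hence the weakened assumption \eqref{LBSC_weakened} would hold — contradicting exactly the failure established in your step (i). A second, smaller gap: your claimed analyticity of $\Phi$ "on every compact subset of $[0,1)$" cannot be obtained in the radial framework near $r=0$, because the radial Bessel ODE is singular there (the paper's derivative estimates exclude a neighborhood of $r=0$); to rule out rings accumulating at the center, the paper runs a separate two-dimensional minimax argument with the full switching function $\Psi(x)$, proving its analyticity by iterating $\triangle\phi_{j,k,m}=-z_{j,k}^2\phi_{j,k,m}$ together with Sobolev embeddings.
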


\medskip

Theorems \ref{propLBSC1}, \ref{propLBSC2} and \ref{theoLBSC3} are proved in Section \ref{sec_proofs}.

Theorem \ref{theoLBSC3} is probably the most difficult result of the paper. Its contents contrast with those of Theorem \ref{propLBSC2}. 
Indeed, for instance in the two-dimensional square, the optimal observation domain consists of a finite number of connected components, which are at a positive distance from the boundary (since we are in the Dirichlet case), and this whatever the value of $\alpha>0$ may be. In the two-dimensional disk, we have a similar conclusion for $\alpha>1/2$, but if $\alpha<1/2$ then the optimal domain is much more complex and has an infinite number of connected components.
This surprising result shows that the complexity of the optimal domain $\omega^*$ depends on the geometry of the whole $\Omega$. 

Note that, in Theorem \ref{theoLBSC3}, the result of Theorem \ref{mainTheo} cannot be applied if $\alpha<1/2$. We are however able to prove the existence and the uniqueness of an optimal domain.
The proof relies on a nontrivial minimax argument combined with fine properties of Bessel functions, analyticity considerations, and the use of quantum limits in the disk.

\begin{remark}
For $\alpha>1/2$, the fact that $\liminf_{j+k\to +\infty} \int_{\omega^*} \phi_{j,k,m}(x)^2 \, dx = 0$ in spite of $\Hdeux$ is in contrast with the results given in Theorems \ref{propLBSC1} and \ref{propLBSC2} where this limit was positive. 

At this step the role of the weights $\gamma_{j,k}(T,\alpha)$ must be underlined. Indeed, in the disk there is the well-known whispering gallery phenomenon, according to which a subsequence of the probability measures $\phi_{j,k,m}^2dx$ converges vaguely to the Dirac along the boundary (this property is recalled in a precise way in the proof of Lemma \ref{lemma:tech_ineq} in terms of semi-classical limits). 
Note that the whispering gallery concentration phenomenon is however not strong enough to imply the failure of $\Hdeux$ if $\alpha>1/2$, due to the exponential increase of the coefficients $\gamma_{j,k}(T,\alpha)$ as $j+k$ tends to $+\infty$ (see also Section \ref{sec_LBSC}).

In contrast, if $\alpha \in (0,1/2)$ then the increase of the coefficients $\gamma_{j,k}(T,\alpha)$ is not strong enough, which is in accordance with the fact that $\Hdeux$ fails.
\end{remark}

\begin{remark}\label{rem_strict}
As noted in the inequality \eqref{ineqrand}, there holds $C_T(\chi_\omega)\leq C_{T,\textnormal{rand}}(\chi_\omega)$, for every measurable subset $\omega$ of $\Omega$. The last part of Theorem \ref{mainTheo} states that the inequality is strict for the optimal set $\omega^*$.
Combining this remark with Theorem \ref{propLBSC2}, it is interesting to note the following fact. 

Assume that $\Omega=(0,\pi)^n$, for some $n\in\N^*$, and fix an arbitrary $\alpha\in (0,1/2)$. According to Theorem \ref{propLBSC2}, there exists a unique optimal set, and moreover one has $C_T(\chi_{\omega^*})< C_{T,\textnormal{rand}}(\chi_{\omega^*})$.
According to \cite{MZ1,Miller}, the anomalous diffusion equation \eqref{anomalouseq} is not exactly null controllable for $\alpha<1/2$, and therefore (by duality) $C_T(\chi_{\omega^*})=0$.
Hence, we have here an example where $C_T(\chi_{\omega^*})=0$ whereas $C_{T,\textnormal{rand}}(\chi_{\omega^*})>0$.
\end{remark}

\subsubsection{Several numerical simulations}
We provide hereafter several numerical simulations, illustrating the above results.
The truncated problem of order $N$ is obtained by considering all couples $(j,k)$ such that $j\leq N$ and $k\leq N$. 
The simulations are made with a primal-dual approach combined with an interior point line search filter method\footnote{More precisely, we used the optimization routine IPOPT (see \cite{IPOPT}) combined with the modeling language AMPL (see \cite{AMPL}) on a standard desktop machine.}.

On Figure \ref{figpb2D} (resp., on Figure \ref{figpb2N}), we compute the optimal domain $\omega^N$ for the operator $A_0=-\triangle$, the Dirichlet-Laplacian (resp., the Neumann-Laplacian on the domain defined with zero average) on the square ${\Omega}=(0,\pi)^2$. We can observe the expected stationarity property of the sequence of optimal domains $\omega^N$ from $N=4$ on (i.e., $16$ eigenmodes).

Note that, in the numerical simulations, we have taken $T=0.05$, that is, a small value. Indeed, in accordance with Remark \ref{rem_largetime}, if we take $T$ too large then the stationarity property is observed from $N=1$ on, and then the numerical simulations are not very meaningful.

\begin{figure}[H]
\begin{center}
\includegraphics[width=12cm]{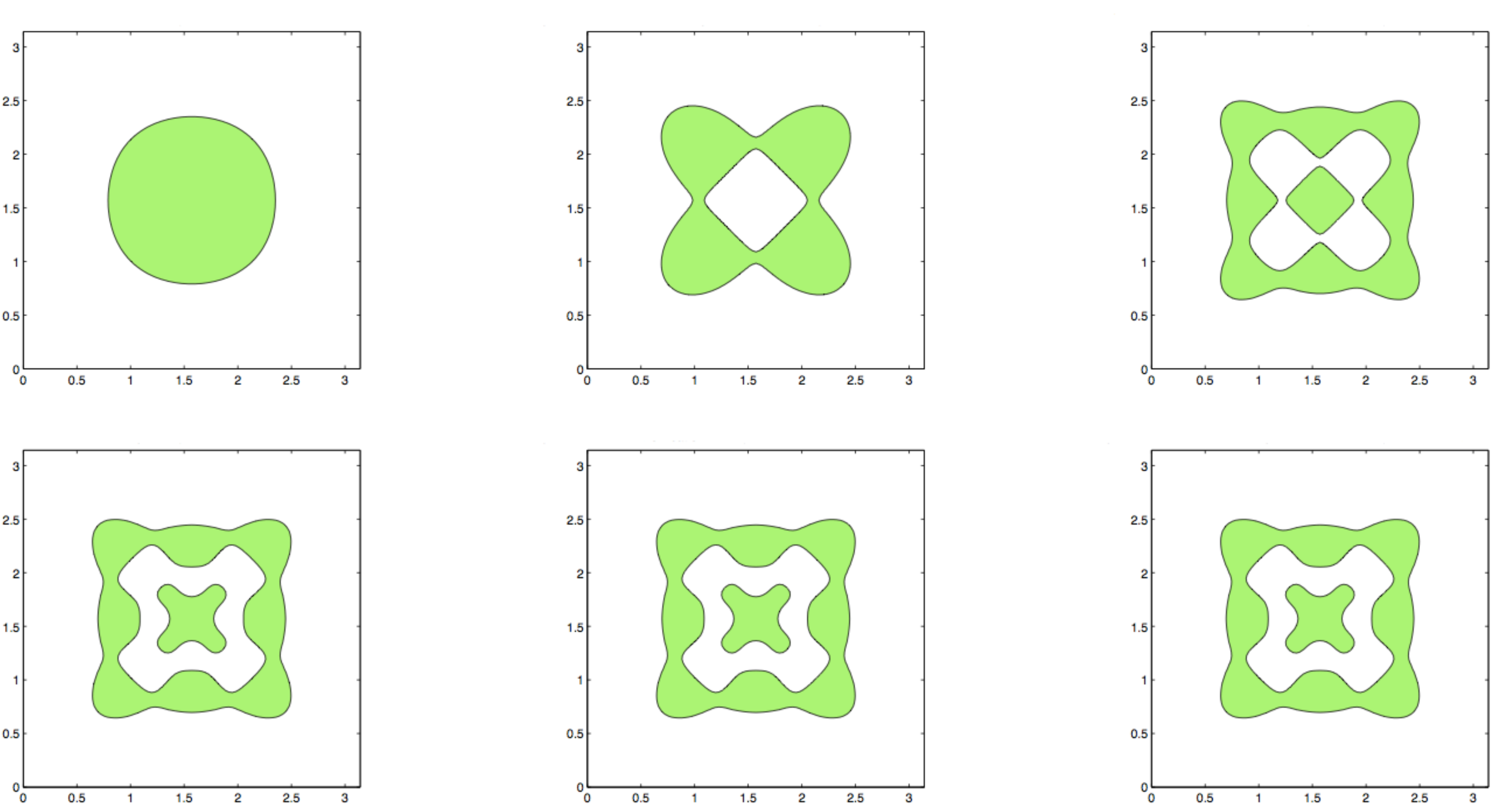}
\caption{On this figure, ${\Omega}=(0,\pi)^2$, $L=0.2$, $T=0.05$, and $A_{0}$ is the Dirichlet-Laplacian. Row 1, from left to right: optimal domain $\omega^N$ (in green)  for $N=1$, $2$, $3$. Row 2, from left to right: optimal domain $\omega^N$ (in green) for $N=4$, $5$, $6$.}\label{figpb2D}
\end{center}
\end{figure}

\begin{figure}[H]
\begin{center}
\includegraphics[width=12cm]{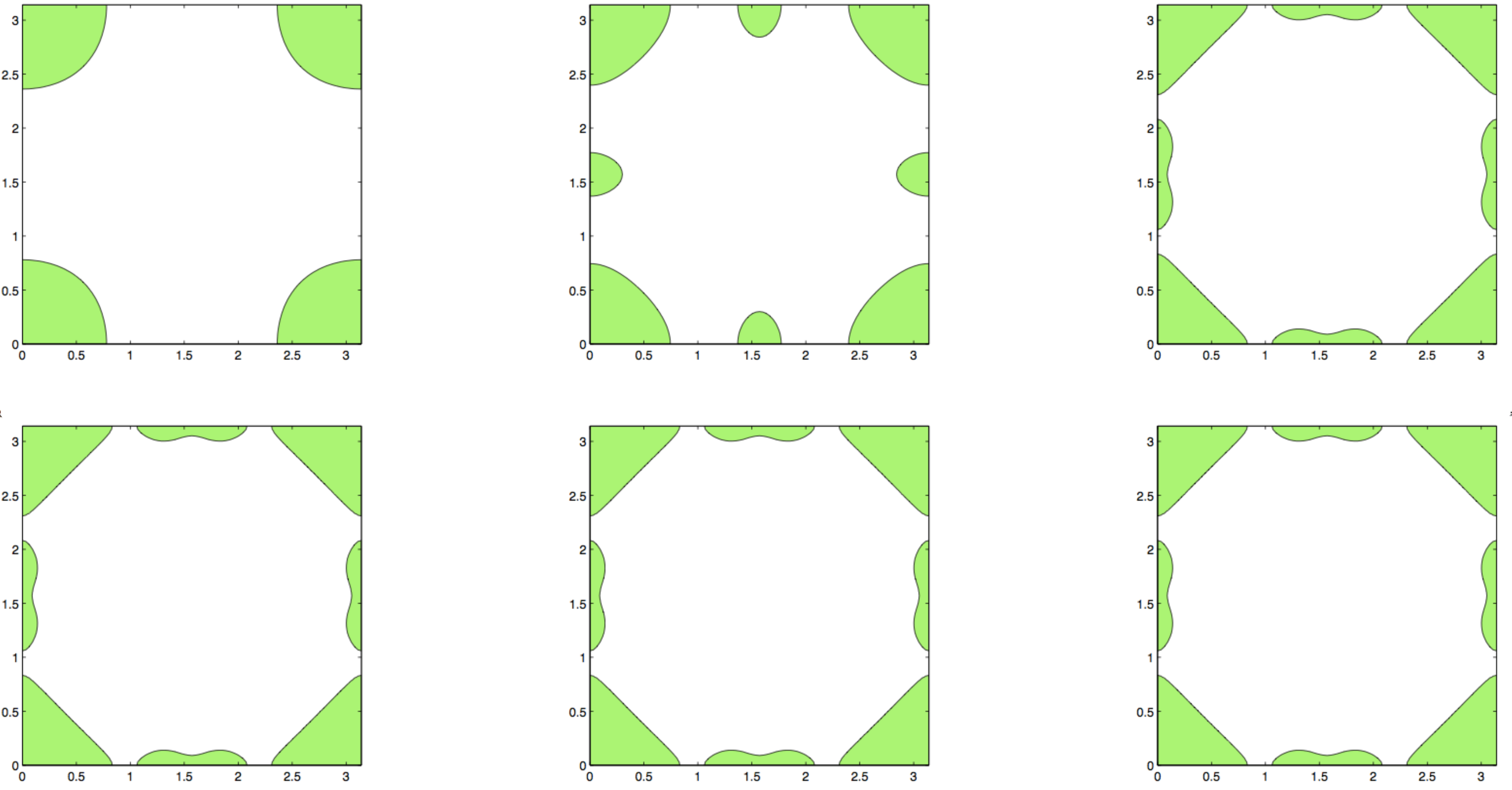}
\caption{On this figure, ${\Omega}=(0,\pi)^2$, $L=0.2$, $T=0.05$, and $A_{0}=-\triangle$ is the Neumann-Laplacian defined on the domain $D(A_0) = \{y \in H^2(\Omega,\C)\ \vert\ \int_\Omega y = 0\textrm{ and }\frac{\partial y}{\partial n}=0 \textrm{ on }\partial\Omega\}$. 
Row 1, from left to right: optimal domain $\omega^N$ (in green)  for $N=1$, $2$, $3$. Row 2, from left to right: optimal domain $\omega^N$ (in green) for $N=4$, $5$, $6$.}\label{figpb2N}
\end{center}
\end{figure}

On Figures \ref{figpb2Ddisk} and \ref{figpb2Ddiskfrac}, we compute the optimal domain $\omega^N$ for the operator $A_0=(-\triangle)^\alpha$, the fractional Dirichlet-Laplacian on the unit disk $\Omega=\{x\in\R^2\mid \Vert x\Vert < 1\}$, for $\alpha=1$ and $\alpha=0.15$. The numerical simulations illustrate the result stated in Theorem \ref{theoLBSC3}. Indeed, in the case $\alpha=1$, we can observe the expected stationarity property of the sequence of optimal domains $\omega^N$ from $N=3$ on (i.e., $9$ eigenmodes). In the case $\alpha=0.15$, the numerical simulations provide evidence of the accumulation of concentric rings at the boundary (as expected); they are done with values of $N$ between $1$ and $15$ (i.e., $225$ eigenmodes).

\begin{figure}[H]
\begin{center}
\includegraphics[width=12cm]{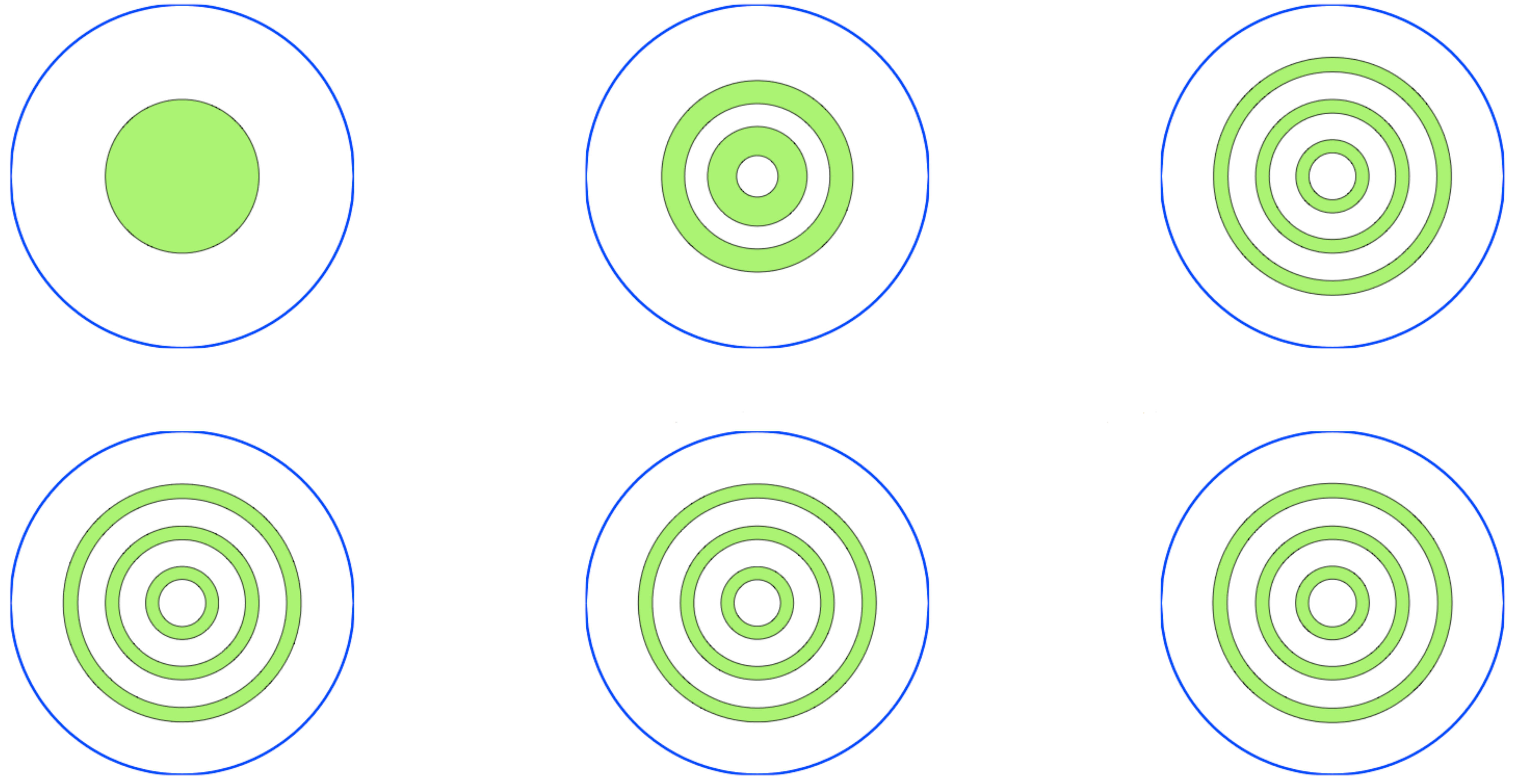}
\caption{On this figure, ${\Omega}=\{x\in\R^2\mid \Vert x\Vert < 1\}$, $L=0.2$, $T=0.05$, and $A_{0}$ is the Dirichlet-Laplacian. Row 1, from left to right: optimal domain $\omega^N$ (in green)  for $N=1$, $2$, $3$. Row 2, from left to right: optimal domain $\omega^N$ (in green) for $N=4$, $5$, $6$.}\label{figpb2Ddisk}
\end{center}
\end{figure}

\begin{figure}[H]
\begin{center}
\includegraphics[width=12cm]{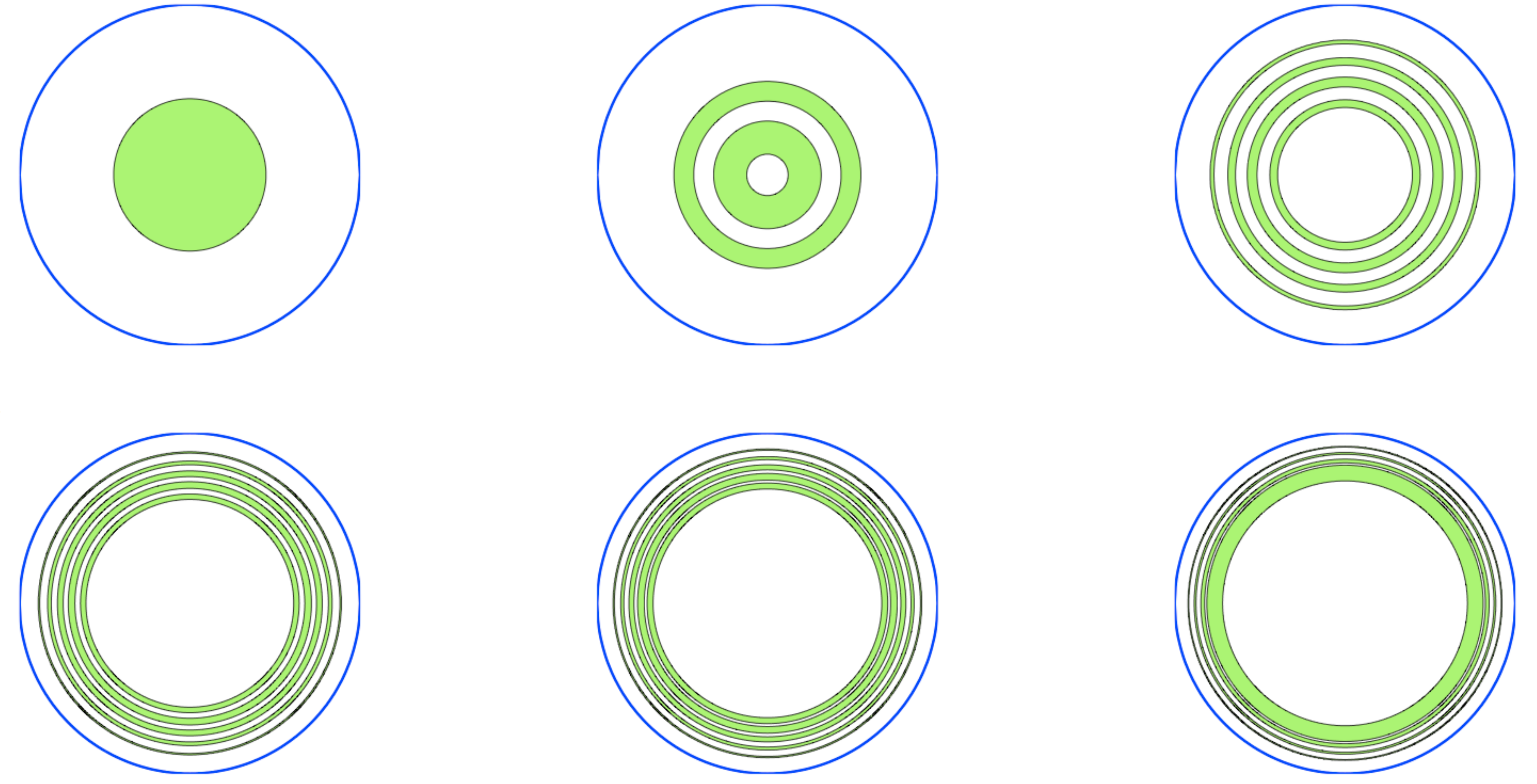}
\caption{On this figure, ${\Omega}=\{x\in\R^2\mid \Vert x\Vert < 1\}$, $L=0.2$, $T=0.05$, and $A_{0}=(-\triangle)^\alpha$ is the fractional Dirichlet-Laplacian with $\alpha=0.15$. Row 1, from left to right: optimal domain $\omega^N$ (in green)  for $N=1$, $2$, $5$. Row 2, from left to right: optimal domain $\omega^N$ (in green) for $N=10$, $12$, $15$.}\label{figpb2Ddiskfrac}
\end{center}
\end{figure}

These figures show what must be the optimal shape and placement of a thermometer in a square domain or in a disk (for the corresponding boundary conditions), when the observation is made over the horizon of time $[0,T]$.

\begin{remark}
If $\alpha=1$ then we are in the framework of Theorem \ref{mainTheo} and hence $N_0(T)=1$ if $T$ is large enough. With respect to what is drawn on Figure \ref{figpb2Ddisk}, this means that if $T$ is large enough then the optimal set is simply the central disk. The situation is however much more complicated if $\alpha<1/2$ (as on Figure \ref{figpb2Ddiskfrac}), since it is proved that a finite number of modes is never sufficient in order to recover the optimal set. In that case, for every value of $T$ the optimal set will always consist of an infinite number of concentric rings accumulating at the boundary, and it is an open and interesting question to investigate how the optimal set behaves when $T$ tends to $+\infty$.
\end{remark}


\subsection{Further comments from a semi-classical analysis viewpoint}\label{sec_LBSC}
The assumption $\Hdeux$ is of a spectral nature and can be seen from a semi-classical analysis viewpoint as follows.
The probability measure $\mu_{j}=\phi_j(x)^2 \, dx$ is interpreted (in quantum mechanics) as the probability of being in the state $\phi_j$ with an energy $\lambda_j$. Every closure point or weak limit for the vague topology of the sequence of probability measures $(\mu_j)_{j\in\N^*}$ is called a \textit{semi-classical measure} or a \textit{quantum limit} (the general definition is however in the phase space).
In this sense, the assumption $\Hdeux$ can be called a "lower-bound semi-classical assumption".

The question of determining the set of quantum limits is widely open in general. One is able to compute them only in very particular cases. In the standard round sphere (in any dimension) any geodesic invariant measure is a quantum limit (see \cite{JakobsonZelditch}), hence in particular the Dirac along any geodesic circle is a quantum limit.
This provides an account for possible strong concentrations of eigenfunctions. Similarly, in the disk with Dirichlet boundary conditions, the Dirac along the boundary is a quantum limit (accounting for the already mentioned whispering galleries phenomenon)
In contrast, in the flat torus (in any dimension) all quantum limits are absolutely continuous (see \cite{Jakobson}).

In some sense the assumption $\Hdeux$ stipulates that there is no very strong concentration phenomenon. 
To be more precise, we claim that:
\begin{quote}
\textit{The assumption $\Hdeux$ holds true if one is able to establish that the eigenfunctions $\phi_j$ are uniformly bounded in $L^\infty$ and that every semi-classical measure (weak limit of the probability measures $\mu_j$ for the vague topology) is absolutely continuous and the corresponding densities are positive over the whole domain $\Omega$.}
\end{quote}
This claim easily follows from the Portmanteau theorem (see also Remark \ref{rem21:00} further), because then, using the fact that $\gamma_j(T)$ is exponentially increasing, it follows that $\gamma_j(T)\int_{\Omega} a(x)\phi_j(x)^2\, dx\rightarrow +\infty$ for every $a\in\overline{\mathcal{U}}_L$.

Unless the case of flat tori mentioned above, we are not aware of existing results establishing exactly such a property, however results in this direction can be found in \cite{AnantharamanNonnenmacher,BurqZworski2}. Note that this property holds true for square domains (as explained previously). 

\medskip

In general, there are many possible quantum limits. The most natural one is the uniform measure, and it is indeed an important issue in quantum physics is to determine appropriate assumptions on $\Omega$ under which the probability measures $\mu_j$ tend to equidistribute as $j$ converges to $+\infty$. 
The famous Schnirelman theorem (see \cite{CdV2,gerard,HZ,Sh1,ZZ}) states that, if $\Omega$ is  ergodic with a piecewise smooth boundary, then\footnote{Note that the results established in these references are actually stronger and derive the QE property, not only "on the base" (that is, in the configuration space $\Omega$), but in the unit cotangent bundle $S^*{\Omega}$ of $\Omega$, in the framework of pseudo-differential operators. Here, we are concerned only with weak limits in $\Omega$, and following \cite{Zel3} we use the wording "on the base".}
 there exists a subsequence of $(\mu_j)_{j\in\N^*}$ of density one converging vaguely to the uniform measure $\frac{1}{\vert{\Omega}\vert}dx$ (\textit{Quantum Ergodicity on the base}).
Here, \textit{density one} means that there exists $\mathcal{I}\subset \N^*$ such that $\# \{j\in\mathcal{I}\ \vert\  j\leq N\} /N$ converges to $1$ as $N\to+\infty$, and the manifold is seen as a billiard where the geodesic flow moves at unit speed and bounces at the boundary according to the Geometric Optics laws. 

The Shnirelman theorem lets however open the possibility of having an exceptional sequence of measures $\mu_j$ converging vaguely, e.g., to an invariant measure carried by unstable closed geodesic orbits or on some invariant tori formed by such orbits. This kind of semi-classical measure is referred to as a \textit{scar} and accounts for an energy concentration phenomenon.

\medskip

Then, with respect to our discussion concerning the validity of the assumption $\Hdeux$, the worst possible case is when there exist a quantum limit which is completely concentrated, such as a scar.
In this sense, the assumption $\Hdeux$ is a "non-scarring" assumption.

\begin{remark}\label{rem21:00}
In the claim above (and in Theorem \ref{mainTheo}) we have assumed that the eigenfunctions are uniformly bounded in $L^\infty({\Omega})$. This strong assumption holds true in domains that are Cartesian products of one-dimensional domains, but for example if $\Omega$ is a ball then the eigenfunctions of the Dirichlet-Laplacian are not uniformly bounded.

It is interesting to understand why we add the strong assumption of $L^\infty$ uniform boundedness. It is needed in the application of the Portmanteau theorem, for the following reason.
In semi-classical analysis the vague topology for measures is usually employed.
Assuming that the quantum limits under consideration are absolutely continuous, the convergence in vague topology means that (up to subsequence)
$$
\lim_{j\rightarrow+\infty}\int_\omega \phi_j^2 \, dx = \int_\omega \phi^2\, dx\qquad
\forall\omega\ \textrm{measurable s.t.}\ \vert\partial\omega\vert=0,
$$
that is, the convergence holds on every Jordan measurable set.
In contrast, the convergence in $L^1$ weak topology means that
$$
\lim_{j\rightarrow+\infty}\int_\omega \phi_j^2 \, dx = \int_\omega \phi^2\, dx\qquad
\forall\omega\ \textrm{measurable},
$$
that is, the convergence does hold true as well for those measurable subsets whose boundary has a positive measure. Both convergence properties do coincide as soon as we add the $L^\infty$ boundedness assumption.
This explains why we added such a strong assumption. Indeed our aim is to be able to capture any possible measurable subset.
\end{remark}


\section{Proofs}\label{sec_proofs}
This section is devoted to prove Proposition \ref{truncTheo}, Theorems \ref{mainTheo}, \ref{propLBSC1}, \ref{propLBSC2} and \ref{theoLBSC3}, and finally (in this order), Theorem \ref{theo:stokes}.

\subsection{Proof of Proposition \ref{truncTheo}}\label{sec_proof_truncTheo}
For every $N\in\N^*$, the functional $J_N$ defined by \eqref{defJN} on $\mathcal{U}_L$ is extended to $\overline{\mathcal{U}}_L$ (see Remark \ref{rem:introrelax}) by setting
\begin{equation}\label{defJNa}
J_N(a) = \inf_{1\leq j\leq N} \gamma_j(T) \int_{\Omega} a (x)\vert\phi_j(x)\vert^2\, dx,
\end{equation}
for every $a\in \overline{\mathcal{U}}_L$. 
We consider the relaxed truncated problem
\begin{equation}\label{truncoptdesignpb}
\sup_{a\in \overline{\mathcal{U}}_L} J_N(a) .
\end{equation}
Using the same arguments as in the proof of Lemma \ref{propExistRelax}, it is clear that the problem \eqref{truncoptdesignpb} has at least one solution $a^N\in\overline{\mathcal{U}}_L$. Let us prove that $a^N$ is the characteristic function of a set $\omega^N$ such that $\chi_{\omega^N}\in\mathcal{U}_L$. Define the simplex set
$$
\mathcal{S}_N=\Big\{\alpha=(\alpha_j)_{1\leq j\leq N}\in \R_+^N\ \Big\vert\ \sum_{j=1}^N\alpha_j=1\Big\}.
$$
It follows from the Sion minimax theorem (see \cite{Sion}) that
\begin{eqnarray*}
\sup_{a\in \overline{\mathcal{U}}_L} \min_{1\leq j\leq N} \gamma_j(T) \int_{\Omega}a(x) \vert\phi_j(x)\vert^2\,dx & = & \max_{a\in \overline{\mathcal{U}}_L} \min_{\alpha\in\mathcal{S}_N}\int_{\Omega}a(x) \sum_{j=1}^N\alpha_j \gamma_j(T) \vert\phi_j(x)\vert^2\,dx\\
 & = &  \min_{\alpha\in\mathcal{S}_N}\max_{a\in \overline{\mathcal{U}}_L}\int_{\Omega}a(x) \sum_{j=1}^N\alpha_j \gamma_j(T) \vert\phi_j(x)\vert^2\,dx ,
\end{eqnarray*}
and that there exists $\alpha^N\in \mathcal{S}_N$ such that $(a^N,\alpha^N)$ is a saddle point of the functional
$$
(a,\alpha)\in \overline{\mathcal{U}}_L\times \mathcal{S}_N\longmapsto\sum_{j=1}^N\alpha_j  \gamma_j(T) \int_{\Omega}a(x)\vert\phi_j(x)\vert^2\,dx.
$$
Therefore, $a^N$ is solution of the optimal design problem
$$
\max_{a\in \overline{\mathcal{U}}_L}\int_{\Omega}a(x) \sum_{j=1}^N\alpha_j^N \gamma_j(T) \vert\phi_j(x)\vert^2\,dx.
$$
Set $\varphi_N(x)=\sum_{j=1}^N\alpha_j^N \gamma_j(T) \vert\phi_j(x)\vert^2$, for every $x\in\Omega$.
It follows from $\Hun$ that $\varphi_N$ is never constant on any subset of $\Omega$ of positive measure. 
Therefore, there exists $\lambda^N$ such that $a^N(x)=1$ whenever $\varphi_N(x)\geq\lambda_N$, and $a^N(x)=0$ otherwise. In other words, $a^N=\chi_{\omega^N}\in\mathcal{U}_L$, with $\omega^N=\{x\in\Omega\ \vert\ \varphi_N(x)>\lambda_N\}$. 

The uniqueness of $a_N$ follows from the fact that, as proved above, any optimal solution is a characteristic function. Indeed if there were two optimal sets, then any convex combination would also be an optimal solution because $J_N$ is concave. This raises a contradiction since any maximizer has to be a characteristic function.

Under the additional assumption $\Htrois$, the function $\varphi_N$ is analytic in $\Omega$ and therefore $\omega^N$ is an open semi-analytic set.

\subsection{Proof of Theorem \ref{mainTheo}}
According to Lemma \ref{propExistRelax}, the relaxed optimal design problem \eqref{defJa} has at least one solution $a^*\in\overline{\mathcal{U}}_L$.
The assumption $\Hdeux$ applied to $a^*$ implies that there exists $N_0\in\N^*$ such that
\begin{equation}\label{propN0}
\inf_{j>N_0}\gamma_j(T)\int_{\Omega} a^*(x) |\phi_j(x)|^2\, dx > \gamma_1(T) .
\end{equation}
Since there holds in particular $J_{N_0}(a^*)\leq\gamma_1(T)\int_{\Omega} a^*(x)\vert\phi_1(x)\vert^2\, dx\leq\gamma_1(T)$, we infer from \eqref{propN0} that
$$
J(a^*)  = \min \left(J_{N_0}(a^*),\inf_{j>N_0}\gamma_j(T)\int_{\Omega} a^*(x) |\phi_j(x)|^2\, dx\right) = J_{N_0}(a^*) .
$$
Using $\Hun$ and Proposition \ref{truncTheo}, let  $a^{N_0}\in\mathcal{U}_L$ be the maximizer of $J_{N_0}$.
Let us prove that  $J(a^*) = J_{N_0}(a^{N_0})$. Since $a^{N_0}$ maximizes $J_{N_0}$ over $\overline{\mathcal{U}}_L$, one has $J(a^*) =J_{N_0}(a^*) \leq J_{N_0}(a^{N_0})$.
Let us argue by contradiction and assume that $J_{N_0}(a^*) < J_{N_0}(a^{N_0})$.
For every $t\in[0,1]$, we set $ a_t = a^* + t ( a^{N_0} - a^*)$.
Since $J_{N_0}$ is concave (as an infimum of linear functionals), we get
$$
J_{N_0}(a_t) \geq (1-t)J_{N_0}(a^*) + t J_{N_0}(a^{N_0}) > J_{N_0}(a^*) = J(a^*),
$$
for every $t\in(0,1]$, which means that
\begin{equation}\label{train17h39}
\inf_{1\leq j\leq N_0} \gamma_j(T) \int_{\Omega} a_t(x) |\phi_j(x)|^2 \, dx > \inf_{1\leq j\leq N_0} \gamma_j(T) \int_{\Omega} a^*(x) |\phi_j(x)|^2 \, dx \geq J(a^*),
\end{equation}
for every $t\in(0,1]$. 
Besides, for every $\varepsilon>0$ there exists $t>0$ small enough such that
$$ \gamma_j(T) \int_{\Omega} a_t(x) |\phi_j(x)|^2 \, dx  \geq (1-t)\gamma_j(T) \int_{\Omega} a^*(x)|\phi_j(x)|^2 \, dx  \geq   \gamma_1(T) +\varepsilon,
$$
for every $j>N_0$. Therefore, 
\begin{equation}\label{train17h44}
\inf_{j> N_0}  \gamma_j(T) \int_{\Omega} a_t(x) |\phi_j(x)|^2 \, dx  > \gamma_1(T) .
\end{equation}
Since there holds in particular $J_{N_0}(a_t)\leq\gamma_1(T)$, we infer from \eqref{train17h39} and \eqref{train17h44} that $J(a_t) = J_{N_0}(a_t) > J(a^*)$, which contradicts the optimality of $a^*$.

Therefore $J_{N_0}(a^*) =J(a^*)= J_{N_0}(a^{N_0})$, whence the result.

\medskip

The function $T\mapsto N_{0}(T)$ is clearly nonincreasing since the function $T\mapsto \gamma_{j}(T)$ is increasing for every $j\in\N^*$.
The fact that $N_0(T)=1$ for $T$ large enough is an obvious consequence of the fact that $\gamma_2(T)/\gamma_1(T)\rightarrow+\infty$ as $T\rightarrow+\infty$ under the additional assumption that $\Real(\lambda_j)\rightarrow+\infty$ as $j\rightarrow+\infty$.

\medskip

It remains to prove that $C_T(\chi_{\omega^*})< C_{T,\textnormal{rand}}(\chi_{\omega^*})$ (assuming $\Htrois$).
In the conditions of Theorem \ref{mainTheo}, there exists $j_0\in\{1,\ldots,N_0\}$ such that
$$
C_{T,\textrm{rand}}(\chi_{\omega^*}) = \gamma_{j_0}(T) \int_\Omega \chi_{\omega^*}(x)\vert\phi_{j_0}(x)\vert^2\, dx = \min_{1\leq j\leq N} \gamma_j(T) \int_\Omega \chi_{\omega^*}(x)\vert\phi_j(x)\vert^2\, dx,
$$
for every $N\geq N_0$.

\begin{lemma}\label{lemm4}
There exists an integer $k_0\neq j_0$ such that $\int_\Omega \chi_{\omega^*}(x) \overline\phi_{j_0}(x) \phi_{k_0}(x)\, dx \neq 0$.
\end{lemma}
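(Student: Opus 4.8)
The plan is to argue by contradiction. Suppose that for every integer $k\neq j_0$ one has $\int_\Omega \chi_{\omega^*}\overline{\phi}_{j_0}\phi_k\, dx=0$. The idea is to read this whole family of identities as a single orthogonality statement in $L^2(\Omega,\C)$, and then to exploit completeness of the eigenbasis together with the analyticity assumption $\Htrois$ to force a contradiction.

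First I would set $f=\chi_{\omega^*}\overline{\phi}_{j_0}\in L^2(\Omega,\C)$. Since $(\phi_k)_{k\in\N^*}$ is a Hilbert basis of $L^2(\Omega,\C)$, so is the conjugated family $(\overline{\phi}_k)_{k\in\N^*}$, complex conjugation being an antiunitary map that preserves orthonormality and completeness. The coefficient of $f$ along $\overline{\phi}_k$ is $\langle f,\overline{\phi}_k\rangle=\int_\Omega f\,\phi_k\, dx=\int_\Omega \chi_{\omega^*}\overline{\phi}_{j_0}\phi_k\, dx$, which vanishes for every $k\neq j_0$ by the contradiction hypothesis. Hence the expansion of $f$ in this basis reduces to its single component along $\overline{\phi}_{j_0}$, that is, $\chi_{\omega^*}\overline{\phi}_{j_0}=c\,\overline{\phi}_{j_0}$ almost everywhere, where $c=\langle f,\overline{\phi}_{j_0}\rangle=\int_{\omega^*}\vert\phi_{j_0}\vert^2\, dx$.

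It then remains to reach a contradiction from $(\chi_{\omega^*}-c)\overline{\phi}_{j_0}=0$ almost everywhere. By $\Htrois$ the eigenfunction $\phi_{j_0}$ is analytic in $\Omega$ and not identically zero, so its zero set has zero Lebesgue measure; consequently $\chi_{\omega^*}=c$ almost everywhere on $\Omega$. But $\chi_{\omega^*}$ takes only the values $0$ and $1$, whereas $c$ satisfies $0<c<1$: indeed $c>0$ because $\vert\omega^*\vert=L\vert\Omega\vert>0$ and $\vert\phi_{j_0}\vert^2>0$ almost everywhere, while $c<1=\int_\Omega\vert\phi_{j_0}\vert^2\, dx$ because $\vert\Omega\setminus\omega^*\vert=(1-L)\vert\Omega\vert>0$ and again $\vert\phi_{j_0}\vert^2>0$ there. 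This is the desired contradiction, hence at least one off-diagonal term must be nonzero.

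The argument is essentially self-contained, and I do not expect a serious obstacle; the only delicate points are the complex-conjugation bookkeeping (checking that the vanishing integrals really are the coefficients of $f$ in a genuine orthonormal basis $(\overline{\phi}_k)$) and the appeal to analyticity to guarantee that $\phi_{j_0}$ vanishes only on a null set, so that $\chi_{\omega^*}$ is forced to be constant. The role of this lemma in the surrounding proof is transparent: the nonvanishing off-diagonal Gramian entry indexed by $(j_0,k_0)$ is exactly what will permit, through a suitable rank-one test perturbation in the quadratic form associated with $G_T(\chi_{\omega^*})$, to lower the smallest eigenvalue strictly below the minimal weighted diagonal entry $\gamma_{j_0}(T)\int_{\omega^*}\vert\phi_{j_0}\vert^2\, dx$, thereby establishing the strict inequality $C_T(\chi_{\omega^*})<C_{T,\textnormal{rand}}(\chi_{\omega^*})$.
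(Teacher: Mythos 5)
Your proof is correct and follows essentially the same route as the paper's: argue by contradiction, use completeness of the Hilbert basis to deduce $\chi_{\omega^*}\overline{\phi}_{j_0}=c\,\overline{\phi}_{j_0}$ almost everywhere, and then invoke analyticity of $\phi_{j_0}$ to reach a contradiction. The only (harmless) difference is the concluding step: the paper notes that $\phi_{j_0}$ would vanish on the nonempty open set $\Omega\setminus\bar\omega^*$ (implicitly using the already-established openness of $\omega^*$), whereas you use that the zero set of the analytic, nonzero $\phi_{j_0}$ is Lebesgue-null to force $\chi_{\omega^*}$ to equal a constant $c\in(0,1)$ almost everywhere, which needs no regularity of $\omega^*$ beyond $0<\vert\omega^*\vert<\vert\Omega\vert$.
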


\begin{proof}
We argue by contradiction. Assume that $\int_\Omega  \chi_{\omega^*}(x)\overline\phi_{j_0}(x) \phi_{k}(x)\, dx =0$, for every integer $k\neq j_0$. Since $(\phi_k)_{k\in\N^*}$ is a Hilbert basis of $L^2(\Omega)$, it follows that there exists a constant $c\in\C$ such that $\chi_{\omega^*}(x)\phi_{j_0}(x) = c\, \phi_{j_0}(x)$, for every $x\in\Omega$. In particular, this implies that $\phi_{j_0}$ must be equal to $0$ on the nonempty open set $\Omega\setminus\bar\omega^*$. Since $\phi_{j_0}$ is analytic, it must be identically zero on $\Omega$. This is a contradiction.
\end{proof}

From now on, let us fix an integer $N$ such that $N\geq N_0$ and $N\geq k_0$.

Let $(e_i)_{i\in\N^*}$ be the canonical basis of $\ell^2(\C)$. Using the notations of Remark \ref{rem9}, we have
$$ C_T(\chi_{\omega^*}) = \inf \{ \langle G_T(\chi_{\omega^*})X,X\rangle\ \vert\ X\in\ell^2(\C),\ \Vert X\Vert_{\ell^2}=1\},$$
and
$$ C_{T,N}(\chi_{\omega^*}) = \inf \{ \langle G_{T,N}(\chi_{\omega^*})X,X\rangle\ \vert\ X\in\C^N,\ \Vert X\Vert_2=1\}.$$
It follows that $C_T(\chi_{\omega^*})\leq C_{T,N}(\chi_{\omega^*})$.

Besides, taking $X=e_{j_0}$ yields that $C_{T,N}(\chi_{\omega^*}) \leq \gamma_{j_0}(T) \int_\Omega \chi_{\omega^*}(x)\vert\phi_{j_0}(x)\vert^2\, dx = C_{T,\textrm{rand}}(\chi_\omega^*) $.
Let us now show that the latter inequality is actually strict.
Consider the integer $k_0$ of Lemma \ref{lemm4}, and take
$X=\cos(\alpha)\, e_{j_0} + \mathrm{e}^{i\beta}\sin(\alpha)\, e_{k_0}$. 
Denoting by $g_{ij}$ the coefficients of the matrix $G_{T,N}(\chi_{\omega^*})$, we then have
$$
\langle G_{T,N}(\chi_{\omega^*})X,X\rangle = \cos^2(\alpha) \, g_{j_0j_0} + \sin^2(\alpha)\, g_{k_0k_0} + \Real(g_{j_0k_0}\mathrm{e}^{i\beta}\sin(2\alpha)),
$$
and hence, at the first order in $\alpha$ as $\alpha$ tends to $0$, we get $\langle G_{T,N}(\chi_{\omega^*})X,X\rangle = g_{j_0j_0} + 2\alpha \Real(g_{j_0k_0}\mathrm{e}^{i\beta})$.
Choosing $\alpha$ and $\beta$ such that $\alpha \Real(m_{k1}\mathrm{e}^{i\beta})<0$, it follows that 
$$C_T(\chi_{\omega^*})\leq C_{T,N}(\chi_{\omega^*})\leq \langle G_{T,N}(\chi_{\omega^*})X,X\rangle < g_{j_0j_0}= C_{T,\textrm{rand}}(\chi_{\omega^*}).$$

\subsection{Proof of Theorem \ref{propLBSC1}}


To avoid any confusion, we denote by $(\mu_j)_{j\in\N^*}$ the (positive) eigenvalues of the negative of the Dirichlet-Laplacian. With this notation, the eigenvalues of $A_0=(-\triangle)^\alpha$ are given by $\lambda_j=\mu_j^\alpha$, for every $j\in\N^*$. Let $(\phi_j)_{j\in\N^*}$ be an arbitrary Hilbert basis of eigenfunctions of $A_0$ (and of $\triangle$).

It is well known that the eigenfunctions (which are real-valued) are analytic, and hence $\Htrois$ is satisfied.

\medskip

Let us prove that the assumption $\Hun$ holds true.
Let $N\in\N^*$, $(\alpha_{j})_{1\leq j\leq N}\in (\R_{+})^N$ and $C\geq 0$ be such that $\sum_{j=1}^N \alpha_j \phi_j(x)^2 = C$ almost everywhere on some subset $E$ of positive measure. By analyticity and by continuity, the function $x\mapsto\sum_{j=1}^N \alpha_j \phi_j(x)^2$ must be constant on $\bar\Omega$ on its whole, and $\Hun$ follows since the functions $\phi_j$ vanish on $\partial\Omega$. 

\medskip

Let us prove that
\begin{equation*}
\liminf_{j\rightarrow+\infty} \ \frac{e^{2\lambda_j T}-1}{2\lambda_j} \int_\Omega a(x)\phi_j(x)^2\, dx = +\infty ,
\end{equation*}
for every $a\in\overline{\mathcal{U}}_L$, which will imply $\Hdeux$.

For every $a\in\overline{\mathcal{U}}_L$, there exist $\varepsilon>0$ and a measurable subset $E$ of $\Omega$ with positive measure such that $a\geq \varepsilon\chi_E$. Therefore,
$$
\int_\Omega a(x)\phi_j(x)^2\, dx \geq \varepsilon\int_E\phi_j(x)^2\, dx,
$$
for every $j\in\N^*$.
Moreover, it can be assumed that there exist $x_0\in\Omega$ and $R>0$ such that $E\subset  B(x_0,R)\subset \Omega$.
This last technical assumption is required to apply results of \cite{AEWZ}.
Now, it follows from \cite{AEWZ} that, under the regularity assumptions on $\Omega$, there exists a positive constant $C$ (depending on $\Omega$, $R$, $\vert E\vert / \vert B(x_0,R)\vert$) such that
$$
\int_E \vert\phi_j(x)\vert\, dx \geq \frac{e^{-C\sqrt{\mu_j}}}{C} ,
$$
for every $j\in\N^*$, and thus, from Cauchy-Schwarz inequality,
$$
\int_E \phi_j(x)^2 \, dx \geq \frac{e^{-2C\sqrt{\mu_j}}}{C^2\vert E\vert} ,
$$
for every $j\in\N^*$.
Therefore,
$$
\liminf_{j\rightarrow+\infty} \ \gamma_j(T) \int_\Omega a(x)\phi_j(x)^2\, dx \geq \frac{\varepsilon}{2C^2\vert E\vert} \liminf_{j\rightarrow+\infty}  \frac{e^{2\mu_j^\alpha T-2C\sqrt{\mu_j}}}{\mu_j^\alpha} = +\infty
$$
since $\alpha> 1/2$.


\subsection{Proof of Theorem \ref{propLBSC2}: the $n$-dimensional orthotope}\label{sec_proofpropLBSC}
We proceed in two steps, studying first the case $n=1$, and then the case $n\geq 2$.

\paragraph{Case $n=1$.}
The eigenelements of $A_0$ are given by $\lambda_j=j^{2\alpha}$ and $\phi_j(x)=\sqrt{\frac{2}{\pi}}\sin jx$, for every $j\in\N^*$, and every $x\in[0,\pi]$. The assumption $\Hdeux$ is then satisfied, as a direct consequence of the following lemma whose proof can be found in \cite{periago,PTZ_HUM1D}.

\begin{lemma}\label{lemmeP}
Let $\rho\in L^\infty(0,\pi)$ be a nonnegative fonction. There holds
$$
\int_0^\pi \rho(x)\sin^2(jx)\, dx\geq \frac{1}{2}\left(\int_0^\pi \rho(x)\, dx-\sin \left(\int_0^\pi \rho(x)\, dx\right)\right),
$$
for every $j\in\N^*$.
\end{lemma}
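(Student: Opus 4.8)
The plan is to recast the inequality as a one-sided bound on a cosine integral and then to locate the extremal density by a rearrangement (bathtub) argument. Writing $I=\int_0^\pi\rho(x)\,dx$ and using $\sin^2(jx)=\tfrac12\bigl(1-\cos(2jx)\bigr)$, one has
\[
\int_0^\pi\rho(x)\sin^2(jx)\,dx=\frac{I}{2}-\frac12\int_0^\pi\rho(x)\cos(2jx)\,dx ,
\]
so the claimed estimate is equivalent to $\int_0^\pi\rho(x)\cos(2jx)\,dx\leq\sin I$. Thus it suffices to maximize the left-hand side over all admissible densities of fixed mass $\int_0^\pi\rho=I$ and to show that the maximum is exactly $\sin I$. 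At this point I would use the normalization $0\leq\rho\leq1$ (the relevant case for the application, where $\rho=a\in\overline{\mathcal{U}}_L$): this bound is precisely what keeps the supremum finite and equal to the sharp constant, and it must be used honestly.

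Next I would invoke the bathtub principle: among measurable functions with $0\leq\rho\leq1$ and prescribed mass $I$, the integral $\int_0^\pi\rho(x)\cos(2jx)\,dx$ is maximized by the indicator $\rho=\chi_{\{\cos(2jx)>s\}}$ of a superlevel set of the weight, the threshold $s$ being fixed by $\bigl|\{\cos(2jx)>s\}\bigr|=I$. Existence and uniqueness of such an $s$ follow from the strict monotonicity and continuity in $s$ of the measure of the superlevel set, which hold because $\cos(2jx)$ is real-analytic and non-constant, hence its distribution function has no atoms. The maximal value is then $M_j(I)=\int_{\{\cos(2jx)>s\}}\cos(2jx)\,dx$.

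The key observation is that $M_j(I)$ does not depend on $j$. Indeed, the substitution $u=jx$ together with the $\pi$-periodicity of $\cos(2\,\cdot\,)$ turns both the mass of a superlevel set and the integral of $\cos$ over it into $\tfrac1j$ times the analogous quantity over $[0,j\pi]$, while periodicity contributes exactly a factor $j$; the two factors cancel and the same threshold $s$ serves the reduced problem, so $M_j(I)=M_1(I)$. It then remains to compute $M_1(I)$ directly: for $j=1$ the set $\{\cos(2x)>s\}\cap[0,\pi]$ is the symmetric pair of intervals $[0,I/2]\cup[\pi-I/2,\pi]$ (using $I\leq\pi$, guaranteed by $\rho\leq1$), and a routine integration gives
\[
M_1(I)=\int_{[0,I/2]\cup[\pi-I/2,\pi]}\cos(2x)\,dx=\sin I .
\]
Combining the three displays yields $\int_0^\pi\rho(x)\sin^2(jx)\,dx\geq\tfrac12(I-\sin I)$, with equality for the extremal configuration above, which proves the lemma.

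The main obstacle is the careful bookkeeping in the $j$-independence step: one must check that the change of variables and the period-counting produce matching powers of $j$ for both the constraint and the objective, so that they cancel and the threshold is preserved. A secondary but essential point is the explicit role of the bound $\rho\leq1$: without it the supremum of $\int_0^\pi\rho\cos(2jx)\,dx$ would be $I>\sin I$ (attained by mass concentrating where $\cos(2jx)=1$), and the stated constant would fail, so this normalization cannot be dropped. Everything else reduces to elementary trigonometric integration.
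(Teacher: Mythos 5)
Your argument is correct, but note that there is no in-paper proof to compare it against: the paper disposes of this lemma with a citation to \cite{periago,PTZ_HUM1D}. Your route --- the halving identity $\sin^2(jx)=\tfrac12\bigl(1-\cos(2jx)\bigr)$, the equivalent one-sided bound $\int_0^\pi\rho(x)\cos(2jx)\,dx\le\sin I$, the bathtub principle reducing the extremal density to the indicator of a superlevel set of $\cos(2jx)$, the periodicity bookkeeping showing the extremal value is independent of $j$, and the explicit computation giving $\sin I$ for $j=1$ --- is sound at every step, and it is essentially the standard rearrangement argument used in the cited references, so there is no genuine methodological divergence to report.

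The substantive point is the hypothesis $0\le\rho\le1$, which you correctly identify as indispensable rather than cosmetic. As printed (nonnegative $\rho\in L^\infty(0,\pi)$, no upper bound), the lemma is false: take $j=1$ and $\rho=M\chi_{[0,\varepsilon]}$ with $M\varepsilon=10$; then the left-hand side is at most $M\varepsilon^3/3=10\varepsilon^2/3\to 0$ as $\varepsilon\to0$, while the right-hand side is at least $\tfrac12(10-1)$, since the right-hand side grows linearly in the mass $I=\int_0^\pi\rho$ whereas the left-hand side can be crushed by concentrating the mass near a zero of $\sin(jx)$. The constraint $\rho\le1$ (which in particular forces $I\le\pi$, needed for your identification of the superlevel set as $[0,I/2]\cup[\pi-I/2,\pi]$) is exactly the setting of the cited references, and it is the only case the paper ever uses: the lemma is applied to $a\in\overline{\mathcal{U}}_L\subset L^\infty(\Omega;[0,1])$, and the inner densities produced by the induction in the proof of Lemma \ref{lemmasquare} remain bounded by $1$ as well. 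So your proof establishes the corrected statement that the paper actually needs, and the normalization you flag should be read as an implicit hypothesis (in effect, a small erratum) in the paper's formulation rather than as a gap in your argument.
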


Indeed, it follows from this lemma that $\int_0^\pi a(x)\phi_j(x)^2\, dx \geq \frac{L\pi-\sin (L \pi)}{\pi}$, for every $j\in\N^*$, which clearly implies that $\Hdeux$ holds true, since $\gamma_j(T)\rightarrow +\infty$ as $j\rightarrow+\infty$ (independently on the value of $\alpha>0$). Moreover, the conclusion of Theorem \ref{mainTheo} holds true with $N_0$ defined as the lowest integer $j$ such that
$$
\frac{e^{2j^{2\alpha}T}-1}{2j^{2\alpha}} \geq \frac{\pi}{2}\frac{e^{2T}-1}{L\pi-\sin (L\pi)}.
$$


\paragraph{Case $n\geq 2$.}
We consider the Hilbert basis of eigenfunctions given by \eqref{basis_orthotope}.

\begin{lemma}\label{lemmasquare}
Let $\rho\in L^\infty((0,\pi)^n)$ be a nonnegative function such that $\int_{(0,\pi)^n} \rho(x)\, dx>0$. We have
$$
\inf_{(j_1,\dots,j_n)\in{\N^*}^n}\int_{(0,\pi)^n} \rho(x)\phi_{j_1,\dots,j_n}(x)^2\, dx\geq F^{[n]} \left(\int_{(0,\pi)^n} \rho(x)\, dx\right)>0,
$$
where $F$ is the function defined on $[0,\pi]$ by $F(s)= \frac{1}{\pi}(s-\sin s)$, and $F^{[n]}$ is the composition of $F$ with itself, $n$ times.
\end{lemma}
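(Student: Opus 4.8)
The plan is to argue by induction on the dimension $n$, peeling off one variable at a time with Fubini and invoking the one-dimensional estimate of Lemma \ref{lemmeP} at each stage. The base case $n=1$ is exactly Lemma \ref{lemmeP}: after the normalization $\phi_j^2 = \frac{2}{\pi}\sin^2(jx)$, it reads $\int_0^\pi \rho\,\phi_j^2\,dx \geq F\left(\int_0^\pi \rho\,dx\right)$ for every $j$, hence $\inf_j \int_0^\pi \rho\,\phi_j^2\,dx \geq F\left(\int_0^\pi\rho\,dx\right)$, which is the claim for $n=1$. For the inductive step I would assume the statement in dimension $n-1$ and use the product structure $\phi_{j_1,\dots,j_n}(x)^2 = \phi_{j_1}(x_1)^2\,\prod_{k=2}^n \phi_{j_k}(x_k)^2$ coming from \eqref{basis_orthotope}.

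First I would fix $x_1$ and apply the $(n-1)$-dimensional induction hypothesis to the slice $\rho(x_1,\cdot)$: setting $H(x_1)=\int_{(0,\pi)^{n-1}}\rho(x_1,x')\,dx'$, one gets, for \emph{every} choice of $(j_2,\dots,j_n)$, that $\int_{(0,\pi)^{n-1}}\rho(x_1,x')\prod_{k=2}^n\phi_{j_k}(x_k)^2\,dx' \geq F^{[n-1]}(H(x_1))$, since the left-hand side dominates its own infimum over $(j_2,\dots,j_n)$, which is bounded below by $F^{[n-1]}(H(x_1))$. Multiplying this pointwise inequality by $\phi_{j_1}(x_1)^2\geq 0$ and integrating in $x_1$ yields $\int_{(0,\pi)^n}\rho\,\phi_{j_1,\dots,j_n}^2\,dx \geq \int_0^\pi \phi_{j_1}(x_1)^2\,F^{[n-1]}(H(x_1))\,dx_1$. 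Applying Lemma \ref{lemmeP} once more, now in the variable $x_1$ to the nonnegative function $g=F^{[n-1]}(H(\cdot))$, gives a lower bound $F\left(\int_0^\pi F^{[n-1]}(H(x_1))\,dx_1\right)$, uniform in all the indices $(j_1,\dots,j_n)$.

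It then remains to collapse the nested expression into the iterated composition, i.e.\ to pass from $F\left(\int_0^\pi F^{[n-1]}(H(x_1))\,dx_1\right)$ to $F\left(F^{[n-1]}\left(\int_0^\pi H(x_1)\,dx_1\right)\right)=F^{[n]}\left(\int_{(0,\pi)^n}\rho\,dx\right)$, using that $F$ is nondecreasing ($F'(s)=\frac{1}{\pi}(1-\cos s)\geq 0$) together with the superadditivity-type estimate $\int_0^\pi F^{[n-1]}(H(x_1))\,dx_1 \geq F^{[n-1]}\left(\int_0^\pi H(x_1)\,dx_1\right)$. This last estimate is the crux of the argument and is where I expect the main difficulty to lie: it must be obtained from the convexity of $F$ on $[0,\pi]$ (note $F''(s)=\frac{1}{\pi}\sin s\geq 0$ there) via Jensen's inequality, being careful both about the direction in which convexity is used and about keeping all arguments inside the domain $[0,\pi]$ where $F$ enjoys these monotonicity and convexity properties (the normalization of the total mass is delicate precisely here). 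Finally, strict positivity is immediate: since $s>\sin s$ for $s>0$ one has $F(s)>0$ on $(0,\pi]$, so $F$ maps $(0,\pi]$ into $(0,1]\subset(0,\pi]$, and therefore $F^{[n]}\left(\int_{(0,\pi)^n}\rho\,dx\right)>0$ whenever $\int_{(0,\pi)^n}\rho\,dx>0$, which closes the induction and yields the lemma.
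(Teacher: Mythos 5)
Your overall plan (induction on $n$, Fubini, and Lemma \ref{lemmeP} at each stage) is the same as the paper's, but the \emph{order} in which you peel off the variables creates a genuine gap, located exactly at the step you yourself flag as the crux. After applying the $(n-1)$-dimensional hypothesis to the slices $\rho(x_1,\cdot)$ and then Lemma \ref{lemmeP} in the variable $x_1$, you need
\[
\int_0^\pi F^{[n-1]}(H(x_1))\,dx_1 \;\geq\; F^{[n-1]}\left(\int_0^\pi H(x_1)\,dx_1\right),
\]
and this inequality is false; convexity gives exactly the opposite. Since $F^{[n-1]}$ is convex, nondecreasing and vanishes at $0$, Jensen's inequality with the normalized measure $dx_1/\pi$ only yields $\int_0^\pi F^{[n-1]}(H)\,dx_1 \geq \pi F^{[n-1]}\left(\frac{1}{\pi}\int_0^\pi H\,dx_1\right)$, while for any convex $\phi$ with $\phi(0)=0$ one has $\pi\,\phi(M/\pi)\leq \phi(M)$, so the chain stops strictly short of the target. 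Concretely, take $n-1=1$ and $H\equiv c$ with $c>0$ small: the left-hand side is $\pi F(c)=c-\sin c\approx c^3/6$, whereas the right-hand side is $F(\pi c)=c-\frac{1}{\pi}\sin(\pi c)\approx \pi^2 c^3/6$, so the required inequality fails by a factor of about $\pi^2$. Your argument therefore does produce \emph{a} positive uniform lower bound, but a strictly weaker one than the constant $F^{[n]}\left(\int_{(0,\pi)^n}\rho\,dx\right)$ asserted in the lemma, and that explicit constant is used quantitatively in the paper (e.g.\ in the characterization of $N_0$ following Theorem \ref{propLBSC2}).

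The repair is to reverse the order of the two ingredients, which is what the paper does. Keep the integration in $x_n$ outermost and apply Lemma \ref{lemmeP} \emph{first}, in the variable $x_n$, taking as density the function $x_n\mapsto\left(\frac{2}{\pi}\right)^{n-1}\int_{[0,\pi]^{n-1}}\rho(x)\prod_{k=1}^{n-1}\sin^2(j_kx_k)\,dx_1\cdots dx_{n-1}$. This bounds the full integral below by $F$ evaluated at the $(n-1)$-dimensional quantity associated with the marginal $\tilde\rho(x_1,\dots,x_{n-1})=\int_0^\pi \rho(x)\,dx_n$; the induction hypothesis is then applied to $\tilde\rho$ \emph{inside} the argument of $F$, and the monotonicity of $F$ concludes. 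In that order the composition $F^{[n]}$ appears directly and no interchange of $F^{[n-1]}$ with an integral is ever needed.
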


\begin{proof}[Proof of Lemma \ref{lemmasquare}]
Using the Fubini theorem and Lemma \ref{lemmeP}, we infer that
\begin{eqnarray*}
\int_{\Omega} \rho(x)\phi_{j_1,\dots,j_n}(x)^2\, dx & = &\left(\frac{2}{\pi}\right)^{n}\int_0^\pi \sin^2(j_nx_n)\int_{[0,\pi]^{n-1}}\rho(x)\prod_{k=1}^{n-1}\sin ^2(j_kx_k)\, dx_1\dots\, dx_{n-1} \, dx_n\\
 & \geq & F\left(\left(\frac{2}{\pi}\right)^{n-1}\int_{[0,\pi]^{n-1}}\rho(x)\prod_{k=1}^{n-1}\sin ^2(j_kx_k)\, dx_1\dots\, dx_{n-1} \right) ,
\end{eqnarray*}
and the conclusion follows from a simple induction argument.
\end{proof}

It follows from this lemma that $\int_{(0,\pi)^n}a(x)\phi_{j_1,\dots,j_n}(x)^2\, dx \geq F^{[n]}(L\pi^n)$, for all $(j_1,\dots,j_n)\in{\N^*}^n$. Therefore the assumption $\Hdeux$ holds true since $\gamma_{(j_1,\dots,j_n)}(T)\rightarrow +\infty$ as $\vert(j_1,\dots,j_n)\vert\rightarrow+\infty$. Moreover, the conclusion of Theorem \ref{mainTheo} holds with $N_0$ defined as the lowest multi-index $(j_1,\ldots,j_n)$ (in lexicographical order) such that
$$
\frac{e^{2\lambda_{(j_1,\ldots,j_n)}T}-1}{2\lambda_{(j_1,\dots,j_n)}} \geq \frac{e^{2\lambda_{(1,\ldots,1)}T}-1}{2\lambda_{(1,\ldots,1)} F^{[n]}(L\pi^n)}.
$$



\subsection{Proof of Theorem \ref{theoLBSC3}: the unit disk of the Euclidean plane}\label{ex4}
According to Lemma \ref{propExistRelax}, let $a^*$ be a maximizer of $J$ over $\overline{\mathcal{U}}_L$. Our objective is to prove that $a^*$ is unique and is the characteristic function of a subset $\omega^*$ sharing the properties announced in the statement of Theorem \ref{theoLBSC3}.

In order to underline the dependence on $\alpha$, throughout the proof we use the notation
\begin{equation}\label{gammajkdisk}
\gamma_{j,k}(T,\alpha)=
\left\{ \begin{array}{lll}
\displaystyle\frac{e^{2\lambda_{j,k}T}-1}{2\lambda_{j,k}} & \textrm{if} & \lambda_{j,k}\neq 0, \\
T & \textrm{if} & \lambda_{j,k}=0 .
\end{array}\right.
\end{equation}
Setting $\mathcal{I}=\N\times \N^*\times \{1,2\}$, using the expression \eqref{basis_disk} of the eigenfunctions, we have
\begin{equation}\label{Jdisk}
J(a) = \inf_{(j,k,m)\in\mathcal{I}}  \gamma_{j,k}(T,\alpha)\int_0^{2\pi}\int_0^1 a(r,\theta)R_{j,k}(r)^2 Y_{j,m}(\theta)^2\, r\, dr\, d\theta ,
\end{equation}
for every $a\in\overline{\mathcal{U}}_L$, with $Y_{j,1}(\theta)^2=\frac{\cos^2(j\theta)}{\pi}$ and $Y_{j,2}(\theta)^2=\frac{\sin^2(j\theta)}{\pi}$.

To facilitate the reading of the proof, we split it into several steps.
We first introduce an associated radial problem, with a functional $J_r$ corresponding to the functional above restricted to radial functions. We prove that $J$ and $J_r$ have the same maxima (not necessarily the same maximizers). Then we distinguish between two cases: 1) $\alpha>1/2$, 2) $0<\alpha<1/2$ or $\alpha=1/2$ and $T$ small enough. In contrast to the first case, which can be tackled directly using Theorem \ref{propLBSC1}, the second case is much more difficult to treat. We apply a refined version of the minimax theorem in order to prove that the optimal domain exists and is unique. This requires to prove that a certain (switching) function is analytic, which necessitates a very careful and technical analysis using in an instrumental way the knowledge of some quantum limits (semi-classical measures) of the eigenfunctions and of some fine asymptotic properties of Bessel functions. Actually, the proof of the analyticity, which is very lenghty, takes the major part of the section.

\subsubsection{Associated radial problem}
For every $ b\in L^\infty(0,1)$, we set
$$
J_r(b) = \inf_{\substack{j\in\N\\ \ k\in\N^*}}\gamma_{j,k}(T,\alpha)\int_{0}^1b(r ) R_{j,k}(r )^2 r\, dr .
$$
We define the set
$$
{\mathcal{U}}_L^r = \left\{ \chi_{\omega_r}\in L^{\infty}(0,1;\{0,1\}) \mid \omega_r\ \textrm{is a measurable subset of $(0,1)$ of Lebesgue measure}\ \vert\omega_r\vert = \frac{L}{2}  \right\}.
$$
Its weak star convex closure is
$
\overline{\mathcal{U}}_L^r = 
\{ b\in L^\infty(0,1;[0,1])\ \vert\ \int_{0}^1b(r) r\, dr =\frac{L}{2}  
\}.
$
We consider the problem 
\begin{equation}\label{radialpb}
\sup_{b\in \overline{\mathcal{U}}_L^r} J_r(b)
\end{equation}
of maximizing $J_r$ over the set ${\mathcal{U}}_L^r$.

%

\begin{lemma}\label{lem6}
The problem \eqref{radialpb} has at least one solution $b^*$, and
\begin{equation}\label{21:06}
J(a^*) = 
\max_{a\in \overline{\mathcal{U}}_L} J(a) =\max_{b\in \overline{\mathcal{U}}_L^r} J_r(b) = J_r(b^*) .
\end{equation}
%
Besides, if $a^*\in \overline{\mathcal{U}}_L$ is a maximizer of $J$, then the (radial) function $\bar a^*\in \overline{\mathcal{U}}_L$ defined by $\bar a^*(r,\theta) = \frac{1}{2\pi}\int_0^{2\pi} a^*(r,\Theta)\, d\Theta$ (which does not depend on $\theta$) is as well a maximizer of $J$, and the function $b^*\in \overline{\mathcal{U}}_L^r$ defined by $b^*(r) = \bar a^*(r,0)$ is a maximizer of $J_r$.
\end{lemma}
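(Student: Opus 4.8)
The plan is to prove both assertions of the lemma together by an angular--averaging (symmetrization) argument that exploits the rotational invariance of the disk. Throughout I would abbreviate, for $(j,k,m)\in\mathcal{I}$,
$$
I_{j,k,m}(a)=\gamma_{j,k}(T,\alpha)\int_0^{2\pi}\int_0^1 a(r,\theta)R_{j,k}(r)^2 Y_{j,m}(\theta)^2\, r\, dr\, d\theta ,
$$
so that, by \eqref{Jdisk}, $J(a)=\inf_{(j,k,m)\in\mathcal{I}} I_{j,k,m}(a)$. Existence of a maximizer $b^*$ of $J_r$ over $\overline{\mathcal{U}}_L^r$ then follows verbatim from the proof of Lemma \ref{propExistRelax}: each map $b\mapsto \gamma_{j,k}(T,\alpha)\int_0^1 b(r)R_{j,k}(r)^2\, r\, dr$ is linear and continuous for the weak star topology, hence $J_r$ is upper semicontinuous as an infimum of such functionals, and $\overline{\mathcal{U}}_L^r$ is weak star compact.

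Next I would record the easy inequality $\max_{\overline{\mathcal{U}}_L}J\geq \max_{\overline{\mathcal{U}}_L^r}J_r$ via a radial embedding. Given $b\in\overline{\mathcal{U}}_L^r$, the function $a_b(r,\theta)=b(r)$ lies in $\overline{\mathcal{U}}_L$ (integrating in $\theta$ turns the constraint $\int_0^1 b(r)r\,dr=\tfrac L2$ into $\int_\Omega a_b=L|\Omega|$), and since $\int_0^{2\pi}Y_{j,m}(\theta)^2\, d\theta=1$ one gets $I_{j,k,m}(a_b)=\gamma_{j,k}(T,\alpha)\int_0^1 b(r)R_{j,k}(r)^2\, r\, dr$ independently of $m$, so that $J(a_b)=J_r(b)$.

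The heart of the matter is the reverse inequality together with the averaging claim. Let $a^*$ be a maximizer of $J$ (it exists by Lemma \ref{propExistRelax}) and form $\bar a^*$. By Fubini one checks $\bar a^*\in\overline{\mathcal{U}}_L$, and for every $(j,k,m)$ a direct computation gives $I_{j,k,m}(\bar a^*)=\gamma_{j,k}(T,\alpha)\int_0^1 \bar a^*(r)R_{j,k}(r)^2\, r\, dr$, again independent of $m$; call this common value $\bar I_{j,k}$. The key observation is the two-mode estimate: for $j\geq 1$ the angular weights satisfy the constant identity $Y_{j,1}(\theta)^2+Y_{j,2}(\theta)^2=\tfrac1\pi$, so that $I_{j,k,1}(a^*)+I_{j,k,2}(a^*)=2\bar I_{j,k}$, whence
$$
\min_{m}I_{j,k,m}(a^*)\ \leq\ \tfrac12\big(I_{j,k,1}(a^*)+I_{j,k,2}(a^*)\big)=\bar I_{j,k} ,
$$
while for $j=0$ there is a single mode and $I_{0,k}(a^*)=\bar I_{0,k}$ trivially. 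Taking the infimum over $(j,k)$ and using monotonicity of the infimum yields $J(a^*)\leq J(\bar a^*)$, so $\bar a^*$ is itself a maximizer of $J$.

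Finally, setting $b^*(r)=\bar a^*(r,0)$ gives $b^*\in\overline{\mathcal{U}}_L^r$ with $J_r(b^*)=\inf_{j,k}\bar I_{j,k}=J(\bar a^*)=J(a^*)=\max_{\overline{\mathcal{U}}_L}J$; combined with the embedding inequality this forces $\max_{\overline{\mathcal{U}}_L^r}J_r=\max_{\overline{\mathcal{U}}_L}J$, proves \eqref{21:06}, and shows $b^*$ maximizes $J_r$, as claimed. The only genuinely delicate step is the two-mode averaging: one must combine the contributions $m=1$ and $m=2$ so precisely that their angular weights sum to the constant $\tfrac1\pi$, which is exactly what guarantees that replacing $a^*$ by its angular average does not decrease any of the radial infimands of $J$. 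Everything else is routine once this identity is in place.
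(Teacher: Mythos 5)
Your proposal is correct and follows essentially the same route as the paper's proof: existence by weak-star compactness and upper semicontinuity, the radial embedding for $\max_{\overline{\mathcal{U}}_L} J \geq \max_{\overline{\mathcal{U}}_L^r} J_r$, and angular averaging of a maximizer for the converse. Your two-mode estimate $\min_{m} I_{j,k,m}(a^*)\leq \tfrac12\bigl(I_{j,k,1}(a^*)+I_{j,k,2}(a^*)\bigr)$, based on the identity $Y_{j,1}^2+Y_{j,2}^2\equiv\tfrac1\pi$, is exactly what the paper's inequality \eqref{deuxineq} accomplishes by taking $t=\tfrac12$ in the convexified infimum over $t\in[0,1]$.
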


\begin{proof}[Proof of Lemma \ref{lem6}]
Since the functional $J_r$ is concave and upper semi-continuous (as the infimum of continuous linear functionals) for the weak star topology of $L^\infty$, and since ${\mathcal{U}}_L^r$ is compact for this topology, it follows that the problem \eqref{radialpb} has at least one solution $b^*$.

First of all, let us note that, if a function $a\in \overline{\mathcal{U}}_L$ does not depend on $\theta$, then, setting $b(r)=a(r,0)$, the constraint $\int_0^{2\pi}\!\!\int_0^1 a(r,\theta) r\, dr \, d\theta = L\pi$ yields $\int_0^1 b(r) r\, dr = \frac{L}{2}$, that is $b\in \overline{\mathcal{U}}_L^r$, and using \eqref{Jdisk} and the Fubini theorem, we get clearly the equality $J(a) = J_r(b)$.
Therefore, we get
$$
\sup_{a\in \overline{\mathcal{U}}_L} J(a) \geq \sup_{b\in \overline{\mathcal{U}}_L^r} J_r(b) .
$$

Let us prove the converse inequality. Let $a\in \overline{\mathcal{U}}_L$ arbitrary. Settting $b(r)=\frac{1}{2\pi}\int_0^{2\pi}a(r,\theta)\,d\theta$, we have clearly $b\in \overline{\mathcal{U}}_L^r$. On the one hand, we can write
\begin{multline*}
J(a) =   \inf \Bigg( 
 \inf_{k\geq 1} \frac{\gamma_{0,k}(T)}{2\pi} \int_0^{2\pi}\!\!\! \int_0^1 a(r,\theta) R_{0,k}(r)^2 r\, dr \, d\theta , \\
 \inf_{\substack{j,k\geq 1\\ t\in [0,1]}} \frac{\gamma_{j,k}(T,\alpha)}{\pi} \int_0^{2\pi}\!\!\! \int_0^1 a(r,\theta) R_{j,k}(r)^2(t\cos^2(j\theta)+(1-t)\sin^2(j\theta)) r\, dr \, d\theta  \Bigg) ,
\end{multline*}
and on the other hand, we have
\begin{equation}\label{deuxineq}
\begin{split}
& \inf_{\substack{j,k\geq 1\\ t\in [0,1]}} \frac{\gamma_{j,k}(T,\alpha)}{\pi} \int_0^{2\pi}\!\!\! \int_0^1 a(r,\theta) R_{j,k}(r)^2(t\cos^2(j\theta)+(1-t)\sin^2(j\theta)) r\, dr \, d\theta \\
\leq\ & \inf_{j,k\geq 1} \frac{\gamma_{j,k}(T,\alpha)}{\pi} \int_0^{2\pi}\!\!\! \int_0^1 a(r,\theta) R_{j,k}(r)^2\left(\frac{1}{2}\cos^2(j\theta)+\frac{1}{2}\sin^2(j\theta)\right) r\, dr \, d\theta \\
\leq\ & \inf_{j,k\geq 1} \gamma_{j,k}(T,\alpha) \int_0^1 b(r) R_{j,k}(r)^2 r\, dr \, d\theta .
\end{split}
\end{equation}
We infer that $J(a) \leq J_r(b)$, and then the converse inequality indeed follows.

We have proved \eqref{21:06}.


Now, let $a^*\in \overline{\mathcal{U}}_L$ be a maximizer of $J$. We define the function $\bar a^*\in \overline{\mathcal{U}}_L$ by $\bar a^*(r,\theta) = \frac{1}{2\pi}\int_0^{2\pi} a^*(r,\Theta)\, d\Theta$. The function $\bar a^*$ does actually not depend on $\theta$, and we define also the function $\bar b\in \overline{\mathcal{U}}_L^r$ by $\bar b(r) = \bar a^*(r,0)$.
Using \eqref{deuxineq}, we get $J(a^*) \leq J(\bar a^*)= J_r(\bar b)\leq J_r(b^*)$. The statement follows.
%
\end{proof}



\begin{remark}\label{rem_rad}
In addition to Lemma \ref{lem6}, we note that, if the radial problem \eqref{radialpb} has a unique solution, which is moreover the characteristic function of some measurable subset $\omega_r^*$ of $[0,1]$ (and this is what we will prove in the sequel), then necessarily the functional $J$ has a unique maximizer as well, which is the characteristic function of the set $\omega^*=\omega_{r}^*\times [0,2\pi]$ in polar coordinates.

Indeed, let $a^*\in\overline{\mathcal{U}}_L$ be a maximizer of $J$. Then, according to Lemma \ref{lem6}, the function $b^*\in\overline{\mathcal{U}}_L^r$ defined by $b^*(r)=\frac{1}{2\pi}\int_0^{2\pi} a^*(r,\theta) r \, dr \, d\theta$ is a maximizer of $J_r$, and therefore $b^*=\chi_{\omega_r^*}$. Then, for almost every $r\in[0,1]$ we have $\frac{1}{2\pi}\int_0^{2\pi} a^*(r,\theta) r \, dr \, d\theta = \chi_{\omega_r^*}(r)$, and since $0\leq a^*(r,\theta)\leq 1$ it follows that $a^*(r,\theta) = \chi_{\omega_r^*}(r)$. In other words, we have $a^*=\chi_{\omega^*}$ with $\omega^*=\omega_{r}^*\times [0,2\pi]$.
\end{remark}

Note that, at least at this step, Lemma \ref{lem6} does not imply that any maximizer of $J$ is radial; but it implies that there always exists a radial maximizer, that is, a function maximizing $J$ and that does not depend on $\theta$.

However, in what follows, we will eventually prove that the radial problem \eqref{radialpb} has indeed a unique solution, which is moreover the characteristic function of a set $\omega_r^*$.
Then, according to Remark \ref{rem_rad}, this will finally imply that $J$ has a unique maximizer $a^*=\chi_{\omega^*}$ with $\omega^*=\omega_{r}^*\times [0,2\pi]$ in polar coordinates.
The properties of $\omega^*$ stated in Theorem \ref{theoLBSC3} will then follow from the properties of the set $\omega_{r}^*$ that we will establish hereafter.

\medskip

We distinguish between two cases, depending on the value of $\alpha$. The case $\alpha>1/2$ is much easier to treat.


\subsubsection{Case $\alpha>1/2$}
Although we could make a direct proof, we already know, according to Theorem \ref{propLBSC1}, that the assumption $\Hdeux$ holds true. 
Then, according to Theorem \ref{mainTheo}, we have $a^*=\chi_{\omega^*}$ with $\chi_{\omega^*}\in\mathcal{U}_L$. The fact that $\omega^*$ has a finite number of connected components also follows from Theorem \ref{mainTheo}.

The same arguments can be applied to the radial problem \eqref{radialpb}.
More precisely, note first that we have
\begin{equation}\label{LBSCradial}
\liminf_{j+k\rightarrow +\infty}\gamma_{j,k}(T,\alpha)\int_{0}^1b(r ) R_{j,k}(r )^2 r\, dr > \gamma_{1,1}(\alpha,T) .
\end{equation}
Indeed it suffices to apply $\Hdeux$ to a radial function. In other words, \eqref{LBSCradial} is the radial version of $\Hdeux$. Then, under this condition, the proof of Theorem \ref{mainTheo} can be straightforwardly adapted to the radial problem and leads to the following conclusion: the maximizer $b^*$ of $J_r$ is unique and is the characteristic function of a measurable subset $\omega_r^*$ of $[0,1]$, with $\chi_{\omega_r^*} \in \mathcal{U}_L^r$. Moreover there exist $N_{0}\in \N^*$, nonnegative real numbers $(\alpha_{jk}^*)_{0\leq j\leq N_{0},1\leq k\leq N_{0}}$ of sum $1$, and $\lambda^*>0$ such that
$$
\omega^*_r=\left\{\sum_{j=0}^{N_{0}}\sum_{k=1}^{N_{0}}\gamma_{j}(T)\alpha_{jk}^*R_{j,k}(r )^2> \lambda^*\right\}.
$$
Since the functions $R_{j,k}$ are analytic and vanish at $r=1$, the set $\omega_r^*$ is the union of a finite number of intervals that are at a positive distance of $1$.
Moreover, the Lebesgue measure of $\partial\omega^*_r$ is equal to $0$ (indeed $\omega^*_r$ is a finite union of intervals).

By uniqueness, we conclude that $\omega^*=\omega_{r}^*\times [0,2\pi]$ in polar coordinates.

Besides, recall that, for every $k\in\N^*$, the sequence of probability measures $R_{j,k}(r)^2r\, dr$ converges vaguely to the Dirac at $r=1$ as $j$ tends to $+\infty$.
This fact accounts for the phenomenon of whispering galleries, and says that the Dirac along the boundary is a semi-classical measure (quantum limit) in the disk.
Then, from the Portmanteau theorem (note that the Lebesgue measure of $\partial\omega^*_r$ is equal to $0$), we get 
$$
\lim_{j\rightarrow +\infty} \int_{\omega^*_r} R_{j,k}(r)^2 r\, dr = 0,
$$
for every $k\in\N^*$. The additional property $\liminf_{j\to +\infty} \int_{\omega^*} \phi_{j,k,m}(x)^2 \, dx = 0$ for every $k\in\N^*$ follows.

\subsubsection{Case $0<\alpha< 1/2$ (or $\alpha=1/2$ and $T$ small enough).}
This is the most difficult case to deal with.

\paragraph{First estimates.}
Let us first prove the following lemma, providing an exponential estimate of the functions $R_{j,k}$ (in the spirit of estimates derived in \cite{grebenkov}).

\begin{lemma}\label{lem1_Rjk}
For every $h\in (0,1)$, for every $k\in\N^*$ there exists a constant $C_k>0$ such that
\begin{equation}\label{estim1_Rjk}
R_{j,k}(r)^2\leq C_k  j^{4/3} \exp\left( -C_kjh^{3/2} \right) .
\end{equation}
for every $r\in [0,1-h]$, and for every $j\in \N^*$.
\end{lemma}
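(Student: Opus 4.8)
The plan is to reduce \eqref{estim1_Rjk} to two ingredients: a pointwise upper bound on the Bessel function $J_j(z_{j,k}r)$ in the exponentially small (sub–turning-point) regime, and a lower bound on the normalizing quantity $|J_j'(z_{j,k})|$ entering \eqref{def_Rjk}. Indeed, from \eqref{def_Rjk} one has $R_{j,k}(r)^2 = 2\,J_j(z_{j,k}r)^2 / J_j'(z_{j,k})^2$, so an upper bound on the numerator together with a lower bound on the denominator immediately yields the claim.

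First I would record the large-order asymptotics of the zeros: for each fixed $k$, one has $z_{j,k} = j + O(j^{1/3})$ as $j\to+\infty$, so that $z_{j,k}/j \to 1$. Consequently, for $r\in[0,1-h]$ and $j$ large enough (depending on $h$), the scaled argument $z = z_{j,k}r/j$ satisfies $z\leq 1-h/2$; that is, it lies in the classically forbidden region $z<1$ in which $J_j(jz)$ is exponentially small. The finitely many remaining small values of $j$ contribute only a bounded factor on the compact set $[0,1-h]$, which is absorbed into the constant.

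The core estimate is the uniform (Debye/Airy-type) bound for Bessel functions of large order: for $0<z<1$ one has $J_\nu(\nu z)\leq \frac{C}{\sqrt{\nu}\,(1-z^2)^{1/4}}\exp\bigl(-\tfrac{2}{3}\nu\,\zeta(z)^{3/2}\bigr)$, where $\zeta(z)>0$ is Olver's variable determined by $\tfrac{2}{3}\zeta^{3/2} = \log\frac{1+\sqrt{1-z^2}}{z} - \sqrt{1-z^2}$. Squaring and applying this with $\nu=j$ and $z=z_{j,k}r/j$ gives $J_j(z_{j,k}r)^2\leq \frac{C}{j\sqrt{1-z^2}}\exp\bigl(-\tfrac{4}{3}j\,\zeta(z)^{3/2}\bigr)$. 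For the denominator I would use the turning-point asymptotics: since $z_{j,k}$ is the $k$-th zero of $J_j$, it sits exactly at the $k$-th scaled Airy zero, whence $J_j'(z_{j,k})^2\sim c_k\,j^{-4/3}$ for some $c_k>0$; this is precisely the source of the algebraic factor $j^{4/3}$ in the statement. Finally, since $\zeta$ is decreasing on $(0,1)$ with $\zeta(z)\sim 2^{1/3}(1-z)$ as $z\to 1$, one has $\zeta(z)^{3/2}\geq \zeta(1-h/2)^{3/2}\geq c\,h^{3/2}$ on the relevant range, which produces the rate $\exp(-Cjh^{3/2})$. Combining the three bounds and absorbing the remaining polynomial and $(1-z^2)^{-1/2}$ factors, together with the small-$j$ contributions, into a constant $C_k$ (depending on $h$ and $k$) yields \eqref{estim1_Rjk}.

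The main difficulty is making the large-order Bessel asymptotics genuinely uniform in $j$ and quantitative enough: one must control $J_j(z_{j,k}r)$ simultaneously in the order $j$, in the argument, and in its position relative to the turning point $r_*=j/z_{j,k}=1+O(j^{-2/3})$, and then extract the sharp $3/2$-power rate. The latter reflects the Agmon distance $\int_{1-h}^{r_*}\sqrt{j^2/s^2-z_{j,k}^2}\,ds\sim j\,h^{3/2}$ to the linearly vanishing turning point. An alternative, more self-contained route avoiding Olver's expansion is a WKB/Agmon estimate applied directly to the radial Bessel equation $u''+\tfrac1r u'+(z_{j,k}^2-j^2/r^2)u=0$ satisfied by $R_{j,k}$, exploiting that the coefficient is negative (the non-oscillatory regime) for $r<r_*$; this yields the same exponential weight, with the matching algebraic factor coming from the boundary layer near $r_*$.
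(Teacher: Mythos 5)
Your proof is correct, and it shares the paper's basic architecture --- write $R_{j,k}^2 = 2J_j(z_{j,k}r)^2/J_j'(z_{j,k})^2$ via \eqref{def_Rjk}, observe that for $r\le 1-h$ and $j$ large the scaled argument stays below $1-h/2$, i.e.\ in the forbidden region, derive exponential smallness of the numerator there, and control the normalization algebraically --- but the two halves are carried out with different classical inputs. For the numerator the paper does not use Olver/Debye uniform asymptotics but Kapteyn's inequality $J_j(jy)\le\exp(jg(y))$, $y\in[0,1]$, with $g(y)=\sqrt{1-y^2}-\log\frac{1+\sqrt{1-y^2}}{y}$ (cited from \cite{Siegel}); since $g=-\frac{2}{3}\zeta^{3/2}$ this is exactly your exponent, but being a genuine non-asymptotic inequality valid for every order and every argument it completely bypasses the uniformity-in-$j$ issue that you correctly single out as the main difficulty of your route --- no matching near the turning point is needed. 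For the denominator the paper avoids the Airy-zero asymptotics $|J_j'(z_{j,k})|\sim c_k j^{-2/3}$ and instead bounds $\int_0^1 J_j(z_{j,k}r)^2\, r\,dr$ from below elementarily, using that $J_j$ is increasing on $[0,z'_{j,1}]$, that $J_j(j)\ge C_1 j^{-1/3}$, that $z'_{j,1}-j\sim\gamma_1' j^{1/3}$, and that $z_{j,k}\le\pi(j+k)$, which yields a lower bound of order $j^{2/3}/(j+k)^2$, i.e.\ of order $j^{-4/3}$ for fixed $k$. Your denominator treatment is nonetheless sound, and is in fact precisely the computation the paper performs later in the proof of Lemma \ref{lem_RjkBurq} (via \cite[9.3.27]{AS}, \cite{Olver} and \cite{Krasikov}). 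Net comparison: your route gives a slightly sharper algebraic prefactor ($j^{1/3}$ up to an $h$-dependent constant, instead of $j^{4/3}$), at the price of needing the quantitative uniform turning-point bound to be made rigorous; the paper's route is cruder in the prefactor but rests on two ready-made, fully uniform facts. Both arguments dispose of the finitely many small $j$ in the same way, by absorbing them into the constant $C_k$.
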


Note that the estimate \eqref{estim1_Rjk} provides an account for the whispering galleries phenomenon, according to which the eigenfunctions of the Dirichlet-Laplacian in the unit disk tend to concentrate along the boundary of the disk as the index $j$ tends to $+\infty$.
This estimate says that this concentration is exponential.

Note that we will later need to extend the result of that lemma (see Lemma \ref{lem9} further), by proving that the estimate \eqref{estim1_Rjk} actually holds true for a larger set of indices. But for the moment this statement is enough.

\begin{proof}[Proof of Lemma \ref{lem9}]
We will use the so-called Kapteyn inequality, proved in \cite{Siegel}, and stating that
\begin{equation}\label{Kapteyn}
J_j(jy) \leq \exp ( jg(y) ) ,
\end{equation}
for every $y\in[0,1]$, with
$$
g(y) = \sqrt{1-y^2}-\log\frac{1+\sqrt{1-y^2}}{y}.
$$
The function $g:(0,1]\rightarrow (-\infty,0]$ is smooth, increasing, and $g(1)=0$.
Besides, for every $j\in\N^*$ the Bessel function $x\mapsto J_j(x)$ is known to be increasing on the interval $[0,z'_{j,1}]$, where $z'_{j,1}$ is the first positive zero of $J_j'$. Moreover it is known that
$
z'_{j,1} = j + \gamma_1' j^{1/3} + o(j^{1/3}),
$
with $\gamma_1'>0$ (see \cite{Olver}), and that
$\frac{C_1}{j^{1/3}}\leq J_j(j)\leq 1$
(see \cite{Watson}). It follows that $\frac{C_1}{j^{1/3}}\leq J_j(x)$ whenever $j\leq x\leq z'_{j,1}$, and thus that
\begin{equation}\label{tech19:22}
J_j(z_{j,k}r) \geq \frac{C_1}{j^{1/3}}\qquad\forall r\in\left[ \frac{j}{z_{j,k}},\frac{z'_{j,1}}{z_{j,k}}\right].
\end{equation}
Using the inequality $z_{j,k}\leq \pi(j+k)$ (see \cite[Lemma 5]{Kelliher}), we infer from \eqref{tech19:22} that
\begin{equation}\label{ALR1}
\int_0^1 J_j(z_{j,k}r)^2 r dr \geq \frac{C_1}{j^{2/3}} \frac{{z'_{j,1}}^2-j^2}{z_{j,k}^2} \geq \frac{2C_1\gamma_1'}{\pi^2} \frac{j^{2/3}}{(j+k)^2} .
\end{equation}
Besides, recall that, for every $k\in\N^*$ fixed, we have
\begin{equation*}
z_{j,k} = j + \delta_k j^{1/3} + o(j^{1/3}),
\end{equation*}
with $\delta_k>0$ (see \cite{Olver}). Then, for every $r\in [0,1-h]$, we write $z_{j,k}r = j y$ with $y=\frac{z_{j,k}}{j}r$, and we get
$$
y = \frac{z_{j,k}}{j}r \leq (1-h) \left( 1+\frac{\delta_k}{j^{2/3}}+o(j^{-2/3}) \right)
\leq 1-\frac{h}{2}
$$
whenever $j$ is large enough. Therefore, if $j$ is large enough then we get, using the Kapteyn inequality \eqref{Kapteyn} and the fact that $g$ is increasing, that
$$
\vert J_j(z_{j,k} r)\vert \leq \exp \left( j g \Big( 1-\frac{h}{2} \Big) \right),
$$
for every $r\in[0,1-h]$.
Using an asymptotic expansion of $g$, we get that
\begin{equation}\label{ALR2}
\vert J_j(z_{j,k} r)\vert \leq \exp \left( -\frac{1}{3} j h^{3/2} + o(j h^{3/2}) \right),
\end{equation}
for every $r\in[0,1-h]$.
Since $R_{j,k}(r)^2 = J_j(z_{j,k} r)^2 / \int_0^1 J_j(z_{j,k}r)^2 r \, dr$, the estimate \eqref{estim1_Rjk} of the lemma finally follows by combining \eqref{ALR1} with \eqref{ALR2}.
\end{proof}

In what follows, we consider a maximizer $b^*\in\overline{\mathcal{U}}_L^r$ of $J_r$.

\begin{lemma}\label{lemALR}
Neither the assumption $\Hdeux$ nor its weakened version \eqref{LBSC_weakened} are satisfied for the radial problem \eqref{radialpb}. 
\end{lemma}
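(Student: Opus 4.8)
The engine of the proof is the exponential localization of the radial eigenfunctions established in Lemma~\ref{lem1_Rjk}: on any interval $[0,1-h]$ the quantity $R_{j,k}(r)^2$ decays like $e^{-C_k j h^{3/2}}$ as $j\to\infty$, whereas, in the regime $2\alpha\leq 1$, the weights $\gamma_{j,k}(T,\alpha)$ grow only sub-exponentially in $j$. The plan is to exploit this mismatch along the whispering-gallery subsequence (index $k$ fixed, $j\to+\infty$), for which $z_{j,k}\leq\pi(j+k)$ and hence $\gamma_{j,k}(T,\alpha)\leq C\,e^{2(\pi(j+k))^{2\alpha}T}$. When $2\alpha<1$ one has $(\pi(j+k))^{2\alpha}=o(j)$, so the linear-in-$j$ decay coming from \eqref{estim1_Rjk} dominates and the product $\gamma_{j,k}(T,\alpha)\int_0^{1-h}R_{j,k}(r)^2\,r\,dr$ tends to $0$ for every fixed $k$ and every $h>0$. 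This negative result is exactly what shows that Theorem~\ref{mainTheo} cannot be invoked here and motivates the hard analysis that follows.

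First I would show that $\Hdeux$ fails, by a direct construction. Choose $\epsilon\in(0,1-\sqrt{L})$, which is possible since $L<1$, and take $b=\chi_{[0,\sqrt{L}]}$, so that $b\in\overline{\mathcal{U}}_L^r$ (indeed $\int_0^1 b(r)\,r\,dr=L/2$) and $b$ is supported at distance $1-\sqrt{L}>0$ from the boundary. Fixing $k=1$ and letting $j\to+\infty$, the estimate \eqref{estim1_Rjk} with $h=1-\sqrt{L}$ gives $\int_0^1 b(r)R_{j,1}(r)^2\,r\,dr\leq \tfrac12 C_1 j^{4/3}e^{-C_1 j(1-\sqrt{L})^{3/2}}$, and multiplying by $\gamma_{j,1}(T,\alpha)\leq C\,e^{2(\pi(j+1))^{2\alpha}T}$ yields a quantity tending to $0$. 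Hence $\liminf_{j+k\to+\infty}\gamma_{j,k}(T,\alpha)\int_0^1 b(r)R_{j,k}(r)^2\,r\,dr=0<\gamma_{1,1}(T,\alpha)$, so the radial form of $\Hdeux$ is violated.

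Next I would rule out the weakened assumption \eqref{LBSC_weakened} by contradiction. Suppose it held for the maximizer $b^*$ of $J_r$. Then, exactly as in the case $\alpha>1/2$ (Remark~\ref{rem4} together with the radial adaptation of the proof of Theorem~\ref{mainTheo}), $b^*$ would be a characteristic function $\chi_{\omega_r^*}$ with $\omega_r^*$ determined by finitely many modes, of the form $\{r:\sum_{j,k\leq N_0}\alpha_{jk}^*\gamma_{j,k}(T,\alpha)R_{j,k}(r)^2>\lambda^*\}$. Since every $R_{j,k}$ is analytic and vanishes at $r=1$ (because $z_{j,k}$ is a zero of $J_j$), this superlevel set stays at a positive distance $\eta>0$ from the boundary, i.e.\ $\omega_r^*\subset[0,1-\eta]$. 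Feeding $h=\eta$ into the computation of the previous paragraph then gives $\liminf_{j+k\to+\infty}\gamma_{j,k}(T,\alpha)\int_0^1 b^*(r)R_{j,k}(r)^2\,r\,dr=0$, contradicting \eqref{LBSC_weakened}. Since $\Hdeux$ restricted to $b^*$ is precisely \eqref{LBSC_weakened}, this argument also re-proves the failure of $\Hdeux$.

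The main obstacle is the borderline case $\alpha=1/2$. There the growth of $\gamma_{j,k}(T,\alpha)$ along the subsequence is genuinely exponential, $e^{2z_{j,1}T}\sim e^{2jT}$, and it competes on equal footing with the decay rate $e^{-\frac{2}{3}j\eta^{3/2}}$ read off from \eqref{ALR2}. The product still tends to $0$, but only once $T<\tfrac13\eta^{3/2}$, which is exactly why the statement requires $T$ to be small; the delicate point is that $\eta$ is produced by the a priori unknown optimal density $b^*$, so one must secure a lower bound on this boundary separation that is compatible with the smallness of $T$. For $\alpha<1/2$ no such restriction is needed, the sub-exponential growth being dominated by the exponential decay for every $T>0$ and every $\eta>0$.
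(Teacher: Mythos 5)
Your proof is correct and, for the essential half of the statement --- the failure of the weakened condition \eqref{LBSC_weakened} --- it is the same argument as the paper's: assume \eqref{21:11} for the maximizer $b^*$, invoke Remark \ref{rem4} and the radial adaptation of Theorem \ref{mainTheo} (exactly as in the case $\alpha>1/2$) to conclude that $b^*=\chi_{\omega_r^*}$ with $\omega_r^*$ a finite union of intervals contained in $(0,1-h)$ for some $h>0$, and then contradict \eqref{21:11} by playing the exponential decay of Lemma \ref{lem1_Rjk} on $[0,1-h]$ against the growth of $\gamma_{j,k}(T,\alpha)$ along the subsequence $k$ fixed, $j\to+\infty$. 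Where you differ is in disproving $\Hdeux$ itself: the paper obtains it for free, since \eqref{LBSC_weakened} is weaker than $\Hdeux$, whereas you also give a direct construction with the explicit test function $b=\chi_{[0,\sqrt{L}]}$. That addition is worthwhile: it makes the failure of $\Hdeux$ independent of the minimax and structure machinery, and in the borderline case $\alpha=1/2$ it yields an explicit smallness threshold for $T$, depending only on $L$ and the Bessel constants, whereas the contradiction route only requires $T$ to be small relative to the boundary separation $h$ of the hypothetical optimal set, a quantity not known a priori. You are right to single out this last point as the delicate one: the paper's proof says only ``$\alpha=1/2$ and $T$ small enough'' and passes over that dependence silently, so your write-up is, if anything, more candid about what is actually established in the borderline case.
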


\begin{proof}[Proof of Lemma \ref{lemALR}.]
We argue by contradiction, assuming that
\begin{equation}\label{21:11}
\liminf_{j+k\rightarrow +\infty} \gamma_{j,k}(T,\alpha)\int_0^1 b^*(r) R_{j,k}(r )^2 r\, dr>\gamma_{1,1}(T,\alpha). 
\end{equation}
Using the same arguments as for the case $\alpha>1/2$, it follows that the maximizer $b^*$ of $J_r$ is unique and is the characteristic function of a measurable subset $\omega_r^*$ of $[0,1]$, with $\chi_{\omega_r^*} \in \mathcal{U}_L^r$. Moreover the optimal set $\omega_r^*\subset[0,1]$ must consist of a finite number of intervals that are at a positive distance from $1$. The important fact that we note here is the fact that there exists $h>0$ such that $\omega_r^*\subset(0,1-h)$.

From the expansion $z_{j,k} = j + \delta_k j^{1/3} + o(j^{1/3})$ (already used), it follows that, for $k$ fixed and $j$ large enough, we have $j \leq z_{j,k} \leq 2j$. Then, using the estimate \eqref{estim1_Rjk} of Lemma \ref{lem1_Rjk}, the expression \eqref{gammajkdisk} of $\gamma_{j,k}(T,\alpha)$, and the inequalities $j^{2\alpha}<\lambda_{j,k}=z_{j,k}^{2\alpha}<(2j)^{2\alpha}$ for $j$ large enough and $k$ fixed, we infer that
\begin{equation*}
\gamma_{j,k}(T,\alpha)\int_{\omega_r^*} R_{j,k}(r )^2 r\, dr  \leq
\frac{e^{2T(2j)^{2\alpha}}}{j^{2\alpha}}  C\frac{e^{-Cjh^{3/2}}}{j^{4/3}}
 \leq  C \exp\left(2T(2j)^{2\alpha} - Cjh^{3/2} \right),
\end{equation*}
and therefore, since we have either $\alpha<1/2$ or $\alpha=1/2$ and $T$ small enough, we get
$$
\lim_{j\to +\infty}\gamma_{j,k}(T,\alpha)\int_{\omega_r^*} R_{j,k}(r )^2 r\, dr=0 ,
$$
for every $k\in\N^*$, which raises a contradiction with \eqref{21:11}. It follows that neither the assumption $\Hdeux$ nor its weakened version \eqref{LBSC_weakened} are satisfied. 
\end{proof}

\begin{lemma}\label{lem_adherence1}
For every $h\in(0,1)$, the restriction of $b^*$ to the interval $[1-h,1]$ is nontrivial.
\end{lemma}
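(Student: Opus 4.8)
The plan is to argue by contradiction, playing the whispering-gallery localization of the radial eigenfunctions (Lemma \ref{lem1_Rjk}) against the growth of the weights $\gamma_{j,k}(T,\alpha)$, which in the regime $\alpha<1/2$ (or $\alpha=1/2$ with $T$ small) is too weak to compensate. So suppose the conclusion fails: there exists $h\in(0,1)$ with $b^*=0$ almost everywhere on $[1-h,1]$, i.e. $\mathrm{supp}(b^*)\subset[0,1-h]$.

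First I would record a cheap lower bound on the optimal value that does not see the support of $b^*$. The constant function $b\equiv L$ lies in $\overline{\mathcal{U}}_L^r$, since $\int_0^1 L\,r\,dr=L/2$, and because each $R_{j,k}$ is normalized in $L^2(r\,dr)$ one has $\int_0^1 L\,R_{j,k}(r)^2\,r\,dr=L$, whence $J_r(L)=L\inf_{j,k}\gamma_{j,k}(T,\alpha)$. As $\gamma_{j,k}(T,\alpha)$ is increasing in $\lambda_{j,k}>0$, this infimum is attained at the smallest eigenvalue and is a positive constant; since $b^*$ maximizes $J_r$ over $\overline{\mathcal{U}}_L^r$, this yields $J_r(b^*)\geq J_r(L)>0$.

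Then I would show that the support assumption forces $J_r(b^*)=0$, the sought contradiction. Fix $k\in\N^*$. Combining the estimate \eqref{estim1_Rjk} of Lemma \ref{lem1_Rjk} on $[0,1-h]$ with the expression \eqref{gammajkdisk} of $\gamma_{j,k}(T,\alpha)$ and the bounds $j\leq z_{j,k}\leq 2j$ for $j$ large ($k$ fixed), exactly as in the proof of Lemma \ref{lemALR} one gets
\begin{equation*}
\gamma_{j,k}(T,\alpha)\int_0^1 b^*(r)R_{j,k}(r)^2\,r\,dr
=\gamma_{j,k}(T,\alpha)\int_0^{1-h} b^*(r)R_{j,k}(r)^2\,r\,dr
\leq C\exp\!\left(2T(2j)^{2\alpha}-Cjh^{3/2}\right),
\end{equation*}
and since either $\alpha<1/2$, or $\alpha=1/2$ and $T$ is small enough, the right-hand side tends to $0$ as $j\to+\infty$. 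Therefore $J_r(b^*)=\inf_{j,k}\gamma_{j,k}(T,\alpha)\int_0^1 b^*R_{j,k}^2\,r\,dr=0$, contradicting the previous step. Hence no such $h$ exists, and $b^*$ is nontrivial on $[1-h,1]$ for every $h\in(0,1)$.

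The only delicate point is the competition of the two exponentials in the displayed estimate: it is precisely the hypothesis $\alpha<1/2$ (so that $2T(2j)^{2\alpha}=o(j)$) or $\alpha=1/2$ with $T$ small that lets the whispering-gallery decay $-Cjh^{3/2}$ dominate. This is the same mechanism already used for Lemma \ref{lemALR}, so no new estimate is needed; everything reduces to Lemma \ref{lem1_Rjk}, the $L^2(r\,dr)$-normalization of the $R_{j,k}$, and the positivity of $\inf_{j,k}\gamma_{j,k}(T,\alpha)$.
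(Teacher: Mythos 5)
Your proof is correct and follows essentially the same route as the paper's: argue by contradiction and combine the whispering-gallery estimate of Lemma \ref{lem1_Rjk} with the bounds $j\leq z_{j,k}\leq 2j$ and the expression \eqref{gammajkdisk}, exactly as in the proof of Lemma \ref{lemALR}, to force $J_r(b^*)=0$. The only difference is that you spell out why $J_r(b^*)=0$ is absurd (via the competitor $b\equiv L$ and the $L^2(r\,dr)$-normalization of the $R_{j,k}$), a step the paper leaves implicit with ``which is absurd''.
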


\begin{proof}[Proof of Lemma \ref{lem_adherence1}.]
We argue by contradiction. Assume that there exists $h\in(0,1)$ such that $b^*(r)=0$ for every $r\in[1-h,1]$.
Then, as in the proof of Lemma \ref{lemALR}, we get
\begin{equation*}
\gamma_{j,k}(T,\alpha)\int_0^1 b^*(r) R_{j,k}(r )^2 r\, dr
\leq \gamma_{j,k}(T,\alpha)\int_0^{1-h} R_{j,k}(r )^2 r\, dr
 \leq  C \exp\left(2T(2j)^{2\alpha} - Cjh^{3/2} \right),
\end{equation*}
which converges to $0$ as $j$ tends to $+\infty$. It follows that $J_r(b^*)=0$, which is absurd.
\end{proof}


We are next going to prove that $b^*$ is unique, and is the characteristic function of some subset.

\paragraph{Existence and uniqueness of an optimal domain for the radial problem.}
First of all, setting
$$
\mathcal{S}=\left\{\beta=(\beta_{j,k})_{(j,k)\in\N\times\N^*}\in \ell^1(\R_{+}) \mid \sum_{(j,k)\in\N\times\N^*}\beta_{j,k}=1\right\} ,
$$
we clearly have the equality (by `convexifying" the infimum over discrete indices)
$$
J_r(b) = \inf_{(j,k)\in\N\times\N^*}  \gamma_{j,k}(T,\alpha)\int_0^1 b(r) R_{j,k}(r)^2\, rdr
=  \inf_{\beta\in\mathcal{S}}F(b,\beta),
$$
with
$$
F(b,\beta)=\sum_{(j,k)\in\N\times\N^*}\gamma_{j,k}(T,\alpha)\beta_{j,k}\int_0^1 b(r)R_{j,k}(r)^2\, rdr ,
$$
for every $b\in \overline{\mathcal{U}}_L^r$.
Therefore, we have
$$
\sup_{b\in \overline{\mathcal{U}}_{L}^r}J_r(b)=\sup_{b\in \overline{\mathcal{U}}_{L}^r}\inf_{\beta\in\mathcal{S}}F(b,\beta).
$$

We are going to apply a minimax theorem to the functional $F$.
Clearly, the function $F$ is upper semi-continuous with respect to its first variable, lower semi-continuous with respect to its second variable and concave-convex.
To derive the existence of a saddle point, some compactness properties are required. The set $\overline{\mathcal{U}}_L^r$ is ($L^\infty$ weakly star) compact, however the set $\mathcal{S}$ is not compact, so there is a difficulty here. This difficulty can however be overcome by using an extension of Sion's minimax theorem due to \cite{hartung}, by noticing the fact that, although $\mathcal{S}$ is not compact, the function $F$ is however inf-compact. Indeed for $b(\cdot)=L$, one has
$$F(L,\beta)=L\sum_{(j,k)\in\N\times\N^*}\gamma_{j,k}(\alpha,T)\beta_{j,k},$$
and then, using the fact that $\lambda_{j,k}=z_{j,k}^{2\alpha}\geq (j+k)^{2\alpha}$ (see \cite[Lemma 5]{Kelliher} for the latter inequality) and thus that the coefficients $\gamma_{j,k}(T,\alpha)$ have an exponential increase, it is easy to prove that the set
$
\{\beta \in \mathcal{S}\mid F(L,\beta)\leq \lambda\}
$
is compact in $\ell^1(\R)$, for every $\lambda\in\R$. This is the inf-compactness property.
Then, it follows from \cite[Theorem 1]{hartung} that there exists a saddle point $(b^*,\beta^*)\in\overline{\mathcal{U}}_L^r\times\mathcal{S}$ of the functional $F$, which implies in particular that
\begin{equation}\label{saddleDisk}
J_r(b^*)= \max_{b\in \overline{\mathcal{U}}_{L}^r}F(b,\beta^*) 
= \max_{b\in \overline{\mathcal{U}}_{L}^r} \int_0^1  \psi(r) b(r) r\, dr ,
\end{equation}
with the function $\psi$ defined by
\begin{equation}\label{def_psi}
\psi(r) = \sum_{(j,k)\in\N\times\N^*}\gamma_{j,k}(T,\alpha)\beta_{j,k}^* R_{j,k}(r)^2 .
\end{equation}
In other words, the function $r\mapsto b^*(r) r$ has to maximize a given integral (under a volume constraint), and therefore is characterized in terms of the level sets of the function $\psi$. More precisely, there exists a unique $\xi>0$ (which can be interpreted as a Lagrange multiplier, as in \cite[Theorem 1]{PTZobspb1}) such that
$$ b^*(r) = \left\{ \begin{array}{rcl}
1 & \textrm{if} & \psi(r)>\xi,\\
0 & \textrm{if} & \psi(r)<\xi,
\end{array}\right.$$
and the values of $b^*(r)$ are not determined by such first-order conditions on subsets of positive measure along which $\psi(r)=\xi$. Another way of expressing the latter case is to write that $\psi(r)=\xi$ on the set $\bigcup_{\varepsilon\in (0,1)}\{\varepsilon \leq b^*\leq 1-\varepsilon\}$.

\begin{remark}\label{remark_minimax}
Another consequence of the minimax theorem is that
$$
J_r(b^*) = \min_{\beta\in\mathcal{S}} \sum_{(j,k)\in\N\times\N^*} \beta_{j,k} \gamma_{j,k}(T,\alpha) \int_0^1 b^*(r) R_{j,k}(r)^2 r\, dr ,
$$
from which it follows that, if $\beta_{j,k,m}^*>0$, then necessarily there must hold 
$$\gamma_{j,k}(T,\alpha) \int_0^1 b^*(r) R_{j,k}(r)^2 r \, dr  = J_r(b^*),$$
and, conversely, if $\gamma_{j,k}(T,\alpha) \int_0^1 b^*(r) R_{j,k}(r)^2 r \, dr > J_r(b^*)$ then there must hold $\beta_{j,k}^*=0$.
In other words, the support of the Lagrange multipliers $\beta_{j,k}^*$ coincides with the set of active constraints, as is well known in constrained optimization. This remark will be useful in the sequel.
\end{remark}

It can be noted that the above minimax argument could have been applied as well to the case $\alpha>1/2$ but then it does not give any additional information. Here, this argument is instrumental in order to prove that $b^*$ is a characteristic function, as proved in what follows.

Let us come back to the expression of $b^*$ in terms of the level sets of the function $\psi$.
As a consequence of that expression, if we are able to state that the function $\psi$ cannot be constant on any subset of positive measure, then the function $a^*$ can only take the values $0$ and $1$, and therefore $b^*=\chi_{\omega_r^*}$ is the characteristic function of some subset $\omega_r^*$ such that $\chi_{\omega_r^*}\in\mathcal{U}_L^r$.
Typically this nondegeneracy assumption is satisfied as soon as the function $\psi$ is analytic.
And indeed we have the following result.

\begin{proposition}\label{technicalLemma_AnaDisk}
The function $\psi$ defined by \eqref{def_psi} is analytic in $(0,1)$.
\end{proposition}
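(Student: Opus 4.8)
The plan is to exhibit $\psi$ as a locally uniformly convergent series of holomorphic functions and then invoke the Weierstrass theorem on uniform limits. Each summand $R_{j,k}(\cdot)^2 = 2J_j(z_{j,k}\,\cdot)^2/J_j'(z_{j,k})^2$ extends to an entire function of a complex variable, since $J_j$ is entire, so the partial sums of \eqref{def_psi} are holomorphic on $\C$. It therefore suffices to show that, for every $r_0\in(0,1)$, the series \eqref{def_psi} converges uniformly on some complex disk $D$ centered at $r_0$ with $\sup_{z\in D}\Real(z)\le 1-h$ (for a fixed $h>0$) and with small imaginary radius $\rho$. I would do this by the Weierstrass $M$-test, i.e. by proving $\sum_{(j,k)}\gamma_{j,k}(T,\alpha)\,\beta_{j,k}^*\,\sup_{z\in D}|R_{j,k}(z)^2|<+\infty$. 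Since $\sum_{(j,k)}\beta_{j,k}^*=1$, the whole matter reduces to a uniform bound $\gamma_{j,k}(T,\alpha)\sup_{z\in D}|R_{j,k}(z)^2|\le M$ over the indices that actually carry weight.

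The heart of the numerator estimate is a complex version of the whispering-gallery decay. I would extend the Kapteyn-type bound of Lemma \ref{lem1_Rjk} (in its uniform form, Lemma \ref{lem9}) from the real interval $[0,1-h]$ to the disk $D$, obtaining an estimate of the shape $\sup_{z\in D}|R_{j,k}(z)^2|\le C j^{4/3}\exp(-c\,j\,h^{3/2}+C'j\rho)$, where the term $C'j\rho$ accounts for the growth of $J_j(z_{j,k}z)$ in the imaginary direction. Choosing the imaginary radius $\rho$ small enough relative to $h^{3/2}$ makes the exponent negative, so $\sup_{z\in D}|R_{j,k}(z)^2|\le C j^{4/3}e^{-c'j}$. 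Combined with $\gamma_{j,k}(T,\alpha)\le \exp\bigl(2T(2j)^{2\alpha}\bigr)$ and the standing hypothesis $2\alpha<1$ (or $\alpha=1/2$ with $T$ small), the product $\gamma_{j,k}(T,\alpha)\sup_{z\in D}|R_{j,k}(z)^2|$ tends to $0$ as $j\to+\infty$, since the exponential decay in $j$ dominates the subexponential growth of $\gamma_{j,k}$. This is exactly the uniform bound needed for the $M$-test.

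The delicate point is that this decay only applies when the index is of genuine whispering-gallery type: modes with small $j$ and large $k$ have $R_{j,k}(z)^2$ growing like $e^{2z_{j,k}\rho}$ in the imaginary direction, which $\gamma_{j,k}(T,\alpha)$ cannot offset, so such modes must be shown to carry no weight. Here I would use the saddle-point structure recorded in Remark \ref{remark_minimax} together with equidistribution (semiclassical) information: if $\beta_{j,k}^*>0$ then $\gamma_{j,k}(T,\alpha)\int_0^1 b^*(r)R_{j,k}(r)^2 r\,dr=J_r(b^*)>0$, whereas for fixed $j$ and $k\to+\infty$ the probability measures $R_{j,k}(r)^2 r\,dr$ do not lose their mass to the boundary, so $\int_0^1 b^* R_{j,k}^2 r\,dr$ stays bounded away from $0$ while $\gamma_{j,k}(T,\alpha)\to+\infty$; this contradiction forces $\beta_{j,k}^*=0$ for all but finitely many $k$ in each column. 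Hence the support of $\beta^*$ is column-finite, and after removing a finite low-frequency block (a finite sum, trivially holomorphic) the remaining indices fall into the regime where the uniform complex estimate above applies, yielding the claimed uniform convergence and the analyticity of $\psi$.

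The main obstacle is precisely the uniform-in-$(j,k)$ complex Bessel estimate: extending the Kapteyn inequality and the Olver-type uniform asymptotics simultaneously to a complex neighborhood of $r_0$ and to the enlarged index range (the content of Lemma \ref{lem9}), while keeping all constants independent of $k$. Determining which modes lie in the support of $\beta^*$ — and thereby justifying the exclusion of the uncontrolled small-$j$, large-$k$ modes — rests on the knowledge of the quantum limits of the eigenfunctions and on Lemma \ref{lem_adherence1}, and it is this combination of fine Bessel asymptotics with semiclassical-measure arguments that makes the proof lengthy and technically demanding.
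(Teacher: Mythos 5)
Your overall strategy --- extend each $R_{j,k}^2$ to an entire function of a complex variable and deduce analyticity of $\psi$ from locally uniform convergence of the series on small complex disks (Weierstrass $M$-test) --- is a genuinely different route from the paper's, which never leaves the real line: the paper instead bounds \emph{all} derivatives, proving $\vert\psi^{(n)}(r)\vert\leq C^{n+1}n!$ on $(h,1-h)$ via an induction on the Bessel differential equation \eqref{odeRjk} (Lemmas \ref{lem9_4derivatives}, \ref{lem9_allderivatives} and \ref{lem_Rjkcarre}), and concludes by the classical characterization of analyticity through derivative bounds. Your route is viable in principle (Kapteyn's inequality does admit a complex-argument version, and the denominator bound \eqref{ALR1_bis} can be kept uniform over the whispering-gallery indices), and both routes agree on the decisive preliminary step: the series defining $\psi$ must first be reduced to the indices with $r^1_{j,k}\geq 1-\frac{h}{2}$, since only for those does any exponential decay estimate hold.

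It is exactly in this reduction step that your argument has a genuine gap. Your contradiction mechanism (if $\beta^*_{j,k}>0$ then $\gamma_{j,k}(T,\alpha)\int_0^1 b^*(r)R_{j,k}(r)^2 r\,dr=J_r(b^*)$ by Remark \ref{remark_minimax}, while the left-hand side blows up along the sequence) is run only along \emph{columns}, i.e.\ $j$ fixed and $k\to+\infty$. That yields column-finiteness of the support of $\beta^*$, which is strictly weaker than what your $M$-test requires. Column-finiteness is perfectly compatible with the support containing, say, the diagonal $\{(j,j)\ \vert\ j\in\N^*\}$: for such indices the ratio $j/k$ is bounded, $r^1_{j,k}$ stays bounded away from $1$, the measures $R_{j,j}(r)^2 r\,dr$ converge vaguely to a limit $f_s$ of \eqref{mesure_Burq} with $s<1$ (mass in the interior of $(0,1)$), and $R_{j,j}(z)^2$ grows exponentially in the imaginary direction on any complex disk --- so neither the whispering-gallery decay nor the $M$-test applies to them, and ``removing a finite low-frequency block'' cannot rescue the argument, since infinitely many such indices may remain. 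What is actually needed --- and what the paper proves as Lemma \ref{lem_activeindices} --- is that \emph{every} sequence of indices with $r^1_{j,k}<1-\frac{h}{2}$ (equivalently, $j/k$ bounded above), and not merely the vertical columns, is eventually inactive. The good news is that your own contradiction mechanism does prove this stronger statement once you run it along such arbitrary sequences, using precisely the ingredients you cite: the quantum limits \eqref{mesure_Burq}, the uniform $L^{3/2}$ bounds of Lemmas \ref{lem_limitsBurq} and \ref{lem_RjkBurq} (indispensable to pass from vague convergence to integration against the merely-$L^\infty$ function $b^*$, via a smooth approximation), and the nontriviality of $b^*$ near the boundary from Lemma \ref{lem_adherence1}. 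But as written, the exclusion of the uncontrolled modes does not follow from what you prove, and the proof does not close.
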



With this proposition, it follows that, necessarily, $b^*=\chi_{\omega_r^*}\in\mathcal{U}_L^r$.
Hence, at this step, we can say that there exists an optimal domain for the radial problem \eqref{radialpb}, and that any maximizer of \eqref{radialpb} is a characteristic function.

Let us prove that the optimal domain is unique (and thus, that \eqref{radialpb} has a unique maximizer). The functional $b\mapsto J_r(b)$ is concave on $\overline{\mathcal{U}}_{L}^r$, since it is defined as the infimum of linear functionals. Therefore, if there were to exist two distinct maximizers $\chi_{\omega_r^1}$ and $\chi_{\omega_r^2}$, then, for every $t\in (0,1)$, the function $t\mapsto \chi_{\omega_r^1}+(1-t)\chi_{\omega_r^2}$ would be a maximizer of $J_r$ as well. But this contradicts the fact that any maximizer of \eqref{radialpb} is a characteristic function.

We have thus proved that the radial problem \eqref{radialpb} has a unique optimal domain $\omega_r^*$. Moreover, since $\psi$ is analytic in $(0,1)$, $\omega_r^*$ is semi-analytic in $(0,1)$ and thus $\omega_r^*$ intersected with any proper compact subset of $(0,1)$ has a finite number of connected components.

Therefore, according to Remark \ref{rem_rad}, this implies that $J$ has a unique maximizer $a^*=\chi_{\omega^*}$ with $\omega^*=\omega_{r}^*\times [0,2\pi]$ in polar coordinates.
In particular, the intersection of $\omega^*$ with any compact ring which is a proper subset of the unit disk is the union of a finite number of rings.
Note that there remains a problem in the neighborhood of $r=0$. We will tackle this problem later. Indeed at this step it could happen that there is an accumulation of rings at $r=0$. We will see later that this is not the case.

Let us next prove Proposition \ref{technicalLemma_AnaDisk}. Since its proof is very lengthy and technical, we encapsulate it in the next paragraph.

\paragraph{Proof of Proposition \ref{technicalLemma_AnaDisk}: the function $\psi$ is analytic in $\Omega$.}
The proof necessitates the use of fine properties and estimates of the eigenfunctions. We split this proof in several lemmas.

We will use in an instrumental way the following asymptotic properties of the functions $R_{j,k}$ defined by \eqref{def_Rjk}, already mentioned and used in \cite{PTZobsND}:
\begin{itemize}
\item for every $j\in\N$, the sequence of probability measures $R_{j,k}(r)^2r \,dr$ converges vaguely to $dr$ as $k$ tends to $+\infty$,
\item for every $k\in\N^*$, the sequence of probability measures $R_{j,k}(r)^2r \,dr$ converges vaguely to the Dirac at $r=1$ as $j$ tends to $+\infty$.
\item when taking the limit of $R_{j,k}(r)^2r \, dr$ with a fixed ratio $j/k$, and making this ratio vary, we obtain the family of probability measures
\begin{equation}\label{mesure_Burq}
\mu_s = f_s(r)\, dr=\frac{ 1} { \sqrt{ 1 - s^2}} \frac{ r } { \sqrt{ r^2 - s^2}} \chi_{(s,1)}(r) \, dr,
\end{equation}
parametrized by $s\in[0,1)$. We can even extend to $s=1$ by defining $\mu_1$ as the Dirac at $r=1$.
\end{itemize}
To be more precise with the latter property, let $z_{j,1}'$ be the first positive zero of $J'_{j}$. Then the first positive zero of $R'_{jk}$ is $r^1_{j,k}=\frac{z_{j,1}'}{z_{j,k}}$. The function $r\mapsto R_{j,k}(r)$ is positive and increasing on the interval $(0,r^1_{j,k})$, reaches a (global) maximum at $r^1_{j,k}$, and then oscillates and has $k$ zeros on $(r^1_{j,k},1]$, as can be seen on Figure \ref{fig_Bessel}.
If $j$ and $k$ tend to $+\infty$ with a constant ratio $j/k$ then $r^1_{j,k}$ converges to $s\in[0,1]$, where $s$ is the real number appearing in the formula \eqref{mesure_Burq}. Moreover, in accordance with the two first vague convergence properties recalled above, $s$ tends to $0$ whenever the ratio $j/k$ tends to $0$, and $s$ tends to $1$ whenever the ratio $j/k$ tends to $+\infty$.

\begin{figure}[h]
\begin{center}
\includegraphics[width=4.5cm]{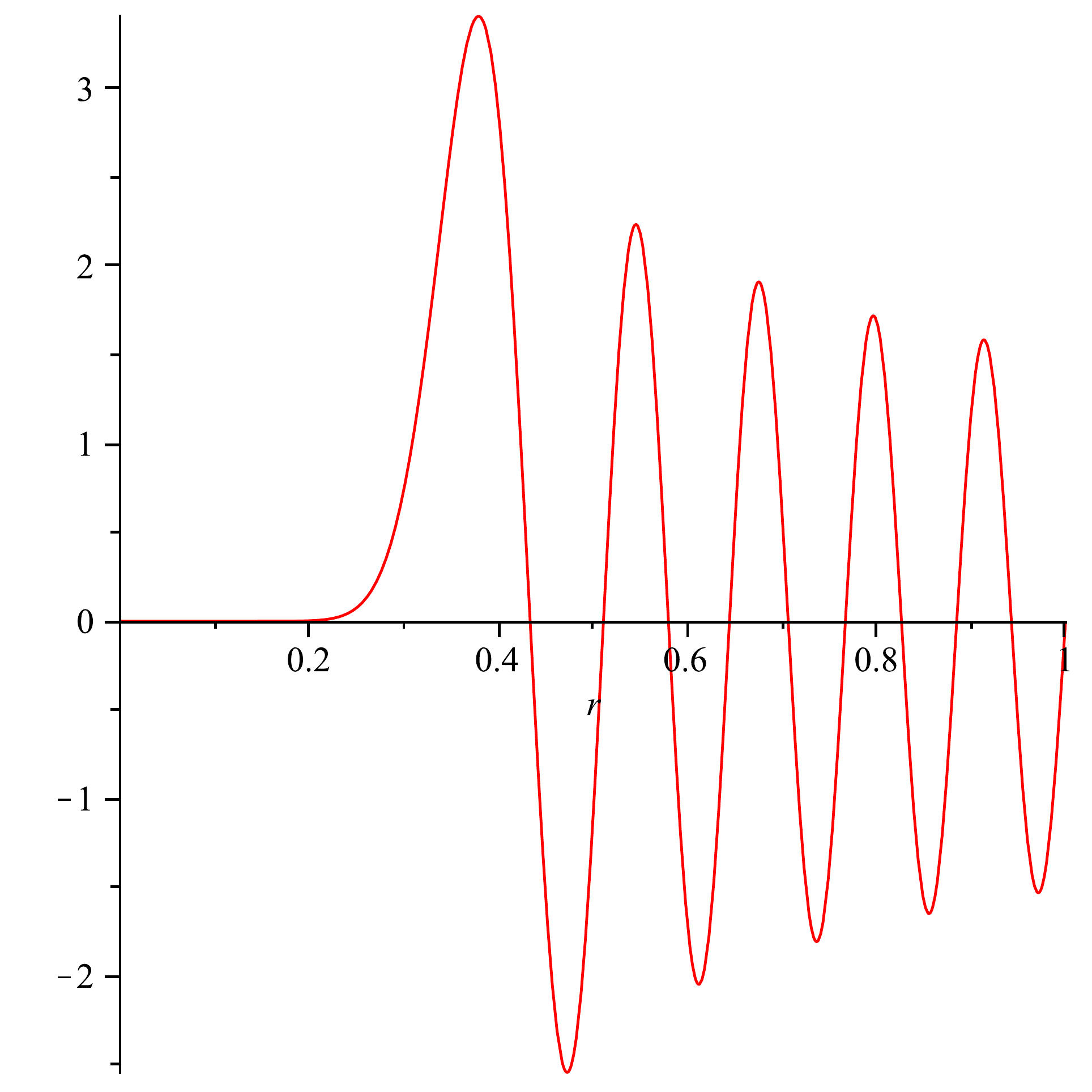}
\end{center}
\caption{The function $r\mapsto R_{j,k}(r)$ on $[0,1]$, with $j=20$, $k=10$.}\label{fig_Bessel}
\end{figure}

Note that these convergence properties provide some semi-classical measures (quantum limits) in the disk. The second one in particular accounts for the phenomenon of whispering galleries. 
The quantum limits \eqref{mesure_Burq} do not seem to be well known. They will be of particular importance in the sequel.

It can be noted that the above limits are in the sense of the vague topology only. Since this topology is weaker than the weak star topology of $L^\infty$, applying these convergence properties will raise some additional difficulties that are not obvious to overcome.
In particular, in the sequel we will need to use the Portmanteau theorem, in combination with a uniform bound in some appropriate Lebesgue space. This is the reason why the following results will be useful.

\begin{lemma}\label{lem_limitsBurq}
For every $h>0$, the family of functions $(f_s)_{0\leq s\leq 1-h}$ (defined by \eqref{mesure_Burq}) is uniformly bounded in $L^{3/2}(0,1)$.
\end{lemma}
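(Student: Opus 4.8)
The plan is to bound the $L^{3/2}$ norm of $f_s$ by a direct computation, isolating the two possible sources of blow-up: the prefactor $(1-s^2)^{-1/2}$ as $s\to 1$, and the (integrable) singularity of $r\mapsto(r^2-s^2)^{-1/2}$ at $r=s$. First I would write out, directly from the definition \eqref{mesure_Burq},
$$
\Vert f_s\Vert_{L^{3/2}(0,1)}^{3/2} = \frac{1}{(1-s^2)^{3/4}}\int_s^1 \frac{r^{3/2}}{(r^2-s^2)^{3/4}}\, dr .
$$
The task then reduces to bounding each of the two factors uniformly in $s\in[0,1-h]$.

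For the prefactor, the restriction $s\leq 1-h$ gives $1-s^2=(1-s)(1+s)\geq 1-s\geq h$, whence $(1-s^2)^{-3/4}\leq h^{-3/4}$, a bound uniform in $s$. For the integral, I would exploit the factorization $r^2-s^2=(r-s)(r+s)$ together with $r+s\geq r$ on $(s,1)$, which yields $(r^2-s^2)^{3/4}\geq r^{3/4}(r-s)^{3/4}$ and therefore, using also $r\leq 1$,
$$
\frac{r^{3/2}}{(r^2-s^2)^{3/4}} \leq \frac{r^{3/4}}{(r-s)^{3/4}} \leq \frac{1}{(r-s)^{3/4}} .
$$
Since the exponent $3/4$ is strictly less than $1$, the right-hand side is integrable near $r=s$, and the change of variable $u=r-s$ gives $\int_s^1 (r-s)^{-3/4}\,dr = 4(1-s)^{1/4}\leq 4$, uniformly in $s\in[0,1)$.

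Combining the two estimates yields $\Vert f_s\Vert_{L^{3/2}(0,1)}^{3/2}\leq 4\,h^{-3/4}$, that is, $\Vert f_s\Vert_{L^{3/2}(0,1)}\leq 4^{2/3}h^{-1/2}$, which is the claimed uniform bound. There is no genuine obstacle in this argument; the only point requiring attention is that the exponent $3/2$ is chosen precisely so that $(3/2)\cdot(1/2)=3/4<1$, making the singularity at $r=s$ integrable. Any exponent strictly below $2$ would serve, and $3/2$ is the convenient value fitting the $L^{3/2}$ framework needed later for the Portmanteau argument; meanwhile the cutoff $s\leq 1-h$ is exactly what keeps the otherwise-divergent prefactor bounded as $s\to 1$.
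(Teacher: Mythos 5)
Your proof is correct, and it is in fact more complete than the paper's own argument. Both proofs start from the same identity
$$
\Vert f_s\Vert_{L^{3/2}(0,1)}^{3/2} = \frac{1}{(1-s^2)^{3/4}}\int_s^1 \frac{r^{3/2}}{(r^2-s^2)^{3/4}}\, dr ,
$$
but they diverge in how uniform boundedness on $[0,1-h]$ is extracted. The paper asserts (with the computations "not reported") that $s\mapsto \Vert f_s\Vert_{L^{3/2}}^{3/2}$ is continuous and increasing from $[0,1)$ onto $[1,+\infty)$, and concludes that its supremum over the compact interval $[0,1-h]$ is attained at $s=1-h$, hence finite. You instead avoid any monotonicity claim (which is plausible but not entirely trivial to verify) and bound the two factors separately by elementary inequalities: $1-s^2\geq 1-s\geq h$ for the prefactor, and $r^2-s^2\geq r(r-s)$ together with $r\leq 1$ to reduce the integrand to the integrable singularity $(r-s)^{-3/4}$. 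This yields the explicit quantitative bound $\Vert f_s\Vert_{L^{3/2}}\leq 4^{2/3}h^{-1/2}$, which the paper's qualitative argument does not provide; the price is that your bound is not sharp in its dependence on $h$ (the true growth of $\Vert f_s\Vert_{L^{3/2}}^{3/2}$ as $s\to 1$ is of order $(1-s)^{-1/2}$, slower than your $h^{-3/4}$), but sharpness is irrelevant for the lemma and for its later use in the Portmanteau/H\"older argument of Lemmas \ref{lem_RjkBurq} and \ref{lem_activeindices}. Your closing remark about the choice of exponent is also accurate: any $L^p$ with $p<2$ would make the singularity at $r=s$ integrable, and $3/2$ is simply the convenient dual pairing used later.
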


\begin{proof}
Easy computations, not reported here, show that the function
$$
s \mapsto \int_0^1 f_s(r)^{3/2} dr = \frac{1}{(1-s^2)^{3/4}} \int_s^1 \frac{r^{3/2}}{(r^2-s^2)^{3/4}} dr
$$
is a continuous increasing function from $[0,1)$ to $[1,+\infty)$. The lemma follows.
\end{proof}

Unfortunately, this lemma alone is not sufficient in order to ensure that the functions $r\mapsto R_{j,k}(r)^2 r$ that are converging vaguely to the $f_s$ are also uniformly bounded in $L^{3/2}(0,1)$ (indeed the convergence is vague only). This uniform bound holds true however, but the proof of this fact requires a particular technical treatment.

\begin{lemma}\label{lem_RjkBurq}
For every $\alpha>0$, the sequence of functions $r\mapsto R_{j,k}(r)^2 r$ with $j\in\N^*$ and $k\in\N^*$ such that $j\leq \alpha k$, is uniformly bounded in $L^{3/2}(0,1)$.
\end{lemma}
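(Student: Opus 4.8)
The plan is to reduce the statement to the uniform bound $\sup_{j\le\alpha k}\int_0^1 R_{j,k}(r)^3 r^{3/2}\,dr<+\infty$ and to obtain it from uniform (in both indices) asymptotics of Bessel functions, the singular behaviour being governed exactly by the limiting densities $f_s$ of \eqref{mesure_Burq}. The starting observation is that the turning point of $r\mapsto J_j(z_{j,k}r)$, located at $r=s_{j,k}:=j/z_{j,k}$, stays uniformly away from the boundary: using $z_{j,k}\ge j+k$ (see \cite[Lemma 5]{Kelliher}) and $j\le\alpha k$, one gets
\begin{equation*}
s_{j,k}=\frac{j}{z_{j,k}}\le\frac{j}{j+k}\le\frac{\alpha}{\alpha+1}=:s_0<1 .
\end{equation*}
Hence, by Lemma \ref{lem_limitsBurq} (applied with $h=1-s_0$), the densities $f_{s_{j,k}}$ are uniformly bounded in $L^{3/2}(0,1)$. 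Since there are only finitely many pairs with $j+k$ below any fixed threshold, and each $R_{j,k}^2 r$ belongs to $L^{3/2}(0,1)$, it suffices to prove the bound for $j+k$ large.

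Next I would split $(0,1)$ according to the three regimes of $J_j$ relative to its turning point $s_{j,k}$: the evanescent region $r\in(0,s_{j,k}-c\,j^{-2/3})$, the turning-point (Airy) layer $|r-s_{j,k}|\le c\,j^{-2/3}$, and the oscillatory region $r\in(s_{j,k}+c\,j^{-2/3},1)$. On each of them I would insert Olver's uniform asymptotic expansion of $J_j(jz)$ in terms of the Airy function (see \cite{Olver}), with $z=z_{j,k}r/j=r/s_{j,k}$, together with the companion estimate $J_j'(z_{j,k})^2\sim\frac{2}{\pi}\sqrt{1-s_{j,k}^2}/z_{j,k}$ used to normalise $R_{j,k}$ via \eqref{def_Rjk}. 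In the oscillatory region, the Debye form of the expansion yields $R_{j,k}(r)^2 r=2\cos^2(\Phi_{j,k}(r))\,f_{s_{j,k}}(r)\,(1+o(1))$, whence $R_{j,k}(r)^2 r\le C\,f_{s_{j,k}}(r)$ with $C$ independent of $(j,k)$; integrating the $3/2$ power and invoking Lemma \ref{lem_limitsBurq} bounds this contribution by $C^{3/2}\sup_{s\le s_0}\|f_s\|_{L^{3/2}}^{3/2}$. In the evanescent region, the Kapteyn inequality \eqref{Kapteyn} (together with the above lower bound on $J_j'(z_{j,k})^2$) shows that $R_{j,k}^2$ is exponentially small, so this contribution tends to $0$; and in the Airy layer, the peak estimate $R_{j,k}(r)^2 r\lesssim j^{1/3}(1-s_0^2)^{-1/2}$ over an interval of length $\lesssim j^{-2/3}$ gives a contribution of size $\lesssim j^{-1/6}$, hence uniformly bounded.

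Finally, the finitely many fixed values of $j$ in the regime $j$ bounded, $k\to+\infty$ (where $s_{j,k}\to 0$ and $f_{s_{j,k}}\to 1$) must be treated separately, since Olver's expansion is an expansion in the order $j$; there, for each fixed $j$ the family $(R_{j,k}^2 r)_k$ is readily seen to be uniformly bounded in $L^{3/2}(0,1)$, the evanescent interval $(0,s_{j,k})$ shrinking to a point while $R_{j,k}^2 r$ stays bounded on it. Combining the three regional estimates yields the desired uniform bound. The main obstacle is the uniformity of the Bessel asymptotics across the turning point simultaneously in $j$ and $k$ with $j\le\alpha k$, and the matching of the Airy layer with the oscillatory region: the exponent $3/2$ is exactly critical, being at once the largest power for which $f_s\in L^{3/2}$ (its singularity $(r-s)^{-1/2}$ becoming the still integrable $(r-s)^{-3/4}$) and small enough for the $j^{1/3}$ peak over a layer of width $j^{-2/3}$ to contribute only $O(j^{-1/6})$; this delicate balance is what makes the statement true precisely in $L^{3/2}$.
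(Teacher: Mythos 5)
Your proposal takes a genuinely different route from the paper's, and most of it is sound: the reduction $s_{j,k}=j/z_{j,k}\leq\alpha/(\alpha+1)=s_0<1$, the oscillatory-region identification $R_{j,k}(r)^2r\approx 2\cos^2(\Phi_{j,k}(r))\,f_{s_{j,k}}(r)$ (which, combined with Lemma \ref{lem_limitsBurq}, is exactly the mechanism making the exponent $3/2$ work), the Airy-layer count ($j^{1/3}$ peak over a layer of width $j^{-2/3}$, contribution $O(j^{-1/6})$), and the separate treatment of bounded $j$ are all correct, modulo the standard uniformity of Olver's expansion in the order. However, there is a genuine gap in your evanescent region. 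The Kapteyn inequality \eqref{Kapteyn} carries no algebraic prefactor, and that prefactor is precisely what matters at the critical exponent $3/2$. Just to the left of the Airy layer, say $s_{j,k}-r\asymp s_{j,k}j^{-2/3}$, one has $1-r/s_{j,k}\asymp j^{-2/3}$, hence $jg(r/s_{j,k})\asymp -1$, and Kapteyn only gives $J_j(z_{j,k}r)^2\lesssim 1$; dividing by $J_j'(z_{j,k})^2\asymp\sqrt{1-s_{j,k}^2}\,/z_{j,k}$ yields the bound $R_{j,k}(r)^2r\lesssim z_{j,k}s_{j,k}=j$ there, whereas the true size is $\asymp j^{1/3}$ (Kapteyn overestimates $|J_j|$ near the turning point by a factor $\asymp j^{1/3}$). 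Raising your bound to the power $3/2$ and integrating over that strip of width $\asymp s_{j,k}j^{-2/3}$ gives $O(s_{j,k}\,j^{5/6})$, which diverges, for instance when $j\asymp k$ so that $s_{j,k}\asymp 1$. So the step ``Kapteyn shows $R_{j,k}^2$ is exponentially small, so this contribution tends to $0$'' fails in the $O(j^{-2/3})$-neighbourhood of the turning point: exponential smallness only sets in at distances $\gg j^{-2/3}$ in the variable $r/s_{j,k}$.

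The gap is fixable with tools you already invoke: either use Olver's uniform Airy-type asymptotics in the evanescent region as well (its prefactor supplies the missing $j^{-1/3}$), or combine Kapteyn with the uniform bound $\Vert J_j\Vert_{L^\infty}\leq Cj^{-1/3}$, taking the minimum of the two; a short computation then bounds the evanescent contribution by $O\big((\log j)^{2/3}j^{-1/6}\big)+O\big(j^{3/2-2\sqrt{2}}\big)\to 0$. It is worth contrasting this with the paper's proof, which sidesteps all regime-splitting and matching: it combines the normalization asymptotics for $J_j'(z_{j,k})$ with Krasikov's explicit global inequality $|J_j(x)|\leq\frac{2}{\sqrt{\pi}}\,|x^2-j^2+1/4|^{-1/4}$, valid for all $x\geq 0$, to get in one stroke the pointwise bound $R_{j,k}(r)^2r\leq C\,(j/k)^{1/3}(1-r)^{-1/2}$, whose $L^{3/2}(0,1)$ norm is uniformly bounded for $j\leq\alpha k$. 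Your three-regime analysis is more classical and more transparent about where the $L^{3/2}$ threshold comes from, but it is exactly at the turning point that it needs the uniform (Airy) estimate rather than Kapteyn.
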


Note that this uniform bound depends on $\alpha$ and tends to $+\infty$ as $\alpha$ tends to $0$ (that is, when the functions approach the whisepring galleries modes).

\begin{proof}[Proof of Lemma \ref{lem_RjkBurq}.]
From \eqref{def_Rjk}, we have
\begin{equation*}
R_{j,k}(r)^2 r = 2 \frac{ J_j(z_{j,k} r)^2 r }{ (J_j'(z_{j,k}))^2 } .
\end{equation*}

Let us first provide an asymptotic estimate of $(J_j'(z_{j,k}))^2$. First of all, using \cite[9.3.27 p. 367]{AS}, we have
$J_j'(j+xj^{1/3}) \sim -\frac{2^{2/3}}{j^{2/3}}\mathrm{Ai}'(-2^{1/3} x)$
for every $x>0$, where $\mathrm{Ai}$ is the Airy function. Combining with the fact that $z_{j,k} \sim j + \delta_k j^{1/3}$ with $\delta_k=a_k2^{-1/3}>0$ (see \cite{Olver}), where $a_k>0$ is the $k^{\textrm{th}}$ positive zero of the function $x\mapsto \mathrm{Ai}(-x)$, it follows that 
$J_j'(z_{j,k})\sim -\frac{2^{2/3}}{j^{2/3}}\mathrm{Ai}'(-a_k)$. Now, using \cite{Krasikov}, we have, 
on the one hand, that $\mathrm{Ai}(-x)\sim \frac{1}{\sqrt{\pi}x^{1/4}}\cos\left( \frac{2}{3} x^{3/2} - \frac{\pi}{4} \right)$, from which it follows that $\frac{2}{3} a_k^{3/2} + \frac{\pi}{4}=k\pi$ (and thus, $a_k\sim (\frac{3}{2}\pi k)^{2/3}$), and on the other hand, that
$\mathrm{Ai}'(-x)\sim -\frac{x^{1/4}}{\sqrt{\pi}}\sin\left( \frac{2}{3} x^{3/2} - \frac{\pi}{4} \right)$, from which we infer that 
$\mathrm{Ai}'(-a_k)\sim \frac{a_k^{1/4}}{\sqrt{\pi}}=\left(\frac{3}{2}\right)^{1/6}\frac{k^{1/6}}{\pi^{1/3}}$. We conclude that
$\vert J_j'(z_{j,k})\vert \sim  \frac{3^{1/6}\sqrt{2}}{\pi^{1/3}} \frac{k^{1/6}}{j^{2/3}} $.

Now, using the estimate 
$$
\vert J_j(z_{j,k}r)\vert \leq \frac{2}{\sqrt{\pi}} \frac{1}{\vert z_{j,k}^2r^2-j^2+1/4\vert^{1/4}},
$$
coming from \cite{Krasikov}, and valuable for every $r\geq 0$, and using the inequality $z_{j,k}\geq j$, it follows that
$$
R_{j,k}(r)^2 r \leq C \frac{r}{\vert j^2r^2-j^2+1/4\vert^{1/2}} \frac{j^{4/3}}{k^{1/3}} \leq \frac{4}{\pi C} \frac{r}{\sqrt{\vert r^2-1\vert}} \leq \frac{C}{\sqrt{1-r}} \left(\frac{j}{k}\right)^{1/3} ,
$$
for every $r\in[0,1]$, for some constant $C>0$. The lemma follows easily.
\end{proof}

%

Having in mind Remark \ref{remark_minimax}, let us prove a result on the active Lagrange multipliers. We define $\mathcal{I}(b^*)$ as the subset of all indices $(j,k)\in\N\times\N^*$ 
for which the infimum is reached in the functional $J_r$, that is,
\begin{equation*}
\begin{split}
\forall(j,k)\in\mathcal{I}(b^*)\qquad  \gamma_{j,k}(T,\alpha)\int_0^1 b^*(r) R_{j,k}(r)^2  r\, dr  &= J_r(b^*), \\
\forall(j,k)\in\N\times\N^*\setminus\mathcal{I}(b^*)\qquad  \gamma_{j,k}(T,\alpha)\int_0^1 b^*(r) R_{j,k}(r)^2  r \, dr & > J_r(b^*).
\end{split}
\end{equation*}
According to Remark \ref{remark_minimax}, we have $\beta_{j,k}^*=0$ for all $(j,k)\in\N\times\N^*\setminus\mathcal{I}(b^*)$.

Actually, we are going to prove that all indices $(j,k)$ such that $j+k$ tends to $+\infty$ with a ratio $j/k$ bounded from above, are not active (in other words, $\beta_{j,k}^*=0$ for such indices).
As explained above, these indices are those for which we avoid the whispering galleries at the limit, and are such that $r^1_{j,k}= \frac{z'_{j,1}}{z_{j,k}} < 1-h$ for some $h>0$ as $j+k$ tends to $+\infty$.

More precisely, let us prove the following lemma.

\begin{lemma}\label{lem_activeindices}
Let $h>0$ arbitrary.
Assume that $j+k$ tends to $+\infty$ with $r^1_{j,k} = \frac{z'_{j,1}}{z_{j,k}} < 1-\frac{h}{2}$. Then
\begin{equation}\label{limlem13}
\gamma_{j,k}(T,\alpha)\int_0^1 b^*(r) R_{j,k}(r)^2  r \, dr \rightarrow +\infty.
\end{equation}
\end{lemma}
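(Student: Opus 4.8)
The plan is to prove that the integral $\int_0^1 b^*(r) R_{j,k}(r)^2 r\, dr$ stays bounded away from $0$ along the prescribed family of indices, and then to exploit the blow-up of the weights $\gamma_{j,k}(T,\alpha)$. The latter is immediate: by \cite[Lemma 5]{Kelliher} one has $\lambda_{j,k}=z_{j,k}^{2\alpha}\geq(j+k)^{2\alpha}\to+\infty$ as $j+k\to+\infty$, so $\gamma_{j,k}(T,\alpha)=\frac{e^{2\lambda_{j,k}T}-1}{2\lambda_{j,k}}\to+\infty$. Hence it suffices to prove that the liminf of $\int_0^1 b^*(r) R_{j,k}(r)^2 r\, dr$ over the indices of the family is strictly positive; since $\gamma_{j,k}(T,\alpha)\to+\infty$, the product then automatically tends to $+\infty$.

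First I would reduce to a bounded ratio $j/k$. Recall that when $j/k$ is kept constant, $r^1_{j,k}=z'_{j,1}/z_{j,k}$ converges to the value $s$ parametrizing the quantum limit $\mu_s$ of \eqref{mesure_Burq}, and that $s\to 1$ as $j/k\to+\infty$. Consequently, along any subsequence with $j/k\to+\infty$ one would have $r^1_{j,k}\to 1$, contradicting $r^1_{j,k}<1-h/2$. Thus there is a constant $\alpha_0=\alpha_0(h)>0$ with $j\leq\alpha_0 k$ for all but finitely many indices of the family, and Lemma \ref{lem_RjkBurq} then provides a bound for $R_{j,k}(r)^2 r$ in $L^{3/2}(0,1)$ that is uniform over the family.

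Next I would pass to the limit in the integral. Given any subsequence, I extract a further one along which $r^1_{j,k}\to s$ for some $s\in[0,1-h/2]$ (and $j/k$ converges). Since $(R_{j,k}(r)^2 r)$ is bounded in the reflexive space $L^{3/2}(0,1)$, a subsequence converges weakly in $L^{3/2}$; testing against $C_c(0,1)\subset L^3(0,1)$ and using the vague convergence $R_{j,k}(r)^2 r\, dr\rightharpoonup \mu_s=f_s\, dr$ identifies the weak limit as $f_s$. As $b^*\in L^\infty(0,1)\subset L^3(0,1)=(L^{3/2})^*$, this yields
\[
\int_0^1 b^*(r) R_{j,k}(r)^2 r\, dr \;\longrightarrow\; \int_0^1 b^*(r) f_s(r)\, dr .
\]
This upgrade from vague to weak-$L^{3/2}$ convergence is the crux of the argument, and is exactly what the uniform estimate of Lemma \ref{lem_RjkBurq} is designed for: vague convergence alone would allow mass to leak towards the boundary $r=1$ (the whispering-gallery direction), whereas the $L^{3/2}$ bound forbids it. I expect this step to be the main obstacle.

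Finally I would check positivity of the limit. From the elementary inequality $(1-s^2)(r^2-s^2)\leq r^2$ one gets $f_s(r)=\dfrac{r}{\sqrt{(1-s^2)(r^2-s^2)}}\geq 1$ on $(s,1)$, and since $s\leq 1-h/2$ the interval $(1-h/2,1)$ lies in the support of $f_s$. Therefore
\[
\int_0^1 b^*(r) f_s(r)\, dr \;\geq\; \int_{1-h/2}^1 b^*(r)\, dr \;>\;0 ,
\]
the strict inequality coming from Lemma \ref{lem_adherence1} applied with $h/2$, which guarantees that $b^*$ does not vanish almost everywhere on $[1-h/2,1]$. This lower bound is independent of the index and of $s\in[0,1-h/2]$; as every subsequence of the family admits a further subsequence along which the integral converges to such a quantity $\geq\int_{1-h/2}^1 b^*(r)\, dr$, we conclude that $\liminf \int_0^1 b^*(r) R_{j,k}(r)^2 r\, dr\geq\int_{1-h/2}^1 b^*(r)\, dr>0$, and multiplying by $\gamma_{j,k}(T,\alpha)\to+\infty$ establishes \eqref{limlem13}.
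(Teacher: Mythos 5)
Your proof is correct, and it rests on the same pillars as the paper's own argument: the uniform $L^{3/2}$ bounds of Lemmas \ref{lem_limitsBurq} and \ref{lem_RjkBurq}, the nontriviality of $b^*$ near $r=1$ from Lemma \ref{lem_adherence1}, the vague limits \eqref{mesure_Burq}, and the pointwise bound $f_s\geq 1$ on $(s,1)$. Where you genuinely differ is the mechanism for upgrading vague convergence (tested only on continuous functions) to convergence of the integral against the merely-$L^\infty$ function $b^*$. The paper argues quantitatively: it fixes a smooth approximation $b$ with $\Vert b^*-b\Vert_{L^3}\leq\frac{1}{4C}\int_{1-h/2}^1 b^*(r)\,dr$, writes the three-term decomposition \eqref{azer4}, controls the error term by H\"older against the uniform $L^{3/2}$ bounds, and lets vague convergence kill the smooth middle term; this needs no subsequence extraction and produces the index-uniform lower bound $\frac{1}{4}\int_{1-h/2}^1 b^*(r)\,dr$. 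You argue qualitatively: boundedness in the reflexive space $L^{3/2}(0,1)$ gives weak subsequential limits, which vague convergence plus density of $C_c(0,1)$ in $L^3(0,1)=(L^{3/2})^*$ identifies as $f_s$, and the subsequence principle for the liminf concludes (with the slightly sharper constant $\int_{1-h/2}^1 b^*(r)\,dr$). Both routes are sound; yours is cleaner and more abstract, the paper's is explicit and constant-tracking. Two further remarks: your explicit reduction to a bounded ratio $j\leq\alpha_0(h)k$ fills in a step the paper leaves implicit when it applies Lemma \ref{lem_RjkBurq} (stated for $j\leq\alpha k$) to all indices with $r^1_{j,k}<1-\frac{h}{2}$; conversely, like the paper, you invoke the limits \eqref{mesure_Burq} along sequences whose ratio merely converges rather than being fixed, so both arguments lean equally on the Bessel asymptotics asserted before Lemma \ref{lem_limitsBurq}.
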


\begin{proof}[Proof of Lemma \ref{lem_activeindices}.]
Let $h>0$ arbitrary.
To prove \eqref{limlem13}, we are going to use the weak limits \eqref{mesure_Burq}. Since the limits are valuable in vague topology only, they cannot be applied directly to the function $b^*$, which is of class $L^\infty$ only. This is because of this defect in the convergence that we have to deal with indices such that $r^1_{j,k} = \frac{z'_{j,1}}{z_{j,k}} < 1-\frac{h}{2}$, and then we are going to deal with a smooth approximation of $b$ (to which we will be able to apply the weak limits \eqref{mesure_Burq}, in vague topology), and use Lemmas \ref{lem_adherence1} and \ref{lem_RjkBurq}.

From Lemmas \ref{lem_limitsBurq} and \ref{lem_RjkBurq}, there exists $C>0$ such that $\Vert f_s\Vert_{L^{3/2}}\leq C$ for every $s\in[0,1-\frac{h}{2}]$, and $\Vert R_{j,k}^2 r\Vert_{L^{3/2}}\leq C$ for all indices $(j,k)$ such that $r^1_{j,k}<1-\frac{h}{2}$.

From Lemma \ref{lem_adherence1}, the function $b^*$ is nontrivial on the subinterval $[1-\frac{h}{2},1]$.
By an easy computation, we have that $1\leq f_{s_1} < f_{s_2}(r)$ for every $r\in[0,1)$, whenever $0\leq s_1<s_2<1$. Then we get, for every $s\in[0,1-\frac{h}{2}]$,
\begin{equation}\label{18h26}
\int_{0}^1 b^*(r) f_s(r) \, dr \geq \int_{1-\frac{h}{2}}^1 b^*(r) \frac{ 1} { \sqrt{ 1 - s^2}} \frac{ r } { \sqrt{ r^2 - s^2}}  \,  dr  \geq \int_{1-\frac{h}{2}}^1 b^*(r)   \,  dr >0.
\end{equation}

Let $b$ be a nonnegative smooth function defined on $[0,1]$ such that
\begin{equation}\label{b*b}
\Vert b^* - b\Vert_{L^3} \leq \frac{1}{4C}\int_{1-\frac{h}{2}}^1 b^*(r)   \,  dr.
\end{equation}
Such a function $b$ can be obtained by convolution.

Now, we write
\begin{multline}\label{azer4}
\int_0^1 b^*(r) R_{j,k}(r)^2 r \, dr
= 
\int_0^1 b^*(r) f_s(r) \, dr + 
\int_0^1 b(r) \left( R_{j,k}(r)^2 r - f_s(r) \right) \, dr \\
+ \int_0^1 (b^*(r)-b(r)) \left( R_{j,k}(r)^2 r - f_s(r) \right) \, dr .
\end{multline}
Using the H\"older inequality, we have, using \eqref{b*b},
\begin{equation}\label{18h27}
\begin{split}
\left\vert  \int_0^1 (b^*(r)-b(r)) \left( R_{j,k}(r)^2 r - f_s(r) \right) \, dr  \right\vert 
&\leq \Vert b^*-b\Vert_{L^3} \left( \Vert R_{j,k}^2 r\Vert_{L^{3/2}} + \Vert f_s\Vert_{L^{3/2}} \right)  \\
&\leq 2C \Vert b^*-b\Vert_{L^3} \\
&\leq \frac{1}{2}\int_{1-\frac{h}{2}}^1 b^*(r)   \,  dr.
\end{split}
\end{equation}
Then, from \eqref{18h26}, \eqref{azer4} and \eqref{18h27}, we get that
$$
\int_0^1 b^*(r) R_{j,k}(r)^2 r \, dr
\geq \frac{1}{2}\int_{1-\frac{h}{2}}^1 b^*(r) \, dr + 
\int_0^1 b(r) \left( R_{j,k}(r)^2 r - f_s(r) \right) \, dr ,
$$
and then if we take indices $(j,k)$ such that $r^1_{j,k} = \frac{z'_{j,1}}{z_{j,k}} < 1-\frac{h}{2}$, with $j+k$ large enough, then according to the limit \eqref{mesure_Burq} which is valuable in vague topology, it follows (note that $b$ is smooth) that the integral $\int_0^1 b(r) \left( R_{j,k}(r)^2 r - f_s(r) \right) \, dr$ is small, and we get, if $j+k$ is small enough, that
$$
\gamma_{j,k}(T,\alpha) \int_0^1 b^*(r) R_{j,k}(r)^2 r \, dr
\geq \frac{1}{4}\gamma_{j,k}(T,\alpha)\int_{1-\frac{h}{2}}^1 b^*(r) \, dr.
$$
Since the right-hand side of the inequality tends to $+\infty$, the result follows.
\end{proof}

It follows from this lemma that we only need to consider indices such that $r^1_{j,k}$ converges to $1$ as $j+k$ tends to $+\infty$. This case involves indices $(j,k)$ such that $k$ is fixed and $j$ tends to $+\infty$, and indices such that $j$ and $k$ tend to $+\infty$ with a ratio $j/k$ tending as well to $+\infty$. In other words, this case concerns all indices for which we obtain the whispering galleries at the limit.

Therefore, at this step we have obtained that the function $\psi$ defined by \eqref{def_psi} can be written as
\begin{equation}\label{psi_restreinte}
\psi(r)=\sum_{\substack{(j,k)\in \N\times\N^* \\ r^1_{j,k}\geq 1-\frac{h}{2}}}\gamma_{j,k}(T,\alpha)\beta^*_{j,k} R_{j,k}(r)^2 ,
\end{equation}
The function $\psi$ is written as a series of analytic functions. In order to prove the analyticity of $\psi$, we are going to prove that all functions appearing in the sum, and all their derivatives, are bounded from above by some appropriate exponential functions, decreasing with $j$. To reach this objective, the estimate \eqref{estim1_Rjk} derived in Lemma \ref{lem1_Rjk} is not enough, since it was established for $k$ fixed. 
Let us then first extend the result of Lemma \ref{lem1_Rjk}, in order to prove that the estimate \eqref{estim1_Rjk} actually holds true for the set of indices appearing in the sum in the formula \eqref{psi_restreinte}.

\begin{lemma}\label{lem9}
For every $h\in (0,1)$, there exists a constant $C>0$ such that
\begin{equation}\label{estimRjk}
R_{j,k}(r)^2\leq C j^{4/3} \exp ( -Cjh^{3/2} ),
\end{equation}
for every $r\in [0,1-h]$, and for all indices $(j,k)$ such that $r^1_{j,k}\geq 1-\frac{h}{2}$.
\end{lemma}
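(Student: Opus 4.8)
The plan is to reduce the statement to the proof of Lemma~\ref{lem1_Rjk}, whose argument must now be made uniform over the whole family of indices $(j,k)$ satisfying $r^1_{j,k}\geq 1-\frac h2$, rather than over a single fixed $k$. The decisive observation is that the hypothesis $r^1_{j,k}=\frac{z'_{j,1}}{z_{j,k}}\geq 1-\frac h2$ is equivalent to the upper bound
\begin{equation*}
z_{j,k}\leq \frac{z'_{j,1}}{1-\frac h2},
\end{equation*}
which, together with $z'_{j,1}=j+\gamma_1'j^{1/3}+o(j^{1/3})$, forces $z_{j,k}=O(j)$ uniformly in $k$, and hence, via the Kelliher inequality $j+k\leq z_{j,k}$ (see \cite[Lemma 5]{Kelliher}), also $k=O(j)$. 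This is precisely the uniformity that was unavailable in Lemma~\ref{lem1_Rjk}, where $k$ was kept fixed.

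First I would bound the numerator $J_j(z_{j,k}r)^2$. For $r\in[0,1-h]$, set $y=\frac{z_{j,k}r}{j}$; using the upper bound on $z_{j,k}$ and $\frac{z'_{j,1}}{j}\to 1$, one checks that $y\leq \frac{z'_{j,1}}{j}\cdot\frac{1-h}{1-\frac h2}\leq 1-\frac h4$ for $j$ large enough, uniformly over all admissible $k$. Since $y\in[0,1)$, the Kapteyn inequality \eqref{Kapteyn} applies and gives $\vert J_j(z_{j,k}r)\vert\leq \exp(jg(y))\leq \exp\!\big(jg(1-\tfrac h4)\big)$, because $g$ is increasing. The expansion $g(1-\varepsilon)\sim -c\,\varepsilon^{3/2}$ near $\varepsilon=0$ (already used to derive \eqref{ALR2}) then yields $J_j(z_{j,k}r)^2\leq \exp(-Cjh^{3/2})$.

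Next I would bound the denominator $\int_0^1 J_j(z_{j,k}r)^2 r\,dr$ from below. The point here is that the derivation of \eqref{ALR1} never used that $k$ is fixed: it relies only on the monotonicity of $J_j$ on $[0,z'_{j,1}]$, on $J_j(j)\geq C_1 j^{-1/3}$, on the $k$-independent asymptotics of $z'_{j,1}$, and on $z_{j,k}\leq \pi(j+k)$. Hence \eqref{ALR1} remains valid verbatim, giving $\int_0^1 J_j(z_{j,k}r)^2 r\,dr\geq C\frac{j^{2/3}}{(j+k)^2}$. Combining with $j+k\leq z_{j,k}\leq \frac{z'_{j,1}}{1-h/2}=O(j)$, we obtain $(j+k)^2=O(j^2)$ and therefore a lower bound of the form $C(h)\,j^{-4/3}$ for the denominator. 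Dividing the numerator estimate by this lower bound gives $R_{j,k}(r)^2\leq C\,j^{4/3}\exp(-Cjh^{3/2})$, which is \eqref{estimRjk}. The finitely many exceptional small indices $j$ (for each of which only finitely many $k$ are admissible, because of the upper bound on $z_{j,k}$) are handled by continuity of $R_{j,k}$ on the compact $[0,1-h]$ and absorbed into the constant $C$.

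The main obstacle is exactly this passage from ``$k$ fixed'' to ``$k$ growing with $j$'': one must verify that both the Kapteyn (numerator) estimate and the lower bound \eqref{ALR1} (denominator) are uniform over the entire admissible family of indices. The hypothesis $r^1_{j,k}\geq 1-\frac h2$ rescues both simultaneously: it keeps the rescaled argument $y$ bounded away from $1$ by a margin proportional to $h$, and it forces $z_{j,k}$, and hence $k$, to be of order $j$, so that the factor $(j+k)^2$ in \eqref{ALR1} does not spoil the claimed polynomial prefactor $j^{4/3}$.
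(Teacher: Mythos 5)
Your proof is correct and follows essentially the same route as the paper's: both use the hypothesis $r^1_{j,k}\geq 1-\frac{h}{2}$ to bound $z_{j,k}/j$ (and hence $k/j$) uniformly in place of the fixed-$k$ asymptotics of $z_{j,k}$, then apply the Kapteyn inequality with the rescaled argument $y\leq 1-\frac{h}{4}$ for the numerator, and a $k$-uniform version of the lower bound \eqref{ALR1} on $\int_0^1 J_j(z_{j,k}r)^2\, r\, dr$ (giving a denominator bound of order $j^{-4/3}$) to conclude. The only cosmetic difference is that the paper re-derives this denominator estimate directly as \eqref{ALR1_bis} from \eqref{tech19:22}, whereas you reuse \eqref{ALR1} verbatim and insert $j+k=O(j)$, which is the same computation.
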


Note that the estimate Lemma \ref{lem1_Rjk} was weaker (it was established for $k$ fixed), but has been crucial in order to prove that the function $b^*$ was nontrivial on any interval $[1-h,1]$. Note also that, in the statement above, it is important to assume that $r\leq 1-h$ and $r^1_{j,k}\geq 1-\frac{h}{2}$ (the gap of $\frac{h}{2}$ is crucial in the proof below).

\begin{proof}[Proof of Lemma \ref{lem9}]
The beginning of the proof is the same as the proof of Lemma \ref{lem1_Rjk}. In particular, we still have the inequality \eqref{tech19:22}.
Moreover, using the inequalities $z_{j,k}\geq j+k$ (see \cite[Lemma 5]{Kelliher}) and $z'_{j,1} = j+\gamma'_1j^{1/3}+\mathrm{o}(j^{1/3})\leq 2j$ for $j$ large enough, indices such that $r^1_{j,k}=\frac{z'_{j,1}}{z_{j,k}}\geq 1-\frac{h}{2}$ are such that $\frac{k}{j}\leq \frac{2}{1-\frac{h}{2}}-1$.
Then, the inequality \eqref{tech19:22} implies that
\begin{equation}\label{ALR1_bis}
\int_0^1 J_j(z_{j,k}r)^2 r dr
\geq \frac{C_1\gamma_1'}{\pi^2}\Big(1-\frac{h}{2}\Big) \frac{1}{j^{4/3}} .
\end{equation}

However, we cannot use the expansion $z_{j,k} = j + \delta_k j^{1/3} + o(j^{1/3})$, which is valuable for $k$ fixed only.
Then, from that step the proof differs from the one of Lemma \ref{lem1_Rjk}.

Using the assumption that $r^1_{j,k} = \frac{z'_{j,1}}{z_{j,k}}\geq 1-\frac{h}{2}$, we get $z_{j,k}\leq z'_{j,1}/(1-\frac{h}{2})$. Since $z'_{j,1} = j+\gamma'_1j^{1/3}+\mathrm{o}(j^{1/3}) \leq j ( 1+\frac{h}{4} )$ for $j$ large enough, it follows that $\frac{z_{j,k}}{j}\leq (1+\frac{h}{4})/(1-\frac{h}{2})$. 

Then, for every $r\in [0,1-h]$, we write $z_{j,k}r = j y$ with $y=\frac{z_{j,k}}{j}r$, and we get
$$
y = \frac{z_{j,k}}{j}r \leq  \frac{(1-h)(1+\frac{h}{4})}{1-\frac{h}{2}} \leq 1-\frac{h}{4},
$$
whenever $j$ is large enough. Therefore, as in the end of the proof of Lemma \ref{lem1_Rjk}, we get that, if $j$ is large enough, then
$$
\vert J_j(z_{j,k} r)\vert \leq \exp \left( j g \Big( 1-\frac{h}{4} \Big) \right),
$$
for every $r\in[0,1-h]$.
Using an asymptotic expansion of $g$, we get that
\begin{equation}\label{ALR2_bis}
\vert J_j(z_{j,k} r)\vert \leq \exp \left( -\frac{\sqrt{2}}{12} j h^{3/2} + o(j h^{3/2}) \right),
\end{equation}
for every $r\in[0,1-h]$.
Since $R_{j,k}(r)^2 = J_j(z_{j,k} r)^2 / \int_0^1 J_j(z_{j,k}r)^2 r \, dr$, the estimate \eqref{estimRjk} of the lemma finally follows by combining \eqref{ALR1_bis} with \eqref{ALR2_bis}.
\end{proof}

It is now required to estimate also all derivatives of the functions $R_{j,k}(r )^2$.
Let us do that, first, with the four first derivatives (before iterating) of $R_{j,k}(r )$.

\begin{lemma}\label{lem9_4derivatives}
For every $h\in (0,1)$, there exists a constant $C>0$ such that
\begin{equation}\label{estimRjk_n}
\left\vert\frac{d^n}{dr^n}(R_{j,k}(r))\right\vert\leq C j^{n+2} e^{-Cjh^{3/2}}.
\end{equation}
for every $r\in [h,1-h]$, for every $n\in\{0,1,2,3,4\}$, and for all indices $(j,k)$ such that $r^1_{j,k}\geq 1-\frac{h}{2}$.
\end{lemma}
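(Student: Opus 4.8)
The plan is to reduce the estimate to the scalar bound of Lemma \ref{lem9} together with the Bessel equation. Since by \eqref{def_Rjk} one has $R_{j,k}(r) = c_{j,k}\, J_j(z_{j,k}r)$ with the $r$-independent constant $c_{j,k} = \sqrt{2}/|J_j'(z_{j,k})| = \left(\int_0^1 J_j(z_{j,k}r)^2 r\,dr\right)^{-1/2}$, each derivative factors as $\frac{d^n}{dr^n}R_{j,k}(r) = c_{j,k}\, u^{(n)}(r)$, where $u(r) = J_j(z_{j,k}r)$. The lower bound \eqref{ALR1_bis} already established in the proof of Lemma \ref{lem9} gives $c_{j,k} \leq C j^{2/3}$ for all indices with $r^1_{j,k} \geq 1-\frac{h}{2}$, so it suffices to prove $|u^{(n)}(r)| \leq C j^n e^{-Cjh^{3/2}}$ on $[h,1-h]$ for $n\in\{0,1,2,3,4\}$; the stated (generous) bound $j^{n+2}$ then follows a fortiori.

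I would obtain these bounds by a bootstrap on the rescaled Bessel equation $r^2 u'' + r u' + (z_{j,k}^2 r^2 - j^2) u = 0$. On $[h,1-h]$ the coefficients $1/r$ and $1/r^2$ are bounded, while $z_{j,k}^2 r^2 - j^2 = O(j^2)$ since $z_{j,k} = O(j)$ for the relevant indices (exactly as in the proof of Lemma \ref{lem9}); hence $|u''| \leq C(h)(|u'| + j^2|u|)$, and differentiating the equation once and twice expresses $u'''$ and $u^{(4)}$ as linear combinations of the lower-order derivatives with coefficients bounded on $[h,1-h]$ by $C(h)j^2$. Thus everything reduces to the two base estimates $|u| \leq Ce^{-Cjh^{3/2}}$ and $|u'| \leq Cj\, e^{-Cjh^{3/2}}$: the first is exactly the content of Lemma \ref{lem9}, and substituting both into the recursion yields $|u^{(n)}| \leq Cj^n e^{-Cjh^{3/2}}$ for $n=2,3,4$, which finishes the proof.

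The only genuinely new step, and the main obstacle, is the estimate on $u'(r) = z_{j,k} J_j'(z_{j,k}r)$. For this I would use the recurrence $2J_j' = J_{j-1} - J_{j+1}$ and apply the Kapteyn inequality \eqref{Kapteyn} to the orders $j\pm 1$: writing $z_{j,k}r = (j\pm1)y_\pm$, the difficulty is to check, uniformly over the whole admissible index range and not merely for $k$ fixed, that $y_\pm$ stays bounded away from $1$ by a margin comparable to $h$. This is where the two defining constraints must be used simultaneously: $r \leq 1-h$ together with $r^1_{j,k} = z'_{j,1}/z_{j,k} \geq 1-\frac{h}{2}$ (which forces $z_{j,k} \leq j(1+\frac{h}{4})/(1-\frac{h}{2})$ for $j$ large, as in Lemma \ref{lem9}) combine to give $y_\pm \leq 1 - ch$ for some $c>0$, the gap $\frac{h}{2}$ being precisely what keeps this margin from collapsing. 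Since $g(1-\varepsilon) \sim -c'\,\varepsilon^{3/2}$ as $\varepsilon \to 0$, Kapteyn then yields $|J_{j\pm1}(z_{j,k}r)| \leq Ce^{-Cjh^{3/2}}$ and hence $|u'| \leq z_{j,k}|J_j'(z_{j,k}r)| \leq Cj\, e^{-Cjh^{3/2}}$. Once $u$ and $u'$ are controlled the higher derivatives are mechanical, each differentiation costing only a bounded factor and at most one power $j^2$, which is comfortably absorbed into the bound $j^{n+2}$.
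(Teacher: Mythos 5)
Your proposal is correct and follows essentially the same route as the paper's proof: the first derivative is handled via $2J_j'=J_{j-1}-J_{j+1}$ and the Kapteyn inequality at orders $j\pm1$, exploiting the gap between $r\leq 1-h$ and $r^1_{j,k}\geq 1-\frac{h}{2}$ exactly as in the paper, and the derivatives $n=2,3,4$ are then obtained by bootstrapping the Bessel ODE on $[h,1-h]$, which is also the paper's argument. The only (cosmetic) difference is that you factor out the normalization constant $c_{j,k}\leq Cj^{2/3}$ and work with $u(r)=J_j(z_{j,k}r)$, whereas the paper carries the normalizing integral through the estimates; your sharper intermediate bounds $|u^{(n)}|\leq Cj^{n}e^{-Cjh^{3/2}}$ do hold (the $j^2$-coefficients in the differentiated equation always multiply derivatives of order at most $n-2$), so the claimed conclusion follows.
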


Note that, in the estimates \eqref{estimRjk_n}, the power $3n$ is not optimal. Here, we keep however this power, because it will be enough, and it is simpler in order to be iterated.
Note also that, in the above estimates, we have excluded a neighborhood of $r=0$. This is technically due to the singularity of the polar coordinates, as seen in the proof below.

\begin{proof}[Proof of Lemma \ref{lem9_4derivatives}.]
Let us fix $h$, $C$, $j$ and $k$ as in Lemma \ref{lem9}. 
Using \eqref{def_Rjk}, we have
$$
\frac{d}{dr}(R_{j,k}(r )) = \frac{z_{j,k}J_{j}'(z_{j,k} r)}{\int_0^1 J_j(z_{j,k}r)^2r\, dr}.
$$
Using the well-known identity $2J_{j}'=J_{j-1}-J_{j+1}$ (see \cite{Watson}), 
using the inequality \eqref{ALR1_bis} and the estimate \eqref{estimRjk},
we get
\begin{equation}\label{11:32}
\left\vert\frac{d}{dr}(R_{j,k}(r )) \right\vert \leq \frac{2\pi^2}{C_1\gamma_1'}\frac{1}{1-\frac{h}{2}} j^{4/3} z_{j,k}  \vert J_{j-1}(z_{j,k}r) + J_{j+1}(z_{j,k}r) \vert
\end{equation}
Concerning the term $z_{j,k}$, using \cite[Lemma 5]{Kelliher}, we get
\begin{equation}\label{11:47}
z_{j,k} \leq \pi(j+k) = \pi j ( 1+\frac{k}{j}) \leq \frac{2\pi}{1-\frac{h}{2}} j,
\end{equation}
since $r^1_{j,k} \geq 1-\frac{h}{2}$ implies that $1+\frac{k}{j}\leq 2/(1-\frac{h}{2})$.

Besides, using again the assumption that $r^1_{j,k} = \frac{z'_{j,1}}{z_{j,k}}\geq 1-\frac{h}{2}$, we get $z_{j,k}\leq z'_{j,1}/(1-\frac{h}{2})$. Since $z'_{j,1} = j+\gamma'_1j^{1/3}+\mathrm{o}(j^{1/3}) \leq j ( 1+\frac{h}{4} )$ for $j$ large enough, it follows that $\frac{z_{j,k}}{j-1}\leq (1+\frac{h}{8})(1-\frac{h}{2})$. 
Then, for every $r\in [0,1-h]$, we write $z_{j,k}r = (j-1) y$ with $y=\frac{z_{j,k}}{j}r$, and we get
$$
y = \frac{z_{j,k}}{j-1}r \leq  \frac{(1-h)(1+\frac{h}{8})}{1-\frac{h}{2}} \leq 1-\frac{3h}{8},
$$
whenever $j$ is large enough. Therefore, as in the end of the proof of Lemma \ref{lem1_Rjk} or of Lemma \ref{lem9}, using the Kapteyn inequality we get that, if $j$ is large enough, then
\begin{equation}\label{Jj-1}
\vert J_{j-1}(z_{j,k} r)\vert \leq \exp \left( j g \Big( 1-\frac{3h}{8} \Big) \right),
\end{equation}
for every $r\in[0,1-h]$.

In a completely similar way, we get as well that, if $j$ is large enough, then
\begin{equation}\label{Jj+1}
\vert J_{j+1}(z_{j,k} r)\vert \leq \exp \left( j g \Big( 1-\frac{3h}{8} \Big) \right),
\end{equation}
for every $r\in[0,1-h]$.

Using an asymptotic expansion of $g$, it follows from \eqref{11:32}, \eqref{11:47}, \eqref{Jj-1} and \eqref{Jj+1} that
$$
\vert R_{j,k}'(r)\vert \leq C j^{7/3} \exp (-C j h^{3/2}).
$$
The estimate \eqref{estimRjk_n} with $n=1$ follows.

In order to derive the estimate \eqref{estimRjk_n} with $n=2$, we use the differential equation satisfied by $R_{j,k}(r)$, which is
\begin{equation}\label{odeRjk}
r^2R_{j,k}''(r) + r R_{j,k}'(r) + (z_{j,k}^2r^2-j^2) R_{j,k}(r) = 0.
\end{equation}
It follows from \eqref{odeRjk} that
$$
\vert R_{j,k}''(r)\vert \leq \frac{1}{r} \vert R_{j,k}'(r)\vert + \frac{\vert z_{j,k}^2r^2-j^2\vert}{r^2}\vert R_{j,k}(r)\vert.
$$
At this step, we can see that there is a difficulty in the neighborhood of $r=0$. We are thus obliged, in what follows, to distinguish between what happens in a neighborhood of $r=0$, and in the rest.

Let us assume that $r\geq h$. Then, we get easily that
\begin{equation}\label{estimRjkseconde}
\vert R_{j,k}''(r)\vert \leq C j^{4} \exp (-C j h^{3/2}),
\end{equation}
for some constant $C>0$, and for $r\in[h,1-h]$.

The estimates \eqref{estimRjk_n} with $n=3$ and $n=4$ are established in a similar way, by derivating with respect to $r$ the differential equation \eqref{odeRjk} and then proceeding as above. We do not give the details.
\end{proof}

Now, we are going to extend the estimates of Lemma \ref{lem9_4derivatives} to all possible derivatives, using an induction argument. The result is the following.

\begin{lemma}\label{lem9_allderivatives}
For every $h\in (0,1)$, there exists a constant $C>0$ such that
\begin{equation}\label{estimRjk_n_2}
\left\vert\frac{d^n}{dr^n}(R_{j,k}(r))\right\vert\leq C e^{2n} j^{n+2} \exp ( -C j h^{3/2} ),
\end{equation}
for every $r\in [h,1-h]$, for every $n\in\N$, and for all indices $(j,k)$ such that $r^1_{j,k}\geq 1-\frac{h}{2}$.
\end{lemma}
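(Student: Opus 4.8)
The plan is to establish \eqref{estimRjk_n_2} by induction on $n$, using the second-order Bessel equation \eqref{odeRjk} as the recursion engine, exactly in the spirit of the proof of Lemma \ref{lem9_4derivatives}. Writing $R=R_{j,k}$ and $z=z_{j,k}$ and applying the Leibniz rule to \eqref{odeRjk} differentiated $m=n-2$ times (only $r^2$, $z^2r^2-j^2$ and their first two derivatives are nonzero), one gets, for every $m\geq 2$,
\begin{equation*}
r^2 R^{(m+2)} + (2m+1)\,r\,R^{(m+1)} + (m^2+z^2r^2-j^2)\,R^{(m)} + 2m\,z^2 r\,R^{(m-1)} + m(m-1)\,z^2\,R^{(m-2)} = 0 .
\end{equation*}
Solving for $R^{(m+2)}$ and dividing by $r^2$ expresses the top derivative through the four preceding ones, which is the structure needed for the induction. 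The base cases $n\in\{0,1,2,3,4\}$ are furnished by Lemma \ref{lem9_4derivatives}; since $e^{2n}\geq 1$, its estimate \eqref{estimRjk_n} is a fortiori of the desired form \eqref{estimRjk_n_2} (after possibly enlarging $C$).

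Before running the step I would record the two region estimates already used in Lemma \ref{lem9}, valid for $j$ large and for indices with $r^1_{j,k}=z'_{j,1}/z_{j,k}\geq 1-\tfrac{h}{2}$. First, from $z'_{j,1}=j+\gamma'_1 j^{1/3}+\mathrm{o}(j^{1/3})$ one gets $z_{j,k}\leq 2j$, hence $z^2\leq 4j^2$. Second, on $[h,1-h]$ one has $z_{j,k}r\leq z'_{j,1}(1-h)/(1-\tfrac{h}{2})<j$ for $j$ large, so that $z^2r^2-j^2\leq 0$ and $|z^2r^2-j^2|\leq j^2$; finally $1/r\leq 1/h$, which is exactly why a neighborhood of $r=0$ must be excluded, as in Lemma \ref{lem9_4derivatives}. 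Feeding these into the recurrence gives, on $[h,1-h]$,
\begin{equation*}
|R^{(m+2)}| \leq \frac{1}{h^2}\Big( (2m+1)\,|R^{(m+1)}| + (m^2+j^2)\,|R^{(m)}| + 8m\,j^2\,|R^{(m-1)}| + 4m^2\,j^2\,|R^{(m-2)}| \Big).
\end{equation*}
Writing $M_n=C e^{2n} j^{n+2}\exp(-Cjh^{3/2})$ for the target bound and inserting the induction hypothesis $|R^{(p)}|\leq M_p$ for $p\leq m+1$, the step reduces to the single scalar inequality
\begin{equation*}
\frac{1}{h^2}\Big( (2m+1)e^2 j + m^2 + j^2 + 8m\,e^{-2} j + 4m^2 e^{-4} \Big) \leq e^4 j^2 ,
\end{equation*}
obtained after dividing by $C e^{2m}j^{m+2}\exp(-Cjh^{3/2})$ and using $M_{m+2}/M_m=e^4 j^2$.

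The main obstacle is precisely the verification of this last inequality, and in particular the term $4m^2 e^{-4}/h^2$ coming from the coefficient $m(m-1)z^2\sim m^2 j^2$ that multiplies $R^{(m-2)}$ in the Leibniz expansion. This is where the base $e^2$ in the weight does the work: the two-step gain $M_{m+2}/M_m=e^4 j^2$ must dominate the quadratic-in-$m$ factors produced by differentiation, and the region estimates are used crucially to turn the genuinely dangerous $m(m-1)z^2$ into $4m^2 j^2$, so that the $j^2$ cancels against the $j^2$ gain and one is left comparing $\mathrm{O}(m^2)$ with $e^4 h^2 j^2$. The reason this crude bound is not wasteful is that in the application (the analyticity of $\psi$) the summation weight $\gamma_{j,k}(T,\alpha)\exp(-Cjh^{3/2})$ forces the dominant indices of the series \eqref{psi_restreinte} to satisfy $j\gg n$, i.e. the number of derivatives stays small compared with the large Bessel parameter; this is the regime in which the evanescent (WKB) behaviour of $J_j$ makes each $r$-derivative cost only a bounded multiple of $j$, so the geometric bound $e^{2n}j^{n+2}$ is honest and the induction closes. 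Once \eqref{estimRjk_n_2} is in hand, combining it with the Leibniz rule for $(R_{j,k}^2)^{(n)}$ and summing over the active indices of \eqref{psi_restreinte} — using $\sum\beta^*_{j,k}=1$ and the exponential gap between $\exp(-Cjh^{3/2})$ and $\gamma_{j,k}(T,\alpha)\leq e^{C'j^{2\alpha}}$ with $\alpha<1/2$ — yields bounds $|\psi^{(n)}|\leq C_* n!\,\rho^{-n}$ on $[h,1-h]$, which is the analyticity criterion underlying Proposition \ref{technicalLemma_AnaDisk}.
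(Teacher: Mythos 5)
Your setup is the same as the paper's: an induction on the order of differentiation driven by the Bessel equation \eqref{odeRjk}, with Lemma \ref{lem9_4derivatives} supplying the base cases; your Leibniz identity and the resulting recursive inequality are correct (your coefficients are in fact more accurate than the ones displayed in the paper's proof). The genuine gap is the step you yourself flag as ``the main obstacle'': the scalar inequality
\begin{equation*}
\frac{1}{h^2}\Bigl( (2m+1)e^2 j + m^2 + j^2 + 8m e^{-2} j + 4m^2 e^{-4} \Bigr) \leq e^4 j^2
\end{equation*}
is false on part of the range that the lemma covers. It fails for every $m\gtrsim hj$ (the term $m^2/h^2$ alone then exceeds $e^4j^2$), and it fails for \emph{all} $m$ when $h<e^{-2}$ (the term $j^2/h^2$ alone exceeds $e^4j^2$). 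Since the statement asserts \eqref{estimRjk_n_2} for every $n\in\N$ and every admissible $(j,k)$ with one constant $C$ --- in particular for a fixed admissible $j$ and $n\to\infty$ --- the induction as you run it cannot close. The appeal to the application (``the dominant indices satisfy $j\gg n$'') is not a proof of the lemma, and it is also inaccurate: in the series \eqref{psi_restreinte} every admissible index appears for every $n$, the weights $\beta^*_{j,k}$ are not at one's disposal beyond $\sum\beta^*_{j,k}=1$, and in the final summation $\sum_j j^{n+4}\exp(Cj^{2\alpha}-2Cjh^{3/2})$ the factorial $n!$ is produced precisely by indices with $j$ comparable to $n$, i.e.\ by the regime your induction does not reach.

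To be fair, you have rediscovered exactly the soft spot of the paper's own proof: the paper folds the same $n$-dependent coefficients into the recurrence $c_n=nc_{n-1}+(n^2+1)c_{n-2}+nc_{n-3}+n^2c_{n-4}$ and asserts $c_n\leq 2\exp\bigl(\tfrac{1+\sqrt5}{2}n\bigr)$, but solutions of that recurrence grow super-exponentially (already $c_n\geq nc_{n-1}$ forces $c_n\gtrsim n!$), so the paper's closing claim is no better justified than yours. What is missing in both arguments is a separate treatment of the regime $n\gtrsim hj$. One repair: there one abandons the ODE induction and uses the global bound $\vert J_j^{(n)}(x)\vert\leq 1$, which follows from $J_j^{(n)}=2^{-n}\sum_{i=0}^n(-1)^i\binom{n}{i}J_{j-n+2i}$ and $\vert J_m\vert\leq 1$, together with the normalization lower bound \eqref{ALR1_bis}; this yields $\vert R_{j,k}^{(n)}(r)\vert\leq Cj^{2/3}z_{j,k}^n$ for all $r$, which implies the claimed estimate once $n\geq c\,jh^{3/2}$ (possibly after replacing $e^{2n}$ by $K(h)^n$, which is harmless for Proposition \ref{technicalLemma_AnaDisk}), while the ODE induction, restricted to the complementary regime $n\lesssim hj$ where your scalar inequality does hold, finishes the proof. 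Without such a two-regime argument (or some other repair), your proof --- like the paper's --- does not establish the lemma as stated.
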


\begin{proof}[Proof of Lemma \ref{lem9_allderivatives}.]
%

We are going to proceed with an induction argument.
Let us assume that
\begin{equation}\label{erti}
\vert R_{j,k}^{(i)}(r) \vert \leq C c_i j^{i+2} \exp ( -C j h^{3/2} ),
\end{equation}
for every $i\in\{0,\ldots,n-1\}$, and for every $r\in[h,1-h]$, and let us determine what can be an estimate of $c_n$. Derivating the differential equation $\eqref{odeRjk}$ $n-2$ times, we get
\begin{multline*}
r^2 R_{j,k}^{(n)}(r) + 2nrR_{j,k}^{(n-1)}(r) + n(n-1)R_{j,k}^{(n-2)}(r) \\
= -r R_{j,k}^{(n-1)}(r) - R_{j,k}^{(n-2)}(r) - (z_{j,k}^2r^2-j^2) R_{j,k}^{(n-2)}(r) - 2n z_{j,k}^2 r R_{j,k}^{(n-3)}(r) - n(n-1) z_{j,k}^2 R_{j,k}^{(n-4)}(r) ,
\end{multline*}
from which it follows that
\begin{multline*}
\vert R_{j,k}^{(n)}(r) \vert \leq
\frac{2n+1}{r} \vert R_{j,k}^{(n-1)}(r) \vert
+ \frac{ n(n-1)+1 + \vert z_{j,k}^2r^2-j^2\vert }{r^2} \vert R_{j,k}^{(n-2)}(r) \vert \\
+ \frac{2n z_{j,k}^2}{r}\vert R_{j,k}^{(n-3)}(r)\vert + n(n-1) \frac{z_{j,k}^2}{r^2} \vert R_{j,k}^{(n-4)}(r) \vert .
\end{multline*}
Now, using the inequalities \eqref{erti}, we see that we can define $c_n$ by
$$
c_n = n c_{n-1} + (n^2+1) c_{n-2} + n c_{n-3} + n^2 c_{n-4}.
$$
An easy study of this recurrence relation leads to the estimate
$
c_n \leq 2 \exp \left( \frac{1+\sqrt{5}}{2} n \right).
$
We finally conclude that
$$
\vert R_{j,k}^{(n)}(r) \vert \leq C e^{2n} j^{n+2} \exp ( -C j h^{3/2} ).
$$
The lemma is proved.
\end{proof}

As an immediate corollary, we have the following result.

\begin{lemma}\label{lem_Rjkcarre}
For every $h\in (0,1)$, there exists a constant $C>0$ such that
\begin{equation}\label{estimRjk_ncarre}
\left\vert\frac{d^n}{dr^n}(R_{j,k}(r)^2)\right\vert\leq C (2e^{2})^n j^{n+4} \exp ( -2C j h^{3/2} ),
\end{equation}
for every $r\in [h,1-h]$, for every $n\in\N$, and for all indices $(j,k)$ such that $r^1_{j,k}\geq 1-\frac{h}{2}$.
\end{lemma}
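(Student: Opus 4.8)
The plan is to prove this directly from Lemma \ref{lem9_allderivatives} via the Leibniz product rule, since $R_{j,k}^2 = R_{j,k}\cdot R_{j,k}$. First I would write, for every $n\in\N$,
\begin{equation*}
\frac{d^n}{dr^n}\big(R_{j,k}(r)^2\big)=\sum_{i=0}^n \binom{n}{i}\, R_{j,k}^{(i)}(r)\, R_{j,k}^{(n-i)}(r).
\end{equation*}
Fix $h\in(0,1)$, and restrict to $r\in[h,1-h]$ and to indices $(j,k)$ with $r^1_{j,k}\geq 1-\frac{h}{2}$. Lemma \ref{lem9_allderivatives} furnishes a constant $C_0>0$ such that $\vert R_{j,k}^{(i)}(r)\vert\leq C_0\, e^{2i}\, j^{i+2}\,\exp(-C_0 j h^{3/2})$ for every $i\in\N$, which is exactly the input needed to estimate each factor in the sum.

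Next I would multiply the two factors in a generic summand. For each $i\in\{0,\dots,n\}$ one obtains
\begin{equation*}
\big\vert R_{j,k}^{(i)}(r)\, R_{j,k}^{(n-i)}(r)\big\vert \leq C_0^2\, e^{2i}\,e^{2(n-i)}\, j^{i+2}\,j^{n-i+2}\,\exp(-2C_0 j h^{3/2}) = C_0^2\, e^{2n}\, j^{n+4}\,\exp(-2C_0 j h^{3/2}),
\end{equation*}
a bound that is \emph{uniform in $i$}: the two exponential factors combine to double the decay rate (giving $\exp(-2C_0 jh^{3/2})$), while the powers of $e^2$ and of $j$ add up to $e^{2n}$ and $j^{n+4}$ regardless of $i$. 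Summing over $i$ and using $\sum_{i=0}^n\binom{n}{i}=2^n$ then yields
\begin{equation*}
\left\vert\frac{d^n}{dr^n}\big(R_{j,k}(r)^2\big)\right\vert \leq 2^n\, C_0^2\, e^{2n}\, j^{n+4}\,\exp(-2C_0 j h^{3/2}) = C_0^2\,(2e^2)^n\, j^{n+4}\,\exp(-2C_0 j h^{3/2}),
\end{equation*}
which is precisely the asserted form \eqref{estimRjk_ncarre}, with the generic constant $C$ taken to encode the prefactor $C_0^2$ and the exponential rate $C_0$ inherited from Lemma \ref{lem9_allderivatives}.

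There is no genuine obstacle here; as the statement already announces, this is an immediate corollary, and the only thing to watch is the bookkeeping of constants. Squaring the single-derivative estimate doubles the exponential decay rate (hence the factor $2$ in $\exp(-2Cjh^{3/2})$) and squares the polynomial prefactor (hence $j^{n+4}$ in place of $j^{n+2}$), and the binomial summation contributes the extra $2^n$ that merges with $e^{2n}$ into $(2e^2)^n$. The domain restriction $r\in[h,1-h]$ and the index condition $r^1_{j,k}\geq 1-\frac{h}{2}$ are simply carried over unchanged, since they are exactly the hypotheses under which the derivative bounds of Lemma \ref{lem9_allderivatives} were established. Following the paper's convention that $C$ denotes a positive constant that may change from line to line, one writes the final bound with a single symbol $C$.
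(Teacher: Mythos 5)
Your proof is correct and follows exactly the paper's own argument: Leibniz's rule applied to $R_{j,k}\cdot R_{j,k}$, the derivative bounds of Lemma \ref{lem9_allderivatives} applied to each factor (doubling the exponential decay rate and combining the polynomial prefactors into $j^{n+4}$), and the binomial identity $\sum_{i=0}^n\binom{n}{i}=2^n$ to merge $2^n$ with $e^{2n}$ into $(2e^2)^n$. Your bookkeeping of the constant (keeping $C_0^2$ as prefactor and $C_0$ in the exponential before absorbing both into a generic $C$) is in fact slightly more careful than the paper's write-up.
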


\begin{proof}[Proof of Lemma \ref{lem_Rjkcarre}.]
Using the estimates \eqref{estimRjk_n_2} of Lemma \ref{lem9_allderivatives}, we have
\begin{equation*}
\begin{split}
\left\vert\frac{d^n}{dr^n}(R_{j,k}(r)^2)\right\vert
& \leq \sum_{i=0}^n \begin{pmatrix} n\\ i\end{pmatrix} R_{j,k}^{(i)}(r)R_{j,k}^{(n-i)}(r) \\
& \leq C\sum_{i=0}^n \begin{pmatrix} n\\ i\end{pmatrix} e^{2n} j^{n+4}  \exp ( -2C j h^{3/2} )\\
& \leq C 2^n e^{2n} j^{n+4}  \exp ( -2C j h^{3/2} )
\end{split}
\end{equation*}
and the estimate follows.
\end{proof}

Let us finally finish the proof of Proposition \ref{technicalLemma_AnaDisk}.
Recall that the function $\psi$ is given by the formula \eqref{psi_restreinte}.
From the expression \eqref{gammajkdisk}, and using the inequality \eqref{11:47} which is valuable because of the assumption that $r_{j,k}^1\geq 1-\frac{h}{2}$, we have
$$
\gamma_{j,k}(\alpha,T)\leq \exp\left(  \left(\frac{2\pi}{1-\frac{h}{2}}\right)^{2\alpha} j^{2\alpha}   \right)
$$
Using the estimates \eqref{estimRjk_ncarre} of Lemma \ref{lem_Rjkcarre}, It follows that, for every $r\in(h,1-h)$, we have
\begin{equation*}
\begin{split}
\vert \psi^{(n)}(r)\vert 
&\leq \sum_{\substack{(j,k)\in \N\times\N^* \\ r^1_{j,k}\geq 1-\frac{h}{2}}}\gamma_{j,k}(T,\alpha)\beta^*_{j,k} \left\vert\frac{d^n}{dr^n}(R_{j,k}(r)^2)\right\vert \\
&\leq C (2e^2)^n \sum_{j=0}^{+\infty} j^{n+4} \exp \left( C j^{2\alpha} - 2 C j h^{3/2} \right),
\end{split}
\end{equation*}
where the constant $C>0$ is independent of $n$. Now, using the easy fact that
$$
\sum_{j=0}^{+\infty} j^n e^{-j} \leq \sum_{j=0}^{+\infty} (j+1)(j+2)\cdots(j+n) e^{-j} = \frac{n!}{(1-e^{-1})^{n+1}} ,
$$
we finally get that
$
\vert \psi^{(n)}(r)\vert \leq C^n n!,
$
for every $n\in\N$ and every $r\in(h,1-h)$, for some $C>0$ independent of $n$.
From these estimates, and from standard theorems on analytic functions, we finally infer that the function $\psi$ is analytic on $(h,1-h)$.
Since $h$ was taken arbitrary, Proposition \ref{technicalLemma_AnaDisk} is proved.

\paragraph{End of the proof of Theorem \ref{propLBSC1}.}

First of all, we are going to tackle the problem that has arised at $r=0$. Actually, due to polar coordinates, we have created a spurious singularity at $r=0$.
Let us then prove directly that the optimal domain $\omega^*$ is has a finite number of connected components, also in a neighborhood of the center of the disk.
Actually, let us prove directly that $\omega^*$ is semi-analytic in a neighborhood of the center of the disk.

To this aim, we are going to apply a minimax argument again. Using the notation $\mathcal{I}=\N\times \N^*\times \{1,2\}$ (already introduced) and setting
$$
\mathcal{T}=\left\{\beta=(\beta_{j,k,m})_{(j,k,m)\in\mathcal{I}}\in \ell^1(\R_{+}) \mid \sum_{(j,k,m)\in\mathcal{I}}\beta_{j,k,m}=1\right\} ,
$$
we have the equality
$$
J(a) = \inf_{(j,k,m)\in\mathcal{I}}  \gamma_{j,k}(T,\alpha)\int_{\Omega}a(x)\phi_{j,k,m}(x)^2  \, dx
=  \inf_{\beta\in\mathcal{S}}G(a,\beta),
$$
with
$$
G(a,\beta)=\sum_{(j,k,m)\in \mathcal{I}}\gamma_{j,k}(T,\alpha)\beta_{j,k,m}\int_{\Omega}a(x)\phi_{j,k,m}(x)^2 \, dx ,
$$
for every $a\in \overline{\mathcal{U}}_L$.
Therefore, we have
$$
\sup_{a\in \overline{\mathcal{U}}_{L}}J(a)=\sup_{a\in \overline{\mathcal{U}}_{L}}\inf_{\beta\in\mathcal{T}}G(a,\beta).
$$
We can then apply the minimax theorem of \cite{hartung} as previously, noticing that $G$ satisfies the same assumptions as the function $F$ of the radial case (in particular, the inf-compactness property must be underlined and its proof is similar).
Then, there exists a saddle point $(a^*,\beta^*)\in\overline{\mathcal{U}}_L\times\mathcal{T}$, and according to the whole analysis that has been done, we have $a^*=\chi_{\omega^*}$. Moreover, there exists a unique $\Xi>0$ such that
$$ 
a^*(x) = \left\{ \begin{array}{rcl}
1 & \textrm{if} & \Psi(x)>\Xi,\\
0 & \textrm{if} & \Psi(x)<\Xi,
\end{array}\right.  
$$
with
\begin{equation}\label{def_Psi}
\Psi(x) = \sum_{(j,k,m)\in \mathcal{I}}\gamma_{j,k}(T,\alpha)\beta_{j,k,m}^* \phi_{j,k,m}(x)^2.  
\end{equation}
The only fact that remains to be proved is the fact that $\omega^*$ is semi-analytic in a neighborhood of the center of the disk. To this aim, it suffices to prove that the function $\Psi$ defined by \eqref{def_Psi} is analytic in the open unit disk. The proof of that fact follows exactly the same lines as the proof of Proposition \ref{technicalLemma_AnaDisk}. The only difference is that, instead of using polar coordinates, we keep the "intrinsic" variable $x\in\Omega$, and, in order to iterate the estimates of the derivatives of the eigenfunctions, we use the fact that, by definition, $\triangle \phi_{j,k,m} = -z_{j,k}^2\phi_{j,k,m}$ and we make use of Sobolev embeddings (as in the proof of Theorem 2 in \cite{PTZobspb1}).
We do not provide all details here, because the proof is already very lengthy and the arguments used here are similar to those given before.
It can be noted that we could as well have applied the latter minimax argument at the beginning of the proof, instead of focusing on the radial problem. But then in order to prove that the optimal domain is radial it would have been required anyway to study the radial problem. Here, we made the choice of beginning with the radial problem.

Now, combining all facts that have been proved earlier, we can assert that, if $0<\alpha <1/2$, or if $\alpha=1/2$ and $T$ is small enough, then there exists a unique optimal domain $\omega^*$, which is radial, and for which neither the assumption $\Hdeux$ nor its weakened version \eqref{LBSC_weakened} are satisfied. Moreover, the number of connected components of $\omega^*$ intersected with any proper compact subset of $\Omega$ is finite.

It remains to prove that the optimal set $\omega^*$ has an infinite number of concentric rings accumulating at the boundary.
Assume now by contradiction that the number of connected components of $\omega_{r}^*$ is finite. Then, there must exist $\eta\in (0,1)$ such that $(1-\eta,1)\subset \omega_{r}$ and we deduce from \cite[Lemma 3.1, (3.11)]{Lagnese} that\footnote{Note that this can be as well straightforwardly inferred from the quantum limits mentioned at the beginning of the proof of Lemma \ref{lemma:tech_ineq}.}
$$
\inf_{(j,k)\in \N\times \N^*}\int_{\omega_r^*} R_{j,k}(r )^2  r\, dr >0.
$$
Since the coefficients $\gamma_{j,k}$ grow exponentially, it follows immediately that the assumption \eqref{LBSC_weakened} is satisfied, which raises a contradiction. 

The proof of Theorem \ref{propLBSC1} is complete.

\subsection{Proof of Theorem \ref{theo:stokes}}\label{Sec:proofstokes}
First of all, it is clear that the assumptions $\Hun$ and $\Htrois$ are satisfied.
Using the Hilbert basis of eigenfunctions given by \eqref{Stokes_eig0} and \eqref{Stokes_eig1}, we have
$$
J(a)=\inf_{\substack{j\in\N \\ k\in \N^*}}\min_{m=1,2} \int_{0}^{2\pi}\!\!\!\int_{0}^1 a(r\cos \theta,r\sin \theta)\phi_{j,k,m}(r,\theta)^2  r \, drd\theta ,
$$
for every $a\in  \overline{\mathcal{U}}_{L}$. First of all, by a straightforward adaptation of the proof of Lemma \ref{lem6}, we prove that the problem of maximizing $J$ over $\overline{\mathcal{U}}_L$ is equivalent to the problem of maximizing $J$ over the radial functions of $\overline{\mathcal{U}}_L$. In other words, we have
$$
\max_{a\in \overline{\mathcal{U}}_{L}}J(a)=\max_{\substack{b\in L^\infty(0,1;[0,1])\\ \int_{0}^1b( r)\, rdr=\frac{L}{2}}}J_{r}(b),
$$
where 
$$
J_{r}(b)=\inf_{\substack{j\in\N \\ k\in \N^*}}\frac{\gamma_{j,k}(T)}{2}\int_{0}^1 b( r)\left(\frac{j^2}{r^2}f_{j,k}(r )^2+f_{j,k}'(r )^2\right)  r\, dr,
$$
and
$$
f_{j,k}(r )=\frac{J_j(\sqrt{\lambda_{j,k}} r)-J_j(\sqrt{\lambda_{j,k}})r^j}{\lambda_{j,k}\vert J_j(\sqrt{\lambda_{j,k}})\vert r}.
$$
We are going to prove that
\begin{equation}\label{15:17}
\lim_{j+k\to +\infty}\gamma_{j,k}(T)\int_{0}^1 b( r)\left(\frac{j^2}{r^2}f_{j,k}(r )^2+f_{j,k}'(r )^2\right) \, rdr=+\infty,
\end{equation}
for every $b\in L^\infty(0,1;[0,1])$ such that $\int_{0}^1b( r) r\, dr=\frac{L}{2}$
(this implies $\Hdeux$ for the radial problem).

Indeed, for such a function $b$, there exists $\varepsilon>0$ and a nontrivial subinterval $[\alpha,\beta]\subset [0,1]$, with $\alpha>0$, such that the restriction of $b$ to the interval $(\alpha,\beta)$ is nontrivial. More precisely, we assume that the restriction of $b$ to the interval $(\alpha,\frac{\alpha+\beta}{2})$ is nontrivial and also that the restriction of $b$ to the interval $(\frac{\alpha+\beta}{2},\beta)$ is nontrivial

In order to prove \eqref{15:17}, it suffices to prove that
\begin{equation}\label{tech_ineq}
\lim_{j+k\to +\infty}\gamma_{j,k}(T)\int_{\alpha}^\beta b(r) \left(\frac{j^2}{r^2}f_{j,k}(r )^2+f_{j,k}'(r )^2\right) r\, dr=+\infty.
\end{equation}
It suffices to prove that fact for $j\geq 0$ (and then we assume $j\geq 0$ in what follows, which avoids to write $\vert j\vert$ in the sequel).
To prove \eqref{tech_ineq}, let us first note, that, using the fact that $(x^{-n} J_n(\alpha x))' = -\alpha x^{-n}J_{n+1}(\alpha x)$ for every $\alpha>0$, every $x\geq 0$ and every $n\in\N$ (see \cite{Watson}), we first get
$$
r^{-j} J_j(\sqrt{\lambda_{j,k}} r)-J_j(\sqrt{\lambda_{j,k}})  =  \sqrt{\lambda_{j,k}} \int_{r}^1 \frac{J_{j+1}(\sqrt{\lambda_{j,k}}x)}{x^j}\, dx
$$
and then
$$
f_{j,k}(r ) = \frac{r^j}{\lambda_{j,k}\vert J_j(\sqrt{\lambda_{j,k}})\vert r} \left( r^{-j} J_j(\sqrt{\lambda_{j,k}} r)-J_j(\sqrt{\lambda_{j,k}}) \right)
=
\frac{r^{j-1}}{\sqrt{\lambda_{j,k}}}\int_{r}^1 \frac{J_{j+1}(\sqrt{\lambda_{j,k}}x)}{x^j|J_{j}(\sqrt{\lambda_{j,k}})|}\, dx.
$$
This implies that
\begin{multline*}
\int_{\alpha}^\beta b(r) f'_{j,k}(r )^2 r\, dr + 
2\frac{j-1}{\lambda_{j,k}}\int_{\alpha}^\beta b(r) r^{j-2} \frac{J_{j+1}(\sqrt{\lambda_{j,k}}r)}{|J_{j}(\sqrt{\lambda_{j,k}})|} \int_{r}^1\frac{J_{j+1}(\sqrt{\lambda_{j,k}}x)}{x^j|J_{j}(\sqrt{\lambda_{j,k}})|}\, dx dr \\
 \geq \frac{1}{\lambda_{j,k}}\int_{\alpha}^\beta b(r) \frac{J_{j+1}(\sqrt{\lambda_{j,k}}r)^2}{rJ_{j}(\sqrt{\lambda_{j,k}})^2}\, dr.
\end{multline*}
Noticing that for every $j\geq 2$,
\begin{equation*}
\begin{split}
\frac{2(j-1)}{\sqrt{\lambda_{j,k}}}\int_{\alpha}^\beta b(r) \frac{J_{j+1}(\sqrt{\lambda_{j,k}}r)}{|J_{j}(\sqrt{\lambda_{j,k}})|}  \frac{f_{j,k}(r )}{r}\, dr  
&=  2(j-1)^2\int_{\alpha}^{\beta} b(r) \frac{f_{j,k}(r )^2}{r^2}\, dr \\
& \qquad - 2(j-1)\int_{\alpha}^\beta b(r) \frac{f_{j,k}(r )f'_{j,k}(r )}{r}\, dr \\
& \leq  \frac{j}{\alpha}\int_{\alpha}^\beta b(r) \left(\frac{j^2}{r^2}f_{j,k}(r )^2+f_{j,k}'(r )^2\right)  r\, dr,
\end{split}
\end{equation*}
we obtain
\begin{eqnarray}\label{eq:fjk}
\int_{\alpha}^\beta b(r) \left(\frac{j^2}{r^2}f_{j,k}(r )^2+f_{j,k}'(r )^2\right) \, rdr& \geq & \frac{\alpha}{\lambda_{j,k}(j+\alpha)} \int_{\alpha}^\beta b(r) \frac{J_{j+1}(\sqrt{\lambda_{j,k}}r)^2}{rJ_{j}(\sqrt{\lambda_{j,k}})^2}\, dr \nonumber\\
& \geq & \frac{\alpha}{2\pi^2\beta^2(j+k)^2(j+\alpha)}\int_{\alpha}^\beta b(r)  R_{j+1,k}(r )^2\, rdr,
\end{eqnarray}
where the functions $R_{j,k}$ are defined by \eqref{def_Rjk}, and where we have used the estimate of \cite[Lemma 5]{Kelliher}, as already done before.

Now, in order to derive \eqref{tech_ineq}, with the estimate \eqref{eq:fjk}, it suffices to prove the following lemma.

\begin{lemma}\label{lemma:tech_ineq}
We have
\begin{equation}\label{lemm11}
\lim_{j+k\to +\infty}\frac{\gamma_{j-1,k}(T)}{(j+k)^2(j+\alpha)}\int_{\alpha}^\beta b(r) R_{jk}(r )^2\, rdr=+\infty.
\end{equation}
\end{lemma}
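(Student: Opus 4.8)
The plan is to exploit a strong imbalance of scales. The weight $\gamma_{j-1,k}(T)$ carries the eigenvalue $\lambda_{j-1,k}=z_{|j-1|+1,k}^2$, which by \cite[Lemma 5]{Kelliher} satisfies $(j+k)^2\le\lambda_{j-1,k}\le\pi^2(j+k+1)^2$; hence $\gamma_{j-1,k}(T)\ge(e^{2(j+k)^2T}-1)/(2\lambda_{j-1,k})$ grows \emph{super-exponentially}, with a quadratic exponent in $j+k$. By contrast I will only need a lower bound on $\int_\alpha^\beta b(r)R_{j,k}(r)^2\,r\,dr$ that decays at most exponentially, with an exponent \emph{linear} in $j+k$; the quotient in \eqref{lemm11} then diverges. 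It is essential to observe that one really needs such a lower bound and not mere positivity: the quantum limits recalled at the beginning of the proof (vague convergence of $R_{j,k}(r)^2r\,dr$ to the Dirac at $r=1$ in the whispering-gallery regime $j/k\to+\infty$, and to the measures $f_s\,dr$ of \eqref{mesure_Burq} for bounded ratios) show that along concentrating subsequences the integral genuinely tends to $0$, so only the super-exponential growth of $\gamma_{j-1,k}(T)$ can rescue the limit.

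The heart of the argument is the uniform-in-$(j,k)$ lower bound on the weighted integral. Since $b$ is nontrivial on $(\alpha,\beta)$, I fix $\varepsilon>0$ and a measurable set $S\subset(\alpha,\beta)$ of positive Lebesgue measure with $b\ge\varepsilon$ on $S$; shrinking $S$ if necessary I may assume $S\subset(\alpha,\beta')$ with $\beta'<1$, so that the annulus $A_S=\{x\in\R^2\mid |x|\in S\}$ has positive measure and is compactly contained in $\Omega$. The key point is that the functions $(r,\theta)\mapsto R_{j,k}(r)Y_{j,m}(\theta)$ are exactly the Dirichlet-Laplacian eigenfunctions of the disk, of unit $L^2(\Omega)$ norm and eigenvalue $z_{j,k}^2$, and that, using $\int_0^{2\pi}Y_{j,m}(\theta)^2\,d\theta=1$,
\begin{equation*}
\int_S R_{j,k}(r)^2\,r\,dr=\int_{A_S}\phi_{j,k,m}(x)^2\,dx .
\end{equation*}
As in the proof of Theorem \ref{propLBSC1}, I pick a ball $B(x_0,\rho)\subset\Omega$ such that $E:=A_S\cap B(x_0,\rho)$ has positive measure, and invoke the lower bound of \cite{AEWZ} (applicable since the disk is smooth, hence piecewise $C^1$ and locally star-shaped): there is $C>0$ with $\int_E|\phi_{j,k,m}|\,dx\ge C^{-1}e^{-Cz_{j,k}}$, whence $\int_E\phi_{j,k,m}^2\,dx\ge C^{-2}|E|^{-1}e^{-2Cz_{j,k}}$ by Cauchy--Schwarz. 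Since $b\ge\varepsilon$ on $A_S$ and $z_{j,k}\le\pi(j+k)$, this yields, for some $c>0$ independent of $(j,k)$,
\begin{equation*}
\int_\alpha^\beta b(r)R_{j,k}(r)^2\,r\,dr\ \ge\ \varepsilon\int_{A_S}\phi_{j,k,m}^2\,dx\ \ge\ \varepsilon\int_E\phi_{j,k,m}^2\,dx\ \ge\ c\,e^{-2C\pi(j+k)} .
\end{equation*}

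Combining the two estimates gives
\begin{equation*}
\frac{\gamma_{j-1,k}(T)}{(j+k)^2(j+\alpha)}\int_\alpha^\beta b(r)R_{j,k}(r)^2\,r\,dr\ \ge\ \frac{c\,(e^{2(j+k)^2T}-1)\,e^{-2C\pi(j+k)}}{2\lambda_{j-1,k}(j+k)^2(j+\alpha)} ,
\end{equation*}
and since $\lambda_{j-1,k}\le\pi^2(j+k+1)^2$ the denominator is polynomial in $j+k$, whereas the numerator grows like $e^{2(j+k)^2T}$, a quadratic exponent that dominates the linear exponent $2C\pi(j+k)$ as $j+k\to+\infty$. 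The right-hand side therefore tends to $+\infty$, which is \eqref{lemm11}. The main obstacle is precisely this uniform lower bound on the $b$-weighted integral: because the mass of $R_{j,k}(r)^2r\,dr$ may escape to the boundary, neither a pointwise estimate nor the quantum limits alone provide a positive lower bound, and one is forced to use the uniform spectral lower bound of \cite{AEWZ} over arbitrary measurable sets — the whole point being that its linear-in-$z_{j,k}$ loss is negligible against the quadratic exponential gain of the parabolic weight $\gamma_{j-1,k}(T)$ (here $\alpha=1>1/2$, which is why any $T>0$ suffices).
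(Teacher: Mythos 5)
Your proof is correct, but it follows a genuinely different route from the paper's. The paper proves Lemma \ref{lemma:tech_ineq} by splitting the indices according to the concentration parameter $r^1_{j,k}=z'_{j,1}/z_{j,k}$: when $r^1_{j,k}\geq\frac{\alpha+\beta}{2}$ (whispering-gallery regime) it uses the monotonicity of $J_j$ on $[0,z'_{j,1}]$ and the small-argument asymptotics $J_j(x)\sim x^j/(2^jj!)$ to get a lower bound of order $e^{-Cj\log j}$, which the quadratic-exponential weight absorbs; when $r^1_{j,k}<\frac{\alpha+\beta}{2}$ it approximates $b$ in $L^3$ by a smooth function and combines the vague convergence to the quantum limits \eqref{mesure_Burq} with the uniform $L^{3/2}$ bounds of Lemmas \ref{lem_limitsBurq} and \ref{lem_RjkBurq} to obtain a lower bound by a positive constant independent of $(j,k)$. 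You instead bypass the case distinction entirely: you convert the radial integral into $\int_{A_S}\phi_{j,k,m}^2\,dx$ over an annular set compactly contained in the disk, and invoke the uniform spectral lower bound of \cite{AEWZ} (exactly as the paper does in the proof of Theorem \ref{propLBSC1}), whose loss $e^{-2Cz_{j,k}}$ is linear in $j+k$ and hence negligible against the quadratic exponent in $\gamma_{j-1,k}(T)$; your identification of the Stokes weight's quadratic growth via $\lambda_{j-1,k}=z_{j,k}^2\geq(j+k)^2$ from \cite[Lemma 5]{Kelliher} is the correct and sufficient input. What each approach buys: yours is shorter, needs only nontriviality of $b$ somewhere on $(\alpha,\beta)$ (not on both halves), and requires none of the Bessel asymptotics or vague-convergence machinery; the paper's is sharper (it shows the integral is bounded below by a constant off the whispering-gallery regime) and, more importantly, its technique is the one that survives when the exponential weight is only $e^{2T\mu^{s}}$ with $s\leq 1/2$ — precisely the fractional-disk situation of Lemma \ref{lem_activeindices}, where the linear-exponential loss of \cite{AEWZ} would no longer be dominated. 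So your argument is tailored to the Stokes case (full Laplacian scaling), while the paper's keeps the Stokes proof aligned with the machinery it must develop anyway for the harder case $\alpha<1/2$. One cosmetic caveat: in your final parenthetical you use $\alpha$ both for the interval endpoint appearing in \eqref{lemm11} and for the fractional power of the Laplacian; in this lemma $\alpha$ is the endpoint, and the relevant fact is simply that the Stokes eigenvalues grow like $(j+k)^2$.
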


\begin{proof}[Proof of Lemma \ref{lemma:tech_ineq}]
We are going to use the quantum limits recalled at the beginning of the proof of Proposition \ref{technicalLemma_AnaDisk}. Recall in particular the notation $r^1_{j,k}=\frac{z'_{j,1}}{z_{j,k}}$ used in that proof, and its role.

Let $j\in\N$ and $k\in\N^*$ be such that $j+k$ is large. We distinguish between two cases, in function of the value of $r^1_{jk}$ with respect to $\frac{\alpha+\beta}{2}$.

\medskip

If  $r^1_{j,k}\geq\frac{\alpha+\beta}{2}$ then
$$
R_{jk}(r)>  \frac{J_j(x)}{\sqrt{\int_0^1 J_j(z_{jk}r)^2 r \, dr}} ,
$$
for $x>0$ small enough, for every $r\in [x,r^1_{jk}]$, due to the fact that the function $J_j$ is increasing on $[0,r^1_{jk}]$. Now, using the facts that $J_j(y)=\frac{1}{\pi}\int_0^\pi \cos(j\tau-\sin\tau)d\tau$ (see \cite{AS}) and thus $\vert J_j(y)\vert\leq 1$, and that $J_j(x)\sim \frac{x^j}{2^j j!}$ for $x>0$ small (see \cite{Watson}), it follows that $R_{jk}(r)>  \frac{x^j}{2^j j!}$ for some given $x>0$, for every $r\in[\max(\alpha,x),\beta]$ (assuming $x<\beta$).
Besides, there holds $\gamma_{jk}(T)=\frac{e^{2\lambda_{jk}T}-1}{2\lambda_{jk}}$, and $\lambda_{jk}=z_{jk}^2>j^2$. Then \eqref{lemm11} follows easily, using the fact that $b$ is nontrivial along $(\alpha,\frac{\alpha+\beta}{2})$.

\medskip

If $r^1_{j,k}<\frac{\alpha+\beta}{2}$, then as in the proof of Lemma \ref{lem_activeindices},  we are going to use weak limits that are established for the vague convergence only. It is then required to approximate $b$ with a smooth function $b_1$.
Let then $\varepsilon>0$ arbitrary, and let $b_1$ be a nonnegative smooth function defined on $[0,1]$ (and obtained for instance by convolution) such that
\begin{equation}\label{approxb}
\Vert b - b_1 \Vert_{L^3} \leq \varepsilon.
\end{equation}
Moreover, since $b$ is nontrivial along $(\frac{\alpha+\beta}{2},\beta)$, we can assume as well that the restriction of $b_1$ to $(\frac{\alpha+\beta}{2},\beta)$ is nontrivial.

Now we proceed as in the proof of Lemma \ref{lem_activeindices}. 
We write
\begin{multline}\label{azer4_bis}
\int_\frac{\alpha+\beta}{2}^\beta b(r) R_{j,k}(r)^2 r \, dr
= 
\int_\frac{\alpha+\beta}{2}^\beta b(r) f_s(r) \, dr + 
\int_\frac{\alpha+\beta}{2}^\beta b_1(r) \left( R_{j,k}(r)^2 r - f_s(r) \right) \, dr \\
+ \int_\frac{\alpha+\beta}{2}^\beta (b(r)-b_1(r)) \left( R_{j,k}(r)^2 r - f_s(r) \right) \, dr .
\end{multline}
From Lemmas \ref{lem_limitsBurq} and \ref{lem_RjkBurq}, there exists $C>0$ such that $\Vert f_s\Vert_{L^{3/2}}\leq C$ for every $s\in[0,\frac{\alpha+\beta}{2}]$, and $\Vert R_{j,k}^2 r\Vert_{L^{3/2}}\leq C$ for all indices $(j,k)$ such that $r^1_{j,k}<\frac{\alpha+\beta}{2}$.
Using the H\"older inequality, we have, using \eqref{approxb},
\begin{equation}\label{18h27_bis}
\begin{split}
\left\vert  \int_\frac{\alpha+\beta}{2}^\beta (b(r)-b_1(r)) \left( R_{j,k}(r)^2 r - f_s(r) \right) \, dr  \right\vert 
&\leq \Vert b-b_1\Vert_{L^3} \left( \Vert R_{j,k}^2 r\Vert_{L^{3/2}} + \Vert f_s\Vert_{L^{3/2}} \right)  \\
&\leq 2C \Vert b^*-b\Vert_{L^3} \\
&\leq 2C\varepsilon.
\end{split}
\end{equation}
Then, from \eqref{azer4_bis} and \eqref{18h27_bis}, we get that
$$
\int_\frac{\alpha+\beta}{2}^\beta b(r) R_{j,k}(r)^2 r \, dr
\geq  \int_\frac{\alpha+\beta}{2}^\beta b(r) \, dr +  \int_\frac{\alpha+\beta}{2}^\beta b_1(r) \left( R_{j,k}(r)^2 r - f_s(r) \right) \, dr - 2C \varepsilon,
$$
and then, for indices $(j,k)$ such that $r^1_{j,k} = \frac{z'_{j,1}}{z_{j,k}} < \frac{\alpha+\beta}{2}$, with $j+k$ large enough, according to the limit \eqref{mesure_Burq} which is valuable in vague topology, it follows (note that $b_1$ is smooth) that the integral $\int_\frac{\alpha+\beta}{2}^\beta b_1(r) \left( R_{j,k}(r)^2 r - f_s(r) \right) \, dr$ is small, and we get, if $j+k$ is small enough, and if $\varepsilon$ is small enough, that
$$
\frac{\gamma_{j-1,k}(T)}{(j+k)^2(j+\alpha)} \int_\alpha^\beta b(r) R_{j,k}(r)^2 r \, dr
\geq \frac{1}{4}\frac{\gamma_{j-1,k}(T)}{(j+k)^2(j+\alpha)})\int_\frac{\alpha+\beta}{2}^\beta b(r) \, dr.
$$
Since the right-hand side of the inequality tends to $+\infty$, the result follows.
\end{proof}

The proof of Theorem \ref{theo:stokes} is complete.

\section{Conclusion}\label{sec_ccl}
Considering general parabolic equations on a bounded open connected subset $\Omega$ of $\R^n$, we have modeled the problem of optimal shape and location of the observation domain having a prescribed measure, in terms of maximizing a spectral functional over all measurable subsets of fixed Lebesgue measure. This spectral functional has been interpreted as a randomized observability constant which, in contrast to the classical deterministic one, corresponds to an average version of the classical observability inequality over random initial data.

We have given sufficient spectral assumptions under which we are able to prove that the resulting optimal design problem has a unique solution. Moreover, the optimal domain can be built from a truncated version of the spectral functional, thus with a finite number of modes only.
The optimal domain is semi-analytic and hence has a finite number of connected components, which is in strong contrast with previous results obtained for conservative wave and Schr\"odinger equations.

We have proved that our results cover the case of the Stokes equation in the disk and of anomalous diffusion equations in which the operator is given by an arbitrary positive power $\alpha$ of the negative of the Dirichlet-Laplacian.
Using a refined and highly technical analysis, we have been able to prove that, for anomalous diffusion equations, the complexity of the optimal domain may depend both on the geometry of the domain and on the value of $\alpha$.
In particular we have proved that, in the unit square of $\R^2$, the optimal domain has a finite number of connected components, and this, independently on the value of $\alpha$. In contrast, in the unit disk of $\R^2$, the optimal domain consists of a finite number of rings if $\alpha>1/2$, and of an infinite number of rings accumulating at the boundary if $\alpha<1/2$ or if $\alpha=1/2$ and $T$ is small enough.
These properties have been illustrated on several numerical simulations.

To conclude, let us provide several further comments and open problems.

\subsection{Exponential concentration properties of eigenfunctions}
In Section \ref{ex4}, in order to prove Theorem \ref{theoLBSC3}, we have used in particular a minimax argument.
It can be noted that this argument can be applied as well in the general case, and will lead us to comment on the possible concentration properties of eigenfunctions.

Let us provide the general minimax argument.
Setting
$$
\mathcal{S}=\Big\{\beta=(\beta_{j})_{j\in\N^*}\in \ell^1(\R_{+}) \mid \sum_{j\in\N^*}\beta_{j}=1\Big\} ,
$$
as in Section \ref{ex4} we have the equality
$$
J(a) = \inf_{j\in\N^*}  \gamma_{j}(T)\int_\Omega a(x) \phi_j(x)^2\, dx
=  \inf_{\beta\in\mathcal{S}}F(a,\beta),
$$
with
$$
F(a,\beta)=\sum_{j\in\N^*}\gamma_{j}(T)\beta_{j}\int_\Omega a(r) \phi_j(x)^2\, dx ,
$$
for every $a\in \overline{\mathcal{U}}_L$.
Therefore, we have
$$
\sup_{a\in \overline{\mathcal{U}}_L}J(a)=\sup_{a\in \overline{\mathcal{U}}_L}\inf_{\beta\in\mathcal{S}}F(a,\beta).
$$
The function $F$ is upper semi-continuous with respect to its first variable, lower semi-continuous with respect to its second variable and concave-convex.
The set $\overline{\mathcal{U}}_L$ is ($L^\infty$ weakly star) compact, and besides, although the set $\mathcal{S}$ is not compact, the function $F$ is however inf-compact. Indeed for $a(\cdot)=L$, one has $F(L,\beta)=L\sum_{j\in\N^*}\gamma_{j}(T)\beta_{j,k}$, and then, since $\Real(\lambda_{j})$ tends to $+\infty$ as $j$ tends to $+\infty$ by assumption, it follows that the coefficients $\gamma_{j}(T)$ defined by \eqref{defgammaj} have an exponential increase, and therefore the set $\{\beta \in \mathcal{S}\mid F(L,\beta)\leq \lambda\}$ is compact in $\ell^1(\R)$, for every $\lambda\in\R$. 
Then, it follows from \cite[Theorem 1]{hartung} that there exists a saddle point $(a^*,\beta^*)\in\overline{\mathcal{U}}_L\times\mathcal{S}$ of the functional $F$, which implies in particular that
\begin{equation*}
J(a^*)= \max_{a\in \overline{\mathcal{U}}_L}F(a,\beta^*) 
= \max_{a\in \overline{\mathcal{U}}_L} \int_\Omega  \psi(x) a(x)\, dx ,
\end{equation*}
with the function $\psi$ defined by
\begin{equation}\label{def_psi_gen}
\psi(x) = \sum_{j\in\N^*}\gamma_{j}(T)\beta_{j}^* \phi_j(x)^2 .
\end{equation}
In other words, the function $a^*$ has to maximize a given integral (under a volume constraint), and therefore is characterized in terms of the level sets of the function $\psi$. More precisely, there exists a unique $\xi>0$ such that
$$ a^*(x) = \left\{ \begin{array}{rcl}
1 & \textrm{if} & \psi(x)>\xi,\\
0 & \textrm{if} & \psi(x)<\xi,
\end{array}\right.$$
and the values of $a^*(x)$ are not determined by such first-order conditions on subsets of positive measures along which $\psi(x)=\xi$. Equivalently, there holds $\psi(x)=\xi$ on the set $\bigcup_{\varepsilon\in (0,1)}\{\varepsilon \leq a^*\leq 1-\varepsilon\}$.
%
%
As a consequence, if we are able to establish that, under appropriate assumptions, the function $\psi$ defined by \eqref{def_psi_gen} is analytic in $\Omega$ (or at least, cannot be constant on any subset of positive measure), then the function $a^*$ can only take the values $0$ and $1$, and therefore $a^*=\chi_{\omega^*}$ is the characteristic function of some subset $\omega^*$ such that $\chi_{\omega^*}\in\mathcal{U}_L$.

We have seen in Section \ref{ex4} with the example of the unit disk that, in order to prove that the switching function $\psi$ is analytic, fine asymptotic properties of the eigenfunctions have to be used, in combination with a study of active and inactive indices.

As explained in Section \ref{sec_LBSC}, if every quantum limit associated with the eigenfunctions $\phi_j$ contains a density which is positive over the whole $\Omega$, then the uniform lower bound assumption $\Hdeux$ of Theorem \ref{mainTheo} is satisfied, and the existence and uniqueness of a semi-analytic optimal domain follows. Therefore the worst possible case is when there exist quantum limits which are completely concentrated, such as a Dirac along the projection on the configuration space of a closed geodesic $\mathcal{C}$ of the phase space (scarring eigenfunctions). 
In accordance with the proof in Section \ref{ex4} (see in particular Lemma \ref{lem_activeindices}), indices $j$ associated with subsequences of $\phi_j^2\, dx$ converging to a completely singular measure along $\mathcal{C}$ may be active, that is, $\beta_j^*> 0$ a priori. Then, in order to ensure that $\psi$ is analytic, it is required to know that the scarring subsequences of eigenfunctions $\phi_j$ enjoy exponential concentration properties. More precisely it is required to know that, outside of any neighborhood of $\mathcal{C}$, the subsequences $\phi_j$'s which concentrate on $\mathcal{C}$ can be bounded from above by exponentials decreasing with $j$.
For completely integrable systems, such concentration properties are expected to occur on invariant tori, but up to our knowledge no general result is known.
For ergodic systems, in relation with the Shnirelman theorem, the situation is widely open.
In any case, any new result establishing some concentration features for eigenfunctions could certainly help to analyze the analyticity properties of the switching function $\psi$.

\subsection{Several open problems}

In this section we briefly comment on some open problems and subjects for possible future research related with the contents of this paper.

\paragraph{On the Strong Conic Independence Property $\Hdeux$.} 
Our methods apply to a wide class of parabolic problems allowing a spectral decomposition. One of the key subtle issues is to verify the property $\Hdeux$. The way we have addressed this property is by combining the analyticity of the eigenfunctions (which allows to extend the dependence condition to the whole domain $\Omega$) with the boundary conditions. This applies to the Dirichlet-Laplacian. The same proof would apply for other elliptic equations with analytic coefficients. 

But the analysis of this issue is widely open in two directions. First, in what concerns elliptic equations with non-analytic coefficients, and, second, for other boundary conditions. For instance, the proof above does not work for the Neumann boundary conditions except in some particular cases (as in the square domain where the eigenfunctions are explicit in separated variables).

\paragraph{On the concentration of eigenfunctions.} Another spectral property that plays a key role in our analysis is the lower bounds of the form
\begin{equation}\label{deconcentration}
\int_E \phi_j(x)^2 \, dx \geq \frac{e^{-2C\sqrt{\mu_j}}}{C^2\vert E\vert} ,
\end{equation}
valid uniformly over measurable sets $E$, proved in \cite{AEWZ} and extending previous results in \cite{LebeauRobbiano} on open sets.

Extending the results of this paper to other parabolic models would require the extension of these concentration inequalities for the corresponding spectra: other boundary conditions, Stokes problem, fourth order problems, reaction-diffusion systems, etc.

Note also that one could also consider optimal placement problems for these inequalities. For instance, in view of \eqref{deconcentration}, it would be natural to analyze the problem of determining the measurable set $E$ in $\Omega$ of a given measure so that the constant in the exponential lower bound in (\ref{deconcentration}) is minimized. This would correspond, in some sense, to determining the set $E$ where the deconcentration is maximized, which would be a good candidate for being an optimal observation set of the whole dynamics. Note however that this problem involves the whole spectrum of the Laplacian, contrarily to the problem considered here for the heat equation in which the intrinsic strong dissipative effect makes the whole problem to be governed by a finite number of eigenfunctions.

\paragraph{Heat equations with lower order potentials.}
The results of this paper could be applied to heat equations with lower order terms, of the form
\begin{equation}\label{heatEq_intropot}
\partial_t y-\triangle y + p(x) y=0, \quad (t,x)\in (0,T)\times\Omega.
\end{equation}
The techniques of this paper allow to introduce, by the randomization procedure, the spectral observability criterion. The proof of the main results of this paper requires exponential lower bounds as in \cite{AEWZ} on the possible concentration of the eigenfunctions of the associated operator $- \triangle + p\,\mathrm{id}$ on measurable sets. The analysis of these concentration inequalities, and their dependence on the regularity of the potential $p=p(x)$ is, as far as we know, an open problem.

The topics considered in this paper are even more widely open in the case where the potential $p$ is also time-dependent, in particular because of the lack of the existence of a spectral basis to perform the Fourier decomposition leading to the spectral criterion that we have considered throughout.

\paragraph{Heat equations with convective potentials.} It would be also interesting to analyze these issues for heat equations with convective terms, of the form
\begin{equation}\label{heatEq_intropotcon}
\partial_t y-\triangle y + V(x) \cdot \nabla y=0, \quad (t,x)\in (0,T)\times\Omega.
\end{equation}
In dimension one, with a simple change of variables, the problem can be reduced to the form \eqref{heatEq_intropot}. Of course this is no longer true in the multi-dimensional case. The main difficulty is then that one cannot expand the solutions in a spectral basis. 
Note that null controllability and observability properties of these models has been analyzed in \cite{Leautaud} by means of resolvent estimates. But nothing is known about the optimal location of sensors or actuators.

\paragraph{Analysis of the full Gramian.} 
As explained in the paper, our analysis corresponds to focus on the diagonal terms of the Gramian operator \eqref{defGT}. The analysis of the full Gramian is a widely open problem. Note that, as pointed out in \cite{MZ1}, the off-diagonal terms in the Gramian operator play a fundamental role when dealing with the full observation/control problems. One cannot exclude that, when dealing with the full Gramian, the optimal domains be not fully determined by a finite number of Fourier modes.

\paragraph{Other fractional models.} 
In this paper we have considered the fractional Laplacian defined in spectral terms. In this way we have taken advantage of the properties that are well known for the spectrum of the Laplacian such as, for instance, $\Hdeux$ or \eqref{deconcentration}. Significant added developments would be needed to consider parabolic equations involving other versions of the fractional Laplacian, such as the non-local one in \cite{CS}. As far as we know, very little is known about the spectrum of this operator, except in the 1D case. The analogue of $\Hdeux$ and \eqref{deconcentration}, and of the results of this paper constitute interesting open problems for these alternative fractional models. The same can be said for models involving fractional derivatives in time (see \cite{KLW}).

\bigskip

\noindent{\bf Acknowledgment.}

We are indebted to Nicolas Burq for fruitful discussions.

The first author was partially supported by the ANR project OPTIFORM.

The third author was partially supported by the Grant MTM2011-29306-C02-00 of the MICINN (Spain), project PI2010-04 of the Basque Government, the ERC Advanced Grant FP7-246775 NUMERIWAVES, the ESF Research Networking Program OPTPDE.

This work was done while the third author was visiting the CIMI (Centre International de Mathématiques et Informatique) of Toulouse (France) in the frame of the CIMI Excellence Chair on ``Control, PDEs, Numerics and Applications".

\end{document}